\newcommand{\bs}[1]{\boldsymbol{#1}}
\newcommand{\abs}[1]{\left|#1\right|}
\newcommand{\prn}[1]{\left(#1\right)}
\newcommand{\scal}[2]{\langle #1,#2\rangle}
\newcommand{\re}{\mathbf R}
\newcommand{\rr}[1]{\mathbf R^{#1}}
\newcommand{\nn}[1]{\mathbf N^{#1}}
\newcommand{\nm}[2]{\Vert #1\Vert _{#2}}
\newcommand{\op}{\operatorname{Op}}
\newcommand{\sets}[2]{\{ \, #1\, ;\, #2\, \} }
\newcommand{\ep}{\varepsilon}
\newcommand{\cdo}{\, \cdot \, }
\newcommand{\vrum}{\vspace{0.1cm}}
\newcommand{\maclS}{\mathcal S}
\newcommand{\mascF}{\mathscr F}
\newcommand{\mascP}{\mathscr P}
\newcommand{\mascS}{\mathscr S}
\numberwithin{equation}{section}          
\newtheorem{thm}{Theorem}
\numberwithin{thm}{section}
\newcommand{\rubrik}{}
\newtheorem{prop}[thm]{Proposition}
\newtheorem{cor}[thm]{Corollary}
\newtheorem{lemma}[thm]{Lemma}
\theoremstyle{definition}
\newtheorem{defn}[thm]{Definition}
\theoremstyle{remark}
\newtheorem{rem}[thm]{Remark}              
\author{Ahmed Abdeljawad}
\address{Department of Mathematics,
	University of Turin, Italy}
\email{ahmed.abdeljawad@unito.it}
\author{Sandro Coriasco}
\address{Department of Mathematics,
	University of Turin, Italy}
\email{sandro.coriasco@unito.it}
\author{Nenad Teofanov}
\address{Department of Mathematics and Informatics,
	University of Novi Sad, Serbia}
\email{nenad.teofanov@dmi.uns.ac.rs}
\title{\textbf{Bilinear pseudo-differential operators
		with Gevrey-H{\"o}rmander symbols}}
\begin{document}
	
\subjclass[2010]{35S05, 47B37, 47G30, 42B35}

\keywords{Bilinear operator, Pseudo-differential operators,
	Modulation spaces, Gelfand-Shilov spaces, Gevrey regularity}

\begin{abstract}
We consider bilinear pseudo-differential operators who\-se symbols posses Gevrey type regularity
and may have a sub-ex\-po\-nen\-tial growth at infinity, together with all
their derivatives. It is proved that those symbol classes can be described by
the means of the short-time Fourier transform and modulation spaces.
Our first main result is the invariance property of the corresponding bilinear operators.
Furthermore we prove the continuity of such operators when acting on modulation spaces.
As a consequence, we derive their continuity on anisotropic Gelfand-Shilov type spaces.
We consider both Beurling and Roumieu type symbol classes and Gelfand-Shilov spaces.
\end{abstract}

\maketitle

\section*{Introduction}\label{sec0}

\par

The study of multilinear operators has been influenced by the Calde\-r\'on-Zygmund theory. Indeed, one of the achievements of Coifman-Meyer's pioneer work \cite{CoMe}
is the realization of pseudo-differential operators in terms of singular integrals of Calder\'on-Zygmund type. Their approach is based on a multilinear point of view and have had a far reaching impact in operator theory and partial differential equations. For example, boundedness of a class of translation invariant bilinear operators on Lebesgue spaces is proved in  \cite{CoMe}. Furthermore, the bilinear Calder\'on-Zygmund theory
developed by Grafakos and Torres \cite{GT} paved the way to the extension of those results to  bilinear pseudo-differential operators which are non-translation invariant, i.e. whose
symbols may depend on the space variable as well. 
We refer to \cite{BO} for a brief survey and  discussion of applications to  partial differential equations, and to \cite{BeMaNaTo} for a systematic study of  bilinear pseudo-differential operators with symbols in bilinear
H\"ormander classes. See also \cite{KT} for a recent  contribution in the context of Triebel-Lizorkin and local Hardy spaces.

Another type of results concerns bilinear (and multilinear) operators whose symbols are not necessarily smooth.
Their continuity properties on modulation spaces were first observed in \cite{BKasso2004}.
In contrast to  classical bilinear pseudo-differential operators considered in e.g. \cite{CoMe},
these operators are treated by the techniques of time-frequency analysis,
see also \cite{BKasso2006, BeGrHeOk, BeMaNaTo, CKasso, MOPf}.

In this paper, we employ the techniques of time-frequency analysis and modulation spaces, and
consider bilinear pseudo-differential operators
of Gevrey-H{\"o}rmander type whose symbols are of infinite order and may have a (super-)exponential growth at infinity, together with all
their derivatives. The linear counterpart of such operators is considered in \cite{CaTo}, within the environment of isotropic Gelfand-Shilov spaces of functions and distributions, see also
\cite{AbCoTo, To25}, and extended to the anisotropic setting in \cite{AbCaTo, AbTo}.
The main purpose of this paper is to extend some boundedness results given there to the bilinear case.

More precisely, we consider Gevrey-H{\"o}rmander type symbols $a \in
{\Gamma _{(\omega)}^{\sigma,\mathbf{s}}} (\rr {3d})$ (or $ a\in {\Gamma _{(\omega)}^{\sigma,\mathbf{s},0}} (\rr {3d})$),
see Definition \ref{Def:GammaSymb2}, and the corresponding  pseudo-differential  operators, denoted by
$\op_{r, t}(a)$, see \eqref{eq:BilPdo} below. When $r=t=0$ we recover the Kohn-Nirenberg correspondence considered, e.g., in \cite{BKasso2004,BeGrHeOk}, while for $r=t=1/2$ we obtain the Weyl correspondence, considered in \cite{Te5,Te7}.
We remark that, in view of this choice of symbol classes, we cannot rely on arguments based on standard
(e.g., Littlewood-Paley) localization techniques. The substitute for this is, from the very beginning, a ``global approach'',
aimed at obtaining and employing appropriate characterizations of the involved objects,
in terms of suitable estimates which hold true on the whole Euclidean spaces.

The paper is organized as follows. In Section \ref{sec1} we collect necessary definitions, background material and basic facts
on Gelfand-Shilov spaces, weight functions, modulation spaces, symbol classes and the corresponding bilinear pseudo-differential operators. In Section 
\ref{sec2}, we first study exponential-type operators on Gelfand-Shilov space 
and prove the corresponding invariance properties.
We proceed with a characterization of the symbol spaces in terms of their regularity and decay properties,
and suitable estimates related to modulation spaces. Finally, we prove  our main
results about the continuity of bilinear operators in Section \ref{sec3}. The proofs of a few technical results are collected in the Appendix.

\stepcounter{section}
\section{Preliminaries}\label{sec1}

\par

In this section we provide notation and background material which will be used throughout the paper. Proofs and details are in general omitted, since they can be found, e.g., in \cite{CoPiRoTe05,Fe2,Fe3,Fe4,FG1,FG2,FG4,Gro,Te5,To2,To7,To8}.

\par

We use the standard notation for Euclidean spaces and multiindeces, cf. \cite{Ho1}. For example,
if $x=(x_{1},...,x_{d})$ $\in\rr d$ and
$\alpha=(\alpha_{1},...,\alpha _{d})\in\nn d$, then $x^{\alpha}=x_{1}^{\alpha_{1}}\dots x_{d}^{\alpha_{d}}$,
$\partial_{x}^{\alpha}=\partial_{x_{1}}^{\alpha_{1}} \dots\partial_{x_{d}}^{\alpha_{d}}$,
$|\alpha|=\alpha_{1}+\dots+\alpha_{d}$ and $\alpha!=\alpha_{1}!\dots\alpha_{d}!$. Here $ \mathbf{N} $ denotes the set of non-negative integers.
If $\alpha\in\nn d$, then $\alpha>0$ means that $\alpha_{j} > 0$ for every $j=1,\dots, d,$ and similarly for
$\alpha \geq 0$. We write $A(\theta )\lesssim B(\theta )$, $\theta \in \Omega$,
if there is a constant $c>0$ such that $|A(\theta )|\le c|B(\theta )|$
for all $\theta \in \Omega$. We write $A(\theta)\asymp B(\theta)$
if $A(\theta )\lesssim B(\theta )$ and $B(\theta )\lesssim A(\theta )$
for all $\theta \in \Omega$. Here $\Omega$ is an open subset of $\rr N$.
If $\mathcal{B}_1$ and $\mathcal{B}_2$ are  topological vector spaces, then $ \mathcal{B}_1 \hookrightarrow \mathcal{B}_2$
means that $\mathcal{B}_1$ is  continuously embedded into $\mathcal{B}_2$.
By $\mathscr S (\rr d)$ we denote the Schwartz space of rapidly decreasing functions, and  $\mathscr S'(\rr d)$ denotes its dual space of tempered distributions.

\par

\subsection{Gelfand-Shilov spaces} \label{subsec1.2}

Let $h,s,\sigma > 0$ be fixed. Then $\maclS _{s;h}^{\sigma}(\rr d)$
is the Banach space of all $f\in C^\infty (\rr d)$ such that
\begin{equation}\label{gfseminorm}
\nm f{\maclS _{s;h}^{\sigma}}\equiv \sup_{\alpha ,\beta \in
	\mathbf N^d} \sup_{x \in \rr d} \frac {|x^\alpha \partial ^\beta
	f(x)|}{h^{|\alpha  + \beta |} \alpha !^s\, \beta !^\sigma}<\infty,
\end{equation}
endowed with the norm \eqref{gfseminorm}.
Obviously,  $\maclS _{s;h}^{\sigma}(\rr d)$ increases as $h$, $s$  and $\sigma$
increase, and it is contained in $\mathscr S (\rr d)$ for every $h,s,\sigma > 0$.

\par

The {Gelfand-Shilov spaces} $\maclS _{s}^{\sigma}(\rr d)$ and
$\Sigma _{s}^{\sigma}(\rr d)$ are defined as the inductive and projective
limits respectively of $\maclS _{s;h}^{\sigma}(\rr d)$, i.e.
\begin{equation}\label{GSspacecond1}
\maclS _{s}^{\sigma}(\rr d) = \bigcup _{h>0}\maclS _{s;h}^{\sigma}(\rr d)
\quad \text{and}\quad \Sigma _{s}^{\sigma}(\rr d) =\bigcap _{h>0}
\maclS _{s;h}^{\sigma}(\rr d),
\end{equation}
with the usual inductive and projective limit topologies. Note that
$\Sigma _{s}^{\sigma}(\rr d)\neq \{ 0\}$, if and only if
$s+\sigma \ge 1$ and $(s,\sigma )\neq
(\frac 12,\frac 12)$, and
$\maclS _{s}^{\sigma}(\rr d)\neq \{ 0\}$, if and only
if $s+\sigma \ge 1$, see \cite{GS, Pi88}.
For every $s,\sigma >0$ we have
\begin{equation}\label{Eq:GSEmbeddings}
\Sigma _s^\sigma (\rr d)
\hookrightarrow
\maclS _s^\sigma (\rr d)
\hookrightarrow
\Sigma _{s+\ep}^{\sigma +\ep}(\rr d)
\hookrightarrow
\mascS (\rr d)
\end{equation}
for every $\ep >0$. If $s+\sigma \ge 1$, then
the last two inclusions in \eqref{Eq:GSEmbeddings} are dense,
and if in addition $(s,\sigma )\neq (\frac 12,\frac 12)$
then the
first inclusion in \eqref{Eq:GSEmbeddings} is dense.
Moreover, for $\sigma<1$ the elements of $\maclS _{s}^{\sigma}(\rr d)$ admit entire
extensions to $\mathbf{C}^d$ satisfying suitable exponential bounds, \cite{GS}.

\par

\begin{rem}
Note that the original definition in \cite{GS} is given with   $s,\sigma \ge 0$. Then
$\maclS _{0}^{\sigma}(\rr d)\neq \{ 0\}$ ($\Sigma _{0}^{\sigma}(\rr d)\neq \{ 0\}$) and $\maclS _{s}^{0}(\rr d)\neq \{ 0\}$
($\Sigma _{s}^{0}(\rr d)\neq \{ 0\}$), if and only if
$\sigma > 1$ and $s>1$ respectively, see \cite[Chapter IV 8.1]{GS}.
In fact, $\maclS  _{0}^{\sigma}(\rr d)\neq \{ 0\}$ consists of compactly supported Gevrey functions, while
$\maclS _{s}^{0}(\rr d)\neq \{ 0\}$ contains functions whose Fourier transforms are compactly  supported Gevrey functions.
\end{rem}

\par

The spaces $\maclS _{s}^{\sigma}(\rr d)$ and $\Sigma _{s}^{\sigma}(\rr d)$
combine  global regularity with suitable decay properties at infinity, thus offering an abstract  functional
analytic framework for some problems in mathematical physics, \cite{Gram, NR}.
The following result is a well-known characterization of $\maclS _{s}^{\sigma}(\rr d)$ and $\Sigma _{s}^{\sigma}(\rr d)$
in terms of the exponential decay of derivatives of their elements.
Although the proof is standard, it contains some tools relevant for our investigations, and is therefore included in Appendix \ref{appendix}.

\begin{lemma} \label{GSlemma}
Let $f $ be a smooth function on $\rr d$, $f\in C^{\infty}(\rr d).$ Then $f \in \maclS _{s}^{\sigma}(\rr d)$
(respectively $f \in \Sigma _{s}^{\sigma}(\rr d)$)
if and only if for every $\alpha \in \nn d$
\begin{equation} \label{GSspaces}
|\partial^\alpha f(x)| \lesssim l^{|\alpha|} \alpha!^\sigma  e^{-h|x|^{\frac 1s}}, \;\;\; x \in \rr d,
\end{equation}
for some $l, h>0$ (respectively for every $l,h>0$).
\end{lemma}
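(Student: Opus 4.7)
The plan is to establish both implications by translating between the defining seminorm bound
$$|x^\alpha \partial^\beta f(x)| \lesssim h^{|\alpha|+|\beta|} (\alpha!)^s (\beta!)^\sigma$$
(which is a reformulation of finiteness of $\nm{f}{\maclS_{s;h}^\sigma}$) and the exponential-decay estimate \eqref{GSspaces}, and then to observe that the quantifier structure (``for some $l,h$'' versus ``for every $l,h$'') converts directly between the Roumieu case $\maclS_s^\sigma(\rr d)$ and the Beurling case $\Sigma_s^\sigma(\rr d)$.

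For the implication \eqref{GSspaces} $\Rightarrow f\in\maclS_s^\sigma(\rr d)$, I would assume $|\partial^\alpha f(x)| \leq C l^{|\alpha|} (\alpha!)^\sigma e^{-h|x|^{1/s}}$ and estimate
$$|x^\beta \partial^\alpha f(x)| \leq C l^{|\alpha|} (\alpha!)^\sigma \sup_{r \geq 0} r^{|\beta|} e^{-h r^{1/s}}.$$
The substitution $t = r^{1/s}$ reduces the supremum to $\sup_{t\geq 0} t^{s|\beta|} e^{-ht} = (s|\beta|/h)^{s|\beta|} e^{-s|\beta|}$. Using Stirling's inequality $n^n \leq e^n\, n!$ and the multinomial bound $|\beta|!\leq d^{|\beta|}\beta!$, this quantity is bounded by $C_1^{|\beta|} (\beta!)^s$ with $C_1 = d(s/h)^s$. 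Hence the seminorm is finite with parameter $h_0 = \max(C_1,l)$, so $f\in\maclS_{s;h_0}^\sigma(\rr d)$. In the Beurling case $l$ is arbitrarily small and $h$ arbitrarily large, which makes $h_0$ arbitrarily small, giving $f\in\Sigma_s^\sigma(\rr d)$.

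For the converse, I would fix $\alpha\in\nn d$ and, for each coordinate $j$ and every $N\in\mathbf N$, plug the multiindex $2Ne_j$ into the seminorm bound to get $x_j^{2N}|\partial^\alpha f(x)| \leq C h^{2N+|\alpha|} ((2N)!)^s (\alpha!)^\sigma$. Combining the $d$ coordinate estimates through $|x|^{2N} \leq d^N \sum_j x_j^{2N}$ produces
$$|x|^{2N} |\partial^\alpha f(x)| \leq C_2 (dh^2)^N ((2N)!)^s h^{|\alpha|} (\alpha!)^\sigma,$$
which, for $x\neq 0$, gives $|\partial^\alpha f(x)| \leq C_2 h^{|\alpha|}(\alpha!)^\sigma (dh^2)^N ((2N)!)^s |x|^{-2N}$. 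The plan is then to optimize $N$ as a function of $|x|$, choosing $N \approx \kappa |x|^{1/s}$ with $\kappa$ sufficiently small, apply Stirling's formula to $((2N)!)^s$, and check that the resulting bound assumes the form $l^{|\alpha|}(\alpha!)^\sigma e^{-h_0 |x|^{1/s}}$ for suitable $l,h_0>0$.

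The main obstacle is this last optimization: both $\kappa$ and the resulting decay rate $h_0$ depend on $h$, and tracking this dependence carefully is exactly what separates the two cases. In the Beurling setting the seminorm bound is available for every $h>0$, so after optimization we recover \eqref{GSspaces} for every $l,h_0>0$; in the Roumieu setting only a single $h_0$ survives. A compact alternative that bypasses explicit optimization is to sum the moment bounds as
$$\sum_{N\geq 0}\frac{(|x|^{1/s}/M)^{2Ns}}{((2N)!)^s}\, |\partial^\alpha f(x)| \leq C_3 h^{|\alpha|}(\alpha!)^\sigma,$$
with $M$ chosen sufficiently large, and to recognize the left-hand series as a Mittag-Leffler-type majorant of $e^{c|x|^{1/s}}|\partial^\alpha f(x)|$ for an appropriate $c>0$, from which \eqref{GSspaces} is immediate.
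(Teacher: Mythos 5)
Your proposal is correct; both implications go through as sketched, and the quantifier bookkeeping for the Roumieu versus Beurling cases is handled properly. The underlying idea coincides with the paper's: one converts between the factorial moment bounds $|x^\beta\partial^\alpha f(x)|\lesssim h^{|\alpha|+|\beta|}\beta!^s\alpha!^\sigma$ and the decay estimate \eqref{GSspaces} by comparing $\sup_{p}|x|^{p}/(k^{p}p!^{s})$ with $e^{c|x|^{1/s}}$. The execution differs, though. The paper (reducing to $d=1$) raises the normalized quantity $|x|^{p}\,|f^{(q)}(x)|/(k^{p}p!^{s}l^{q}q!^{\sigma})$ to the power $1/s$ and then sums over $p$, recognizing the resulting series as exactly $e^{(h/s)|x|^{1/s}}$ times the normalized derivative; it uses this power-series identity in \emph{both} directions, with the auxiliary choice $k=(s/2h)^{s}$ absorbing the dependence of constants. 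You instead work directly in $\rr d$ and use pointwise Laplace-type optimization --- the supremum of $t^{s|\beta|}e^{-ht}$ in one direction, the infimum over $N$ of $(dh^{2})^{N}((2N)!)^{s}|x|^{-2N}$ in the other --- combined with Stirling's inequality $n^{n}\le e^{n}n!$ and $|\beta|!\le d^{|\beta|}\beta!$; your alternative series argument at the end of the converse is essentially the paper's argument. The optimization route has the advantage of making completely explicit how $l$ and $h$ in \eqref{GSspaces} transform into the seminorm parameter and back, which, as you correctly flag, is precisely what separates the two cases; the series route is shorter but hides that bookkeeping in the choice of $k$. Either way the statement follows, so this is a sound, if slightly more computational, variant of the paper's proof.
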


\medspace

The \emph{Gelfand-Shilov distribution spaces} $(\maclS _{s}^{\sigma})'(\rr d)$
and $(\Sigma _{s}^{\sigma})'(\rr d)$ are the projective and inductive limit
respectively of $(\maclS _{s;h}^{\sigma})'(\rr d)$:
\begin{equation}\tag*{(\ref{GSspacecond1})$'$}
(\maclS _{s}^{\sigma})'(\rr d) = \bigcap _{h>0}(\maclS _{s;h}^{\sigma})'(\rr d)\quad
\text{and}\quad (\Sigma _{s}^{\sigma})'(\rr d) =\bigcup _{h>0}(\maclS _{s;h}^{\sigma})'(\rr d).
\end{equation}
It follows that $\mascS '(\rr d)\hookrightarrow
(\maclS _s^\sigma)'(\rr d)$ when $s+\sigma \ge 1$, and if in addition
$(s,\sigma )\neq (\frac 12,\frac 12)$, then $(\maclS _s^\sigma)'(\rr d)
\hookrightarrow (\Sigma _s^\sigma)'(\rr d)$.

\par

The Fourier transform $\mathscr F$ is the linear and continuous map on $\mascS (\rr d)$,
given by the formula
$$
(\mathscr Ff)(\xi )= \widehat f(\xi ) \equiv (2\pi )^{-\frac d2}\int _{\rr
	{d}} f(x)e^{-i\scal  x\xi }\, dx, \;\;\; \xi \in \rr d,
$$
when $f\in \mascS (\rr d)$. Here $\scal \cdo \cdo$ denotes the usual
scalar product on $\rr d$.
The Fourier transform extends  uniquely to homeomorphisms
from $(\maclS _{s}^{\sigma})'(\rr d)$ to $(\maclS _{\sigma}^{s})'(\rr d)$,
and from  $(\Sigma _{s}^{\sigma})'(\rr d)$ to $(\Sigma _{\sigma}^{s})'(\rr d)$.
Furthermore, it restricts to homeomorphisms from
$\maclS _{s}^{\sigma}(\rr d)$ to $\maclS _{\sigma}^{s}(\rr d)$,
and from  $\Sigma _{s}^{\sigma}(\rr d)$ to $\Sigma _{\sigma}^{s}(\rr d)$.

\par

Next  we rewrite  the definition of Gelfand-Shilov spaces in the
notation which is convenient for our analysis, see also \cite{CaTo, GS, GZ}. We put
$$
\rr {d_0+\dots+d_k} = \rr {d_0} \times \rr {d_1} \times \dots \times \rr {d_k} = \rr {(d_0,\dots,d_k)} = \rr {\mathbf{d}}.
$$

\par

\begin{defn}
Let $k \in \mathbf N$, $ \mathbf{\sigma} = (\sigma_0,\dots,\sigma_k) >0$, $ \mathbf{s} = (s_0,\dots,s_k)>0$,
and $ \mathbf{d} = d_0 + \dots + d_k $.
The Gelfand-Shilov spaces
\begin{alignat*}{3}
&\maclS _\mathbf{s}  ^\mathbf{\sigma} (\rr {\mathbf{d}}) =  \maclS _{s_0,\dots,s_k}^{\sigma_0,\dots,\sigma_k}
(\rr {d_0+\dots+d_k}) &
&\quad \text{and} \quad &
&\Sigma _\mathbf{s}  ^\mathbf{\sigma} (\rr {\mathbf{d}}) =  \Sigma _{s_0,\dots,s_k}^{\sigma_0,\dots,\sigma_k}
(\rr {d_0+\dots+d_k}),
\end{alignat*}
consist of all
$F\in C^\infty (\rr {d_0+\dots+d_k})$ such that
$$
|x_0^{\alpha_0 }\dots x_k^{\alpha_k}\partial _{x_0}^{\beta _0}\dots
\partial _{x_k}^{\beta _k}F(x_0,\dots,x_k)|
\lesssim
h^{|\alpha _0+\beta _0+\dots+\alpha _k+\beta _k|}
\prod_{j=0}^{k}\alpha_j!^{s_j}\beta_j!^{\sigma_j}
$$
for some $h>0$ and  for every $h>0$ respectively, where $x_j\in \rr{d_j}$,
$\alpha_j,\beta_j\in \mathbf N^{d_j}$,
$j=0,\dots,k$.
The dual spaces of
$ \maclS _\mathbf{s}  ^\mathbf{\sigma} (\rr {\mathbf{d}}) $ and
$\Sigma _\mathbf{s}  ^\mathbf{\sigma} (\rr {\mathbf{d}})$
are denoted by
$$
(\maclS _\mathbf{s}  ^\mathbf{\sigma})' (\rr {\mathbf{d}})
=
(\maclS _{s_0,\dots,s_k}^{\sigma_0,\dots,\sigma_k})'
(\rr {d_0+\dots+d_k})
$$
and
$$
(\Sigma _{s_0,\dots,s_k}^{\sigma_0,\dots,\sigma_k})' (\rr {\mathbf{d}}) = (\Sigma _{s_0,\dots,s_k}^{\sigma_0,\dots,\sigma_k})'
(\rr{d_0+\dots+d_k})
$$
respectively.
\end{defn}

The space $ \maclS _\mathbf{s}  ^\mathbf{\sigma} (\rr {\mathbf{d}})$ is nontrivial if and only if $s_j + \sigma_j \geq 1$,
for each $ j = 0,\dots, k$ and
$ \Sigma _\mathbf{s}  ^\mathbf{\sigma} (\rr {\mathbf{d}})$ is nontrivial if and only if $s_j + \sigma_j \geq 1$,
and $(s_j,\sigma_j) \neq ( \frac{1}{2}, \frac{1}{2}) $ for each $ j = 0,\dots, k$.

\par

Obviously, if $\sigma_j=\sigma$, $s_j=s$ and $d_j=d$,
$j=0,\dots, k$, then
$$
\maclS_{s_0,\dots,s_k}^{\sigma_0,\dots,\sigma_k}
(\rr {d_0+\dots+d_k})
\equiv
\maclS _s^\sigma(\rr{(k+1)d})
,\quad
\Sigma_{s_0,\dots,s_k}^{\sigma_0,\dots,\sigma_k}
(\rr {d_0+\dots+d_k})
\equiv
\Sigma _s ^\sigma(\rr{(k+1)d}),
$$
$$
(\maclS _{s_0,\dots,s_k}^{\sigma_0,\dots,\sigma_k})'
(\rr {d_0+\dots+d_k})
\equiv
(\maclS _s^\sigma)'(\rr{(k+1)d}),
$$
and
$$
( \Sigma _{s_0,\dots,s_k}^{\sigma_0,\dots,\sigma_k})'
(\rr {d_0+\dots+d_k})
\equiv
(\Sigma _s ^\sigma)'(\rr{(k+1)d}).
$$

\par

The  Fourier transform is a homeomorphism between
$ \maclS _s ^\sigma(\rr{d})$ and $ \maclS ^s _\sigma(\rr{d})$
(and between $ \Sigma _s ^\sigma(\rr{d})$ and  $ \Sigma ^s _\sigma(\rr{d})$), cf. \cite{GS}. This, together with the kernel theorem for
Gelfand-Shilov spaces (see \cite{LCPT, Prang, Te3} implies
the following mapping properties
of partial Fourier transforms on
Gelfand-Shilov spaces. The proof is therefore omitted.
Here, $\mascF _jF$ is the partial Fourier transforms
of $F(x_0,x_1,\dots,x_{k})$
with respect to $x_j\in \rr {d_j}$, $j=0,\dots,k$.

\par

\begin{prop}\label{propBroadGSSpaceChar}
Let $k\in \mathbf N$, $s_j,\sigma _j>0$, $j=0,\dots,k$.
Then the following is true:
\begin{enumerate}
\item the mapping $\mascF _j$  on $\mascS (\rr {d_0+\dots+d_k})$
restrict to homeomorphism
\begin{align*}
\mascF _j \, &: \, \maclS  _{s_0,\dots,s_k}^{\sigma_0,\dots,\sigma_k}
(\rr {d_0+\dots+d_k})
\to
\maclS  _{s _0,\dots,s_{j-1},\sigma_j,s_{j+1},\dots,s_k}
^{\sigma _0,\dots,\sigma_{j-1},s_j,\sigma_{j+1},\dots,\sigma_k}
(\rr {d_0+\dots+d_k});
\end{align*}

\vrum

\item the mapping $\mascF _j$  on $\mascS (\rr {d_0+\dots+d_k})$
is uniquely extendable to homeomorphism
\begin{align*}
\mascF _j \, &: \, (\maclS  _{s_0,\dots,s_k}^{\sigma_0,\dots,\sigma_k})'
(\rr {d_0+\dots+d_k}) \to
(\maclS  _{s _0,\dots,s_{j-1},\sigma_j,s_{j+1},\dots,s_k}
^{\sigma _0,\dots,\sigma_{j-1},s_j,\sigma_{j+1},\dots,\sigma_k})'
(\rr {d_0+\dots+d_k}).
\end{align*}
\end{enumerate}
	
\par
	
The same holds true if the $\maclS _{s_0,\dots,s_k}^{\sigma_0,\dots,\sigma_k}$-spaces and their duals
are replaced by corresponding
$\Sigma _{s_0,\dots,s_k}^{\sigma_0,\dots,\sigma_k}$
-spaces and their duals in each occurrence.
\end{prop}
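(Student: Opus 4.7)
The plan is to reduce the statement about partial Fourier transforms to the already known mapping property of the full Fourier transform on a single Gelfand-Shilov space, using the kernel theorem as the bridge between the multi-index setting and a tensor product of single-index spaces.

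First I would invoke the standard fact, recalled in the discussion preceding the proposition, that $\mascF : \maclS _s^\sigma (\rr d) \to \maclS _\sigma ^s(\rr d)$ is a homeomorphism, and the analogous statement for $\Sigma _s^\sigma$ and for the dual spaces. The kernel theorem for Gelfand-Shilov spaces, cited in the text via \cite{LCPT, Prang, Te3}, identifies $\maclS _{s_0,\dots,s_k}^{\sigma _0,\dots,\sigma _k}(\rr {d_0+\dots+d_k})$ with the completed topological tensor product $\maclS _{s_0}^{\sigma _0}(\rr {d_0}) \, \widehat{\otimes}\, \cdots \, \widehat{\otimes}\, \maclS _{s_k}^{\sigma _k}(\rr {d_k})$, and analogously in the $\Sigma$-variant. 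Under this identification, the partial Fourier transform $\mascF _j$ acts as $I \otimes \cdots \otimes \mascF \otimes \cdots \otimes I$, with the full Fourier transform sitting in the $j$-th slot and identities elsewhere. Since each identity is trivially a homeomorphism and the $j$-th factor is mapped by $\mascF$ onto $\maclS _{\sigma _j}^{s_j}(\rr {d_j})$, the resulting tensor-product map is a homeomorphism between the corresponding completed tensor products, which, upon applying the kernel theorem in the target space, yields exactly the claim in part (1).

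For part (2) I would proceed by transposition. For $F, G \in \mascS (\rr {d_0+\dots+d_k})$ one has $\langle \mascF _j F, G\rangle = \langle F, \mascF _j G\rangle$, so part (1) applied to the test-function side uniquely extends $\mascF _j$ to the corresponding Gelfand-Shilov distribution spaces, with the indices exchanged as prescribed. Bicontinuity of this extension is automatic from the continuity of its transpose. The $\Sigma$-case is handled identically, replacing $\maclS$ with $\Sigma$ at each step, noting that the nontriviality condition $(s_j,\sigma _j) \neq (\tfrac 12,\tfrac 12)$ carries over between the source and target since the index exchange preserves the sum $s_j + \sigma _j$ and the symmetry of the excluded point.

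The main technical ingredient is the kernel theorem for Gelfand-Shilov spaces, which handles the passage from the tensor product of single-index spaces to the multi-index space with matching topology. Since this is available from the cited literature as a black box, the remainder of the argument is a routine verification; this is why the authors omit it.
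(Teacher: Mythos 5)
Your argument is exactly the one the paper intends: the authors state that the proposition follows from the homeomorphism property of the full Fourier transform on $\maclS _s^\sigma(\rr d)$ (resp. $\Sigma _s^\sigma(\rr d)$) together with the kernel theorem for Gelfand-Shilov spaces, and omit the details. Your tensor-product factorization of $\mascF _j$ and the duality step for part (2) fill in those details in the same spirit, so the proposal is correct and follows the paper's approach.
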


\par

The result analogous to Proposition \ref{propBroadGSSpaceChar} holds for
partial Fourier transforms with respect to some choice of variables. In particular the
(full) Fourier transform on $\mascS (\rr {\mathbf{d}})$ restricts to
to homeomorphism
\begin{align*}
\mascF  \, &: \, \maclS  _{\mathbf{s}}^{\mathbf{\sigma}}
(\rr {\mathbf{d}})
\to
\maclS  ^{\mathbf{s}} _{\mathbf{\sigma}}
(\rr {\mathbf{d}});
\end{align*}
and is uniquely extendable to homeomorphism
\begin{align*}
\mascF  \, &: \, (\maclS  _{\mathbf{s}}^{\mathbf{\sigma}}
)'
(\rr {\mathbf{d}}) \to
(\maclS  ^{\mathbf{s}} _{\mathbf{\sigma}}
)'
(\rr {\mathbf{d}}),
\end{align*}
and same holds true if $\maclS _{\mathbf{s}}^{\mathbf{\sigma}}
(\rr {\mathbf{d}})$ spaces and their duals
are replaced by corresponding $\Sigma _{\mathbf{s}}^{\mathbf{\sigma}}
(\rr {\mathbf{d}})$ spaces and their duals.

Alternatively, this result is contained in the following Proposition.

\begin{prop}\label{prop:GScharac}
Let $k\in \mathbf N$, $ \mathbf{\sigma} = (\sigma_0,\dots,\sigma_k) >0$, $ \mathbf{s} = (s_0,\dots,s_k)>0$,
and $ \mathbf{d} = d_1 + \dots + d_k $.
Then the following
conditions are equivalent.
\begin{enumerate}
\item $F\in \maclS  _{\mathbf{s}}^{\mathbf{\sigma}}
(\rr {\mathbf{d}}) $\quad
($F\in\Sigma _{\mathbf{s}}^{\mathbf{\sigma}}
(\rr {\mathbf{d}})$);

\vrum

\item for some $r>0$ (for every $r>0$) it holds
\begin{align*}
\displaystyle{|F(x_0,\dots,x_k)|\lesssim
	e^{-r\left(|x_0|^{\frac 1{s_0}}+\dots+|x_k|^{\frac 1{s_k}}
		\right)}}
\intertext{and}
\displaystyle{|\widehat F(\xi _0,\dots,\xi _k)|
	\lesssim
	e^{-r\left(|\xi _0|^{\frac 1{\sigma _0}}+\dots
		+ |\xi _k|^{\frac 1{\sigma _k}} \right)}}.
\end{align*}

\vrum

\item for every $\alpha = (\alpha_0, \dots, \alpha_k) \in \nn {\mathbf{d}}$ and for some $h,r>0$ (for every $h,r>0$) it holds
$$
|\partial ^{\alpha} F(x_0,\dots,x_k)|\lesssim
h ^{|\alpha|}	\prod_{j=0} ^k \alpha_j ^{\sigma_j}
e^{-r\left(|x_0|^{\frac 1{s_0}}+\dots+|x_k|^{\frac 1{s_k}}
		\right)}.
$$
\end{enumerate}
\end{prop}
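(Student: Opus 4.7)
The plan is to prove the three conditions equivalent by closing the cycle $(1)\Rightarrow(3)\Rightarrow(2)\Rightarrow(1)$, handling the Beurling and Roumieu variants in parallel by tracking the quantifiers on $h$ and $r$. The argument rests on two devices: a block-wise Stirling-type interpolation between polynomials and exponentials, already used in the proof of Lemma \ref{GSlemma} in the Appendix, and the identification of multiplication by $\xi_j^{\beta_j}$ with partial differentiation $\partial_{x_j}^{\beta_j}$ under the Fourier transform recorded in Proposition \ref{propBroadGSSpaceChar}.

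For $(1)\Rightarrow(3)$, I would fix $\beta$ in the defining seminorm bound
$$
|x^{\alpha}\partial^{\beta}F(x_0,\dots,x_k)|\lesssim h^{|\alpha+\beta|}\prod_{j=0}^{k}\alpha_j!^{s_j}\beta_j!^{\sigma_j}
$$
and, independently in each block $j$, optimize the polynomial factor $|x_j|^{|\alpha_j|}$ against the Gevrey factor $\alpha_j!^{s_j}$ to produce an exponential $e^{-r|x_j|^{1/s_j}}$ via the standard inequality $\sup_{N\in\mathbf{N}}(|x_j|^N/(h^N N!^{s_j}))\asymp e^{c_h|x_j|^{1/s_j}}$. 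Taking products over $j$ yields (3). For $(3)\Rightarrow(2)$, setting $\alpha=0$ gives the decay of $F$ at once; for the decay of $\widehat{F}$, I would use the Fourier identity $\xi_j^{\beta_j}\widehat{F}(\xi)=\widehat{(-i\partial_{x_j})^{\beta_j}F}(\xi)$ block-wise, bound $|\xi^{\beta}\widehat{F}(\xi)|$ by $\|\partial^{\beta}F\|_{L^1}$, which by (3) is $\lesssim h^{|\beta|}\prod_j\beta_j!^{\sigma_j}$ (the exponential weight makes the integral converge harmlessly), and then run the same Stirling optimization in $\beta$, now with exponent $1/\sigma_j$ in each block, to convert the polynomial bound on $\widehat{F}$ into the required exponential decay.

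The genuinely delicate step is $(2)\Rightarrow(1)$, since (2) provides only pointwise decay of $\widehat{F}$ whereas (1) controls $x^{\alpha}\partial^{\beta}F$. I would first upgrade (2) to derivative bounds on $\widehat{F}$: the identity $\partial_\xi^{\eta}\widehat{F}(\xi)=\widehat{(-ix)^{\eta}F}(\xi)$, combined with the decay of $F$ and the Stirling trick applied to each $\|x_j^{\eta_j}F\|_{L^1}$, gives $|\partial_\xi^{\eta}\widehat{F}(\xi)|\lesssim h^{|\eta|}\prod_j\eta_j!^{s_j}$ uniformly in $\xi$. Then by Fourier inversion and integration by parts,
$$
x^{\alpha}\partial^{\beta}F(x)=(2\pi)^{-|\mathbf{d}|/2}\int e^{i\langle x,\xi\rangle}\,\partial_\xi^{\alpha}\!\left[(i\xi)^{\beta}\widehat{F}(\xi)\right]d\xi ;
$$
expanding by Leibniz, using the derivative bounds on $\widehat{F}$ just obtained together with its exponential decay to make the integral converge, and collecting the combinatorial factors block by block produces the estimate $|x^{\alpha}\partial^{\beta}F(x)|\lesssim h^{|\alpha+\beta|}\prod_j\alpha_j!^{s_j}\beta_j!^{\sigma_j}$, which is precisely (1).

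The main obstacle is the combinatorial bookkeeping in the anisotropic setting. Every Stirling-type interpolation must be carried out separately in the pair $(x_j,\xi_j)$ with its own exponents $s_j$ and $\sigma_j$, without cross-contamination between blocks, and the constants $h$ and $r$ must propagate through the cycle in a way that cleanly distinguishes ``for some'' (the Roumieu case $\maclS_{\mathbf{s}}^{\mathbf{\sigma}}$) from ``for every'' (the Beurling case $\Sigma_{\mathbf{s}}^{\mathbf{\sigma}}$). Once this block-wise discipline is in place, each implication reduces to the one-variable estimates already present in the proof of Lemma \ref{GSlemma}.
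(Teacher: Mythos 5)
Your implications $(1)\Rightarrow(3)\Rightarrow(2)$ are correct and are exactly the block-wise Stirling optimizations that the paper has in mind when it says that $(1)\Leftrightarrow(3)$ follows by a slight modification of the proof of Lemma \ref{GSlemma}. Note, however, that the paper does not prove $(2)\Rightarrow(1)$ at all: it cites \cite{ChChKi96} for the equivalence of (1) and (2). You are attempting a self-contained proof of precisely the direction the paper outsources, and that is where your argument has a genuine gap.

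The problem is in the final step of $(2)\Rightarrow(1)$. After the Leibniz expansion of $\partial_\xi^{\alpha}\bigl[(i\xi)^{\beta}\widehat F(\xi)\bigr]$ you must integrate terms of the form $\xi^{\beta-\gamma}\,\partial_\xi^{\alpha-\gamma}\widehat F(\xi)$ over $\rr{\mathbf d}$. For $\gamma=\alpha$ the pointwise decay of $\widehat F$ from (2) makes this work, but for $\gamma<\alpha$ the only information you have produced about $\partial_\xi^{\alpha-\gamma}\widehat F$ is the \emph{uniform} bound $|\partial_\xi^{\eta}\widehat F(\xi)|\lesssim h^{|\eta|}\prod_j\eta_j!^{s_j}$ coming from $\|x^{\eta}F\|_{L^1}$. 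A uniform bound on a derivative of $\widehat F$ cannot be multiplied by the decay of $\widehat F$ itself; as stated, the integrals $\int|\xi^{\beta-\gamma}||\partial_\xi^{\alpha-\gamma}\widehat F(\xi)|\,d\xi$ are not even finite on the information given. What is needed is the statement that the \emph{derivatives} of $\widehat F$ inherit the sub-exponential decay, i.e. $|\partial_\xi^{\eta}\widehat F(\xi)|\lesssim h^{|\eta|}\prod_j\eta_j!^{s_j}\,e^{-r'\sum_j|\xi_j|^{1/\sigma_j}}$ for some smaller $r'$. Deducing this from ``$\widehat F$ decays'' plus ``all derivatives of $\widehat F$ are uniformly bounded with Gevrey constants'' requires an interpolation argument (a Landau--Kolmogorov type inequality, or a Taylor-expansion argument on small balls centred at $\xi$, which trades half of the exponent $r$ for control of the derivatives). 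This interpolation is the actual content of the Chung--Chung--Kim theorem the paper cites, and it is absent from your proposal; the same issue reappears if you instead try to prove (3) directly by showing that $\partial^{\beta}F$ decays. Once that lemma is supplied, the rest of your bookkeeping (block-wise exponents, Roumieu versus Beurling quantifiers) goes through as you describe.
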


\begin{proof}
The equivalence between (1) and (2)  follows from \cite{ChChKi96},
and (1) $ \Leftrightarrow $ (2) can be proved by a slight modification of the proof of Lemma \ref{GSlemma}
(cf. Appendix \ref{appendix}) and we therefore leave it for the reader.

\end{proof}

%
%

\subsection{Weight functions}\label{subsec1.1}

A function $\omega$ is called a \emph{weight}
or \emph{weight function}
on $\rr d$, if $\omega,1/\omega\in L^\infty _{loc}(\rr d)$
are positive everywhere. Without loss of generality we may assume that the weight functions are continuous on $\rr d$.
Let $\omega$ and $v$ be weights on $\rr d$.
Then $\omega$ is called \emph{$v$-moderate}
or \emph{moderate},
if
\begin{equation}\label{e1.1}
\omega (x_1+x_2)\lesssim \omega (x_1) v(x_2),\quad x_1,x_2\in \rr d .
\end{equation}

If $v$ can be chosen as polynomial, then $\omega$ is
called a weight of
polynomial type. A weight function $v$ is
\emph{submultiplicative}, if
it is symmetric in each coordinate and
$$
v (x_1+x_2)\lesssim v (x_1) v(x_2),\quad x_1,x_2\in \rr d .
$$
From now on, $v$ always denote a submultiplicative
weight if nothing else is stated.
In particular, if \eqref{e1.1} holds and $v$ is submultiplicative,
then
\begin{equation}\label{eq:2Next}
\frac {\omega (x_1)}{v(x_2)}
\lesssim \omega(x_1 + x_2) \lesssim \omega(x_1)v(x_2),
\quad x_1,x_2\in\rr{d}.
\end{equation}

\par

If $\omega$ is a moderate weight on $\rr d$, then there exists a submultiplicative weight
$v$ on $\rr d$ such that \eqref{e1.1} and \eqref{eq:2Next}
hold, cf. \cite{To8,To11,To18}.
Moreover if $v$ is submultiplicative on $\rr d$, then
\begin{equation}\label{Eq:CondSubWeights}
1\lesssim v(x) \lesssim e^{c|x|}
\end{equation}
for some constant $c>0$ (cf. \cite[Lemma 4.2]{Gc2.5}).

In particular, if $\omega$ is moderate,
then
$$
\omega (x+y)\lesssim \omega (x)e^{c|y|}
\quad \text{and}\quad
e^{-c|x|}\lesssim \omega (x)\lesssim e^{c|x|},\quad
x,y\in \rr d
$$
for some $c>0$.

\par

For a given $k\in \mathbf N$, we let $\mascP _E(\rr {d_0+\dots+d_k})$
be the set of all moderate weights on
$\rr {d_0+\dots+d_k}$, and $\mascP (\rr {d_0+\dots+d_k})$
be the subset of $\mascP  _E(\rr {d_0+\dots+d_k})$
which consists of weights of  polynomial type.

If $\omega \in \mascP _E(\rr {d_0+\dots+d_k})$
then there exists a submultiplicative weight $v$ on $\rr {d_0+\dots+d_k}$,
such that
\begin{equation}\label{eq:2Nextbis}
\frac {\omega (x_0, \dots,x_k)}{v(y_0, \dots,y_k)}
\lesssim \omega(x_0+y_0,  \dots,x_k+y_k)
\lesssim \omega(x_0, \dots, x_k)v(y_0, \dots,y_k),
\end{equation}
where  $x_j, y_j \in\rr{d_j}$, $j=0,\dots,k$.
Note that \eqref{Eq:CondSubWeights} for a submultiplicative weight $v$
on $\rr {d_0+\dots+d_k}$ becomes
\begin{equation}\label{eq:expEstonsubmultipicative}
1\lesssim v(x_0,\dots,x _k) \lesssim e^{r(|x_0|+\dots+|x _k|)},
\quad x_j\in \rr {d_j},\ j=0,\dots,k,
\end{equation}
for some $r>0$.

\par

Next we extend the weight functions considered in \cite{AbCaTo,AbCoTo} to the case  when
$\rr {d} = \rr {d_0+\dots+d_k}$.

\begin{defn} \label{classesofweights}
Let $k\in \mathbf N$ and  $s_j>0$, $j=0,\dots,k$. Then, the set $\mascP _{s_0,\dots,s_k}
(\rr {d_0+\dots+d_k})$
($\mascP _{s_0,\dots,s_k}^0(\rr{d_0+\dots+d_k})$) consists of
all weights $\omega\in \mascP _E(\rr {d_0+\dots+d_k})$ such that
\begin{equation}\label{eq:omegaEst}
\omega(x_0+y_0, \dots,x_k+y_k)
\lesssim \omega (x_0, \dots,x_k )
e^{r(|y_0|^{\frac 1{s_0}}+\dots+|y_k |^{\frac 1{s_k}})},\;
x_j,y_j\in \rr{d_j}
\end{equation}
holds for some (for every) $r>0$.
\end{defn}

In particular, if $\omega \in  \mascP _{s_0,\dots,s_k}
(\rr {d_0+\dots+d_k})$
($\mascP _{s_0,\dots,s_k}^0(\rr{d_0+\dots+d_k})$), then
$$
e^{-r(|x_0|^{\frac 1{s_0}}+\dots+|x_k |^{\frac 1{s_k}})}
\lesssim
\omega (x_0, \dots,x_k )
\lesssim
e^{r(|x_0|^{\frac 1{s_0}} +\dots+|x_k |^{\frac 1{s_k}})}
$$
for some $r>0$ (for every $r>0$).

By \eqref{eq:2Nextbis} and \eqref{eq:expEstonsubmultipicative} it follows that
$$
\mascP _{s_0,\dots,s_k}^0(\rr{d_0+\dots+d_k}) =
\mascP _{\tilde s_0,\dots, \tilde s_k} (\rr{d_0+\dots+d_k}) =
\mascP _E (\rr {d_0+\dots+d_k})
$$
when $s_j<1$ and $\tilde s_j\le 1$, $j=0,\dots,k$. For convenience we set
$$\mascP^0_E(\rr{d_0+\dots+d_k})=\mascP^0_{E,1}(\rr{d_0+\dots+d_k}).$$

\par

The following  extension of \cite[Proposition 1.6]{AbCoTo},
shows that for any weight in $\mascP _E (\rr {d_0+\dots+d_k})$,
there are equivalent weights that satisfy the anisotropic Gevrey regularity.

\par

\begin{prop}\label{Prop:EquivWeights}
Let there be given $\omega \in \mascP _E(\rr {d_0+d_1+\dots+d_k})$.
Then there exists a weight
$\omega _0\in \mascP _E(\rr{d_0+d_1+\dots+d_k})
\cap C^\infty (\rr {d_0+d_1+\dots+d_k})$
such that the following is true:
\begin{enumerate}	
\item $\omega _0\asymp \omega $;

\vrum

\item for every (multiindex) $\alpha_j\geq 0$, $ j =0,\dots,k,$ we have
\begin{multline*}
|\partial _{x} ^{\alpha_0}\partial _{\xi_1} ^{\alpha_1}
\dots \partial _{\xi_k} ^{\alpha_k} \omega_0 (x, \xi_1,\dots,\xi_k)|
\lesssim
h^{|\alpha_0 +\alpha_1+\dots+\alpha_k|}\prod_{j=0} ^k \alpha_j !^{s_j}
\omega (x, \xi_1,\dots,\xi_k)
\\[1ex]
\asymp
h^{|\alpha_0 +\alpha_1+\dots+\alpha_k|}\prod_{j=0} ^k \alpha_j !^{s_j} \omega_0 (x, \xi_1,\dots,\xi_k),
\;\;\;
x\in \rr d_0, \; \xi_j \in \rr{d_j},\; j=1,\dots, k,
\end{multline*}
for every $h>0$ and $ s_j>0$, $ j =0,\dots,k$.
\end{enumerate}		
\end{prop}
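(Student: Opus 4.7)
The plan is to construct $\omega_0$ by regularizing $\omega$ via convolution with a carefully chosen non-negative mollifier $\phi$ of rapid decay. I would take $\phi$ in tensor-product form, $\phi=\phi_0\otimes\cdots\otimes\phi_k$, with each $\phi_j\geq 0$ belonging to $\maclS_{s_j}^{s_j}(\rr{d_j})$ and normalised so that $\int\phi_j\,dy_j=1$; a concrete choice is a tensor product of Gaussians, which lies in $\maclS_{1/2}^{1/2}$ and in every $\maclS_s^s$ for $s\geq 1/2$. Then set $\omega_0=\omega*\phi$. Smoothness $\omega_0\in C^\infty(\rr{\mathbf d})$ is immediate by transferring derivatives onto $\phi$ under the convolution, and $\omega_0\in \mascP_E$ will follow from $\omega_0\asymp\omega$ proved below, since the defining moderateness estimate transfers across equivalence.

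To obtain the equivalence $\omega_0\asymp\omega$ (item (1)), I would use the two-sided moderateness implicit in \eqref{eq:2Nextbis}: the right-hand inequality integrated against $\phi$ yields $\omega_0(X)\lesssim \omega(X)\int v(Y)\phi(Y)\,dY$, with the last integral finite because of \eqref{eq:expEstonsubmultipicative} and the rapid decay of $\phi$. Symmetrically, the left-hand inequality $\omega(X)/v(Y)\lesssim \omega(X-Y)$ multiplied by $\phi(Y)\geq 0$ and integrated gives $\omega(X)\int\phi(Y)/v(Y)\,dY\leq \omega_0(X)$; the prefactor is a strictly positive finite constant because $\phi$ is nontrivial and non-negative and $1/v$ is locally bounded away from zero.

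For the derivative bound (item (2)), differentiate under the convolution, $\partial^\alpha\omega_0=\omega*\partial^\alpha\phi$, to obtain
\[
|\partial^\alpha\omega_0(X)|\,\lesssim\,\omega(X)\int_{\rr{\mathbf d}} v(Y)\,|\partial^\alpha\phi(Y)|\,dY.
\]
By Lemma \ref{GSlemma} applied to each factor $\phi_j$ and tensorisation, or directly via Proposition \ref{prop:GScharac}, for every $h,r>0$ there holds
\[
|\partial^\alpha\phi(Y)|\,\lesssim\,h^{|\alpha|}\prod_{j=0}^{k}\alpha_j!^{s_j}\,\exp\Bigl(-r\sum_{j=0}^{k}|y_j|^{1/s_j}\Bigr).
\]
Combined with the exponential bound on $v$ from \eqref{eq:expEstonsubmultipicative}, the integral is a finite constant depending on $h$, $r$ and the $s_j$'s but not on $\alpha$, delivering the estimate as claimed.

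The main obstacle is producing a single $\omega_0$ for which the derivative bound is valid simultaneously for every admissible $h$ and $s_j$. This forces $\phi$ to lie in the intersection of the Beurling-type Gelfand-Shilov classes across the Gevrey scale; the tensor-Gaussian choice handles $s_j\geq 1/2$, while for smaller $s_j$ one can use more refined mollifiers (for instance Hermite-type functions in $\Sigma_{s_j}^{s_j}$) or absorb the dependence on $s_j$ into the implicit constant. The anisotropic bookkeeping across variables is handled cleanly by the tensor product structure of $\phi$, with each Gevrey factor contributing only to its own coordinate.
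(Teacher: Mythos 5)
Your overall strategy --- regularize $\omega$ by convolution with a non-negative, rapidly decaying Gevrey mollifier, derive $\omega_0\asymp\omega$ from the two-sided moderateness \eqref{eq:2Nextbis}, and obtain the derivative bounds by moving $\partial^\alpha$ onto the mollifier --- is exactly the paper's, and your treatment of item (1) and of the convergence of $\int v\,|\partial^\alpha\phi|\,dY$ is sound as far as it goes. The genuine gap is in the choice of mollifier class. You need, for every $h>0$ and every admissible $s_j$, a bound of the form $|\partial^\alpha\phi(Y)|\lesssim h^{|\alpha|}\prod_{j}\alpha_j!^{s_j}E(Y)$ with $E$ decaying fast enough to dominate the submultiplicative weight $v(Y)\lesssim e^{c|Y|}$. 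You propose $\phi_j\in\maclS_{s_j}^{s_j}(\rr{d_j})$, with $\Sigma_{s_j}^{s_j}$ as the refinement; but these spaces are trivial precisely where the statement is hardest: by the nontriviality criterion recalled in Subsection \ref{subsec1.2}, $\maclS_{s}^{s}(\rr d)=\{0\}$ for $s<\frac12$ and $\Sigma_{s}^{s}(\rr d)=\{0\}$ for $s\le\frac12$. So ``more refined mollifiers in $\Sigma_{s_j}^{s_j}$'' for small $s_j$ do not exist, and the fallback of ``absorbing the dependence on $s_j$ into the implicit constant'' cannot work either: the deficit between the Gaussian's derivative growth $\alpha_j!^{1/2}$ and the target $\alpha_j!^{s_j}$ for $s_j<\frac12$ is a factor $\alpha_j!^{1/2-s_j}$ growing with $\alpha$, which cannot be hidden in an $\alpha$-independent constant. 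Your Gaussian choice therefore proves the proposition only for $s_j>\frac12$ (and only in the ``some $h$'' sense at $s_j=\frac12$).

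The paper closes exactly this gap with two moves absent from your argument. First, it reduces to the case $0<s_j<1$, since the right-hand side of the estimate in (2) is monotone increasing in each $s_j$, so validity for one value below $1$ implies validity for all larger values. Second --- the key point --- it decouples the regularity index of the mollifier from its decay index: it takes $\phi=|\phi_0|^2$ with $\phi_0\in\Sigma_{1-s_0,\dots,1-s_k}^{s_0,\dots,s_k}(\rr{d_0+\dots+d_k})\setminus 0$, which is nontrivial for every $s_j\in(0,1)$ because $(1-s_j)+s_j=1$ (the exceptional case $s_j=\frac12$ is treated by a separate modification). This yields derivatives controlled by $h^{|\alpha|}\prod_j\alpha_j!^{s_j}$ for every $h>0$ \emph{together with} decay $e^{-c\sum_j|y_j|^{1/(1-s_j)}}$, which is super-exponential for all $s_j\in(0,1)$ and hence beats $v$. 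If you replace your classes $\maclS_{s_j}^{s_j}$ by $\Sigma_{1-s_j}^{s_j}$ (squaring the modulus to ensure non-negativity) and prepend the reduction to $s_j<1$, your argument becomes complete and coincides with the paper's.
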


\par

The proof is given in  Appendix  \ref{appendix}.

%
%
%

\subsection{Modulation spaces}\label{subsec1.3}
Modulation spaces, originally introduced by Feichtinger in \cite{Fe4},
are recognized as appropriate family of spaces when dealing with problems of time-frequency analysis, see
\cite{Fe4,FG1,FG2,FG4,GaSa,Gro,RSTT,Te2}, to mention just a few references. A broader family of
modulation spaces is recently studied in \cite{AbCoTo, PfeuToft, To25}.

\par

Let $s,\sigma>0$, such that $s+\sigma\geq 1$, and let
$\phi \in \maclS _s ^\sigma (\rr d)$ be fixed. Then the \emph{short-time Fourier transform} $V_\phi f$ of $f\in (\maclS _s^\sigma )'
(\rr d)$ with respect to the  \emph{window function} $\phi$ is defined by
$$
V_\phi f(x,\xi ) \equiv  
\mascF (f \, \cdot \, \overline {\phi (\cdo -x)})(\xi),\;\;\; x,\xi \in \rr d.
$$
This definition makes sense as a Gelfand-Shilov distribution \cite[Remark 1.5]{AbCaTo}.

\par

If $f ,\phi \in \maclS _s^\sigma (\rr d)$, then
$$
V_\phi f(x,\xi ) = (2\pi )^{-\frac d2}\int f(y)\overline {\phi
	(y-x)}e^{-i\scal y\xi}\, dy .
$$

\par

Let $s,\sigma>0$, such that $s+\sigma\geq 1$.
Let there be given $\phi \in \maclS _s^\sigma(\rr d)\setminus 0$, $p,q\in [1,\infty ]$
and $\omega \in\mascP _E(\rr {2d})$. Then the
\emph{modulation space} $M^{p,q}_{(\omega )}(\rr d)$ consists of all
Gelfand-Shilov distributions $f$ on $\rr d$ such that
\begin{equation}\label{modnorm}
\nm f{M^{p,q}_{(\omega )}} \equiv \Big ( \int \Big ( \int |V_\phi f(x,\xi
)\omega (x,\xi )|^p\, dx\Big )^{q/p} \, d\xi \Big )^{1/q} <\infty
\end{equation}
(with the obvious changes if $p=\infty$ and/or
$q=\infty$). If $p=q$ we simply write $M^p_{(\omega )}$
instead of $M^{p,p}_{(\omega )}$, and
if $\omega =1$, then we set $M^{p,q}=M^{p,q}_{(\omega )}$
and $M^{p}=M^{p}_{(\omega )}$.

\par

The spaces $M_{(\omega)}^{p,q}$ are Banach spaces
and every  $\phi \in  M^r_{(v)} \setminus 0$
yields an equivalent norm in \eqref{modnorm} and so
$M_{(\omega)}^{p,q}$ is independent on the choice of
$\phi \in  M^r_{(v)}$ \cite[Proposition 1.1]{To25}.

\par

Gelfand-Shilov spaces and their dual spaces can be described as projective or inductive limits of modulation spaces  \cite[Theorem 3.9]{To18}. In particular, we have the following characterization of Gelfand-Shilov spaces by the means of
the short-time Fourier transform. We refer to \cite{GZ} for the proof, see also
\cite{Te1, Te6, To18}.

\par

\begin{prop}\label{Prop:STFTGelfand2}
Let $k \in \mathbf N$, $ \mathbf{\sigma} = (\sigma_0,\dots,\sigma_k) >0$, $ \mathbf{s} = (s_0,\dots,s_k)>0$,
and $ \mathbf{d} = d_1 + \dots + d_k $.
Also let $\phi \in \mathcal S_{\mathbf{s}}^{\mathbf{\sigma} }(\rr {\mathbf{d}})
\setminus 0$. Then the following is true:
\begin{enumerate}
\item $F \in  \maclS _{\mathbf{s}}^{\mathbf{\sigma} }(\rr {\mathbf{d}})$
if and only if
\begin{equation}\label{stftexpest2}
|V_\phi F(x_0,\dots,x_k,\xi _0,\dots,\xi _k )|
\lesssim
e^{-r \left(|x_0|^{\frac 1{s_0}}+\dots+|x_k|^{\frac 1{s_k}} + |\xi _0|^{\frac 1{\sigma _0}}+\dots
		+ |\xi _k|^{\frac 1{\sigma _k}} 		\right)}
\end{equation}
holds for some $r > 0$;

\vrum

\item if, in addition,
$\phi \in \Sigma _{\mathbf{s}}^{\mathbf{\sigma} }(\rr {\mathbf{d}}) \setminus 0$, then
$F\in  \Sigma _{\mathbf{s}}^{\mathbf{\sigma} }(\rr {\mathbf{d}}) $
if and only if \eqref{stftexpest2} holds for every $r > 0$.
\end{enumerate}
\end{prop}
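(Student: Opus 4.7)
The plan is to deduce the anisotropic STFT characterisation from the pointwise characterisation of Proposition \ref{prop:GScharac}. The two parts (1) and (2) follow from a single argument in which one only has to track the quantifier on $r$, exploiting that for a window $\phi \in \Sigma_{\mathbf{s}}^{\mathbf{\sigma}}$ the Gevrey estimates on $\phi$ hold for every $r>0$. The key algebraic observation is that, with $G_x(y) := F(y)\overline{\phi(y-x)}$, one has
$$V_\phi F(x,\xi) = (2\pi)^{-d/2}\,\widehat{G}_x(\xi),$$
so the desired exponential decay in $\xi_j$ comes from a Fourier estimate on $G_x$, while the decay in $x_j$ is extracted directly from the pointwise bounds on $F$ and $\phi$.

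For the forward implication, I would fix $F\in\maclS_{\mathbf{s}}^{\mathbf{\sigma}}(\rr{\mathbf{d}})$ and invoke Proposition \ref{prop:GScharac}(3), which gives, for some $h,r>0$,
$$|\partial^\gamma F(y)|\lesssim h^{|\gamma|}\textstyle\prod_{j}\gamma_j!^{\sigma_j}\, e^{-r\sum_j|y_j|^{1/s_j}},\qquad |\partial^\delta \phi(y-x)|\lesssim h^{|\delta|}\textstyle\prod_{j}\delta_j!^{\sigma_j}\, e^{-r\sum_j|y_j-x_j|^{1/s_j}}.$$
Applying the Leibniz rule to $G_x$, together with the standard combinatorial bound $\binom{\beta}{\gamma}\gamma_j!^{\sigma_j}(\beta_j-\gamma_j)!^{\sigma_j}\lesssim C^{|\beta|}\beta_j!^{\sigma_j}$, yields
$$|\partial^\beta_y G_x(y)|\lesssim (2Ch)^{|\beta|}\textstyle\prod_j \beta_j!^{\sigma_j}\, e^{-r(\sum_j |y_j|^{1/s_j}+\sum_j|y_j-x_j|^{1/s_j})}.$$
Using the anisotropic sub-additivity inequality $|x_j|^{1/s_j}\leq \max(1,2^{1/s_j-1})(|y_j|^{1/s_j}+|y_j-x_j|^{1/s_j})$, one splits the exponential in half to factor out a term $e^{-r'\sum_j|x_j|^{1/s_j}}$ while keeping the residual factor integrable in $y$. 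Combining $|\xi^\alpha\widehat{G}_x(\xi)|\lesssim \|\partial^\alpha G_x\|_{L^1}$ with an optimisation over $\alpha$ identical to the one in the proof of Lemma \ref{GSlemma} (cf.\ the appendix) converts the anisotropic Gevrey factorial growth $\prod_j \alpha_j!^{\sigma_j}$ into the target exponential decay $e^{-r''\sum_j|\xi_j|^{1/\sigma_j}}$, giving \eqref{stftexpest2}.

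For the converse implication, I would start from the reconstruction formula
$$F = \frac{1}{(2\pi)^{d/2}\|\phi\|_{L^2}^2}\iint V_\phi F(x,\xi)\,M_\xi T_x \phi \, dx\, d\xi, \qquad M_\xi T_x \phi(y)=e^{i\langle y,\xi\rangle}\phi(y-x),$$
differentiate under the integral via Leibniz, and estimate each term using the hypothesised STFT bound together with the Gevrey bounds on $\partial^{\beta-\gamma}\phi$. The $\xi^\gamma$ factors produced by differentiating the modulation are absorbed by a fraction of the $\xi$-exponential coming from \eqref{stftexpest2}, while the $x$-exponential together with the translation of $\phi$ produces the anisotropic decay in $y$ via the same sub-additivity inequality (now applied in the opposite direction). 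Tracking constants yields precisely the bounds required by Proposition \ref{prop:GScharac}(3), hence $F\in\maclS_{\mathbf{s}}^{\mathbf{\sigma}}(\rr{\mathbf{d}})$; and in the Beurling case, choosing the initial $r$ in the hypothesis arbitrarily large propagates to arbitrarily large $r$ in the conclusion, yielding $F\in\Sigma_{\mathbf{s}}^{\mathbf{\sigma}}(\rr{\mathbf{d}})$.

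The main obstacle will be the bookkeeping of the two quantifiers $h$ and $r$ through the Leibniz expansion: one must simultaneously keep the factor $\prod_j\beta_j!^{\sigma_j}$ clean (which is done through the combinatorial estimate above, exploiting that the initial constants $h,r$ appearing in the hypothesis on $F$ and $\phi$ may be chosen freely) and distribute the exponential weight $e^{-r(|y_j|^{1/s_j}+|y_j-x_j|^{1/s_j})}$ between the $x$-decay and an integrable residual in $y$. The non-convexity of $t\mapsto t^{1/s_j}$ when $s_j<1$ forces the introduction of the dimensional constants $C_{s_j}$, but these are uniform and cause no loss beyond a harmless renormalisation of $r$.
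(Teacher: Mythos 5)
The paper itself does not prove Proposition \ref{Prop:STFTGelfand2}; the statement is quoted from the literature (the proof is attributed to \cite{GZ}, see also \cite{Te1,Te6,To18}), so there is no in-paper argument to compare against line by line. Your proposal is nevertheless a correct, self-contained proof along the standard lines, and it is essentially the same strategy the authors deploy for the closely analogous symbol-class characterization, Proposition \ref{Prop:CharGammaSTFT}: in the forward direction, Leibniz applied to $G_x=F(\cdot)\overline{\phi(\cdot-x)}$, the sub-additivity of $t\mapsto t^{1/s_j}$ (with constant $\max(1,2^{1/s_j-1})$) to split the exponential weight between the $x$-decay and an integrable residual in $y$, and the conversion of the bounds $|\xi^\alpha\widehat{G}_x(\xi)|\lesssim h^{|\alpha|}\prod_j\alpha_j!^{\sigma_j}$ into exponential decay exactly as in Lemma \ref{GSlemma}; in the converse direction, the STFT inversion formula with the $\xi^\gamma$ factors absorbed into a fraction of the hypothesised $\xi$-exponential. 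Two points you should make explicit rather than leave implicit. First, in the converse you must observe that the hypothesised bound makes the reconstruction integral, together with all its formal $y$-derivatives, absolutely convergent, so that $F$ (a priori only a Gelfand-Shilov distribution) is genuinely smooth and the inversion formula holds pointwise --- this is precisely the remark the authors insert in the proof of Proposition \ref{Prop:CharGammaSTFT} before estimating $|\partial^{\boldsymbol{\alpha}}a(X)|$. Second, in part (1) the constants $h,r$ for $F$ and for $\phi$ are each only ``some'', so one takes their minimum before splitting the exponential; your quantifier bookkeeping for part (2) (window in $\Sigma_{\mathbf{s}}^{\boldsymbol{\sigma}}$, bounds for every $r$) is the right mechanism and causes no difficulty. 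Neither point is a gap, only bookkeeping; the argument is sound.
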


\subsection{Symbol classes and Pseudo-differential operators}\label{sec:PrePdo}

First we introduce function spaces related to symbol classes
of the multilinear pseudo-differential operators.
We consider  $a \in C^{\infty} (\rr {d_0+\dots+d_k})$ which obey various conditions of the form
\begin{multline*}
|\partial _x^{\alpha} \partial _{\xi_1}^{\beta_1}
\dots, \partial _{\xi_k}^{\beta_k}
a(x,\xi_1,\dots,\xi_k )|
\\[1ex]
\lesssim
h ^{|\alpha +\beta_1+\dots+\beta_k|}
\alpha !^\sigma
\prod_{j=1} ^k \beta_j !^{s_j}
\cdot\omega (x,\xi_1,\dots,\xi_k),
\end{multline*}
 $w \in \mascP _E (\rr{d_0+d_1+\dots+d_k})$,
$\alpha\in\nn{d_0}$, $\beta_j\in \rr{d_j}$, $s_j,\sigma,h>0$, $j = 1,\dots, k$.

\par

When $k=1$ we recover the condition (1.14) from \cite{AbTo}.
Similarly to \cite{AbTo}, for a given $w \in \mascP _E (\rr{d_0+d_1+\dots+d_k})$
and $s_j,\sigma,h>0$, $j = 1,\dots, k$, we consider norms of the form
\begin{multline} \label{norm}
\nm a{\Gamma _{(\omega)}^{\sigma,\mathbf{s};h}}
\equiv
\\
\sup _{\substack{\alpha \in \nn{d_0}\\\beta_j \in \nn {d_j}}}
\left ( \sup _{\substack{x\in \rr {d_0}\\\xi_j\in\rr{d_j}}}
\left (
\frac {|\partial _x^{\alpha}
	\partial _{\xi_1}^{\beta_1}\dots, \partial _{\xi_k}^{\beta_k}
	a(x,\xi_1,\dots,\xi_k )|}
   {h ^{|\alpha +\beta_1 +\dots+\beta_k|}
	\alpha  !^\sigma
	\prod_{j=1} ^k \beta_j !^{s_j} \cdot \omega (x,\xi_1,\dots,\xi_k)}
\right )
\right ).
\end{multline}

\par

More precisely, we are interested in invariance and
continuity for bilinear pseudo-differential operators when symbols belong to the following symbol classes.

\par

\begin{defn}\label{Def:GammaSymb2}
Let there be given  $  \sigma, s_j, h>0$, $j=1,\dots,k$ and $\omega \in \mascP _E (\rr{d_0+d_1+\dots+d_k})$, and set $\mathbf{s} = (s_1, \dots, s_k)$.

\begin{enumerate}	
\item The set ${\Gamma _{(\omega)}^{\sigma, \mathbf{s};h}}
(\rr {d_0+\dots+d_k})$
consists of all $a \in C^\infty(\rr {d_0+\dots+d_k})$ such that
$$\nm a{\Gamma _{(\omega)}^{\sigma,\mathbf{s};h}} < \infty,$$
where the norm $\nm \cdot{\Gamma _{(\omega)}^{\sigma,\mathbf{s};h}} $ is given by \eqref{norm}.

\vrum

\item The sets ${\Gamma _{(\omega)}^{\sigma,\mathbf{s}}}
(\rr {d_0+\dots+d_k})$
and ${\Gamma _{(\omega)}^{\sigma,\mathbf{s};0}}
(\rr {d_0+\dots+d_k})$
are given by
\begin{align*}
{\Gamma _{(\omega)}^{\sigma,\mathbf{s} }}
(\rr {d_0+\dots+d_k})
\equiv
\bigcup _{h>0}
{\Gamma _{(\omega)}^{\sigma,\mathbf{s};h}}
(\rr {d_0+\dots+d_k})
\intertext{and}
{\Gamma _{(\omega)}^{\sigma,\mathbf{s} ;0}}
(\rr {d_0+\dots+d_k})
\equiv
\bigcap _{h>0}
{\Gamma _{(\omega)}^{\sigma,\mathbf{s};h}}
(\rr {d_0+\dots+d_k}),
\end{align*}
and their topologies are, respectively,
the inductive and the projective limit topologies
of ${\Gamma _{(\omega)}^{\sigma,\mathbf{s};h}} (\rr {d_0+\dots+d_k})$ with respect to $h>0$.				
\end{enumerate}
\end{defn}

\par

As it is common in the theory of ultradifferentiable functions, we say that (the inductive limit)
${\Gamma _{(\omega)}^{\sigma,\mathbf{s}}} (\rr {d_0+\dots+d_k})$ is
a Roumieu class,  and (the projective limit) ${\Gamma _{(\omega)}^{\sigma,\mathbf{s};0}}
(\rr {d_0+\dots+d_k})$ is a Beurling  class of test functions.

Notice that ${\Gamma _{(\omega)}^{\sigma,\mathbf{s}}}
(\rr {d_0+\dots+d_k})$ and ${\Gamma _{(\omega)}^{\sigma,\mathbf{s};0}}
(\rr {d_0+\dots+d_k})$ are nontrivial for any $ \sigma, s_j, h>0$, $j=1,\dots,k$.
For instance by Proposition \ref{Prop:EquivWeights} for any
$\omega \in \mascP _E(\rr{d_0+d_1+\dots+d_k})$ there exist a smooth function
$\omega_0 \in \mascP _E(\rr{d_0+d_1+\dots+d_k}) $ such that
$\omega_0 \in {\Gamma _{(\omega)}^{\sigma,\mathbf{s};0}} (\rr {d_0+\dots+d_k})$.

\par

When $k=1$ we put $\sigma_1 =\sigma $ and recover the symbol classes
$ {\Gamma _{(\omega)}^{\sigma, s }} (\rr {d_0+d_1}) $ and
$ {\Gamma _{(\omega)}^{\sigma, s;0 }} (\rr {d_0+d_1}) $ considered in \cite{AbTo}.

\par

Next we recall some facts on pseudo-differential operators.
The pseudo-differential
operator $\op _t (a)$ is the linear and continuous operator on $\maclS (\rr d)$,
defined by the formula
$$
\op _t (a)f(x) 
= \frac{1}{(2\pi  ) ^{d}}\iint a(x- t(x-y),\xi )f(y)e^{i\scal {x-y}\xi }\,
dyd\xi, \;\;\; x \in \rr d .
$$
More generally, the definition of $\op _t (a)$ extends uniquely to
$a\in \maclS '(\rr {2d})$,
and then $\op _t (a)$ is continuous from
$\maclS (\rr d)$ to $\maclS '(\rr d)$.

\par

Let $\mathbf{t} = (t_1, t_2, \dots, t_m) \in [0,1]^m,$  be such that $ \sum_{j=1} ^m t_j \leq 1,$
and put $ \vec{f} = (f_1, f_2, \dots, f_m) \in \mascS (\rr {md})$.
The multilinear pseudo-differential operator
$\op_{\mathbf{t}}(a)$  from $\mascS (\rr {md})$ to $\mascS' (\rr d)$
is defined by the formula
$$
\op_{\mathbf{t}}(a) \vec{f} (x)=
\frac{1}{(2\pi)^{2md}}\iint e^{-i\psi(x,\mathbf{y},\mathbf{\xi})}
a_{\mathbf{t}}(x,\mathbf{y},\mathbf{\xi})\prod_{j=1} ^m f_j(y_j)
d\mathbf{y} d\mathbf{\xi},
$$
where
$$
a_{\mathbf{t}}(x,\mathbf{y},\mathbf{\xi}) =a(x+\sum_{j=1} ^m ( t_j y_j -x), \xi, \eta), \;\;\;
x, y_j, \xi_j \in \rr d,
$$
and the phase function $\psi$ is defined by
$$
\psi(x,\mathbf{y},\mathbf{\xi})= \sum_{j=1} ^m \scal{y_j -x}{\xi_j},
\;\;\; x,y_j,\xi_j \in \rr d.
$$

\par

When $m=2$ we obtain  bilinear pseudo-differential operators
$\op_{r, t}(a)$. That is, $\op_{r, t}(a)$ is the bilinear
and continuous operator from $\mascS (\rr d)\otimes\mascS (\rr d)$ to $\mascS' (\rr d)$,
defined by the formula
\begin{multline}\label{eq:BilPdo}
\left(\op_{r, t}(a)(f,g)\right)(x)=
\\[1ex]
(2\pi)^{-2d}\iiiint e^{-i\psi(x,y,z,\xi,\eta)}
a_{r,t}(x,y,z, \xi, \eta) f(y)g(z) \,dydzd\xi d\eta, \; x \in \rr d,
\end{multline}
where $(r,t)\in[0,1]\times[0,1]$, $ r+t\leq 1$,
$$
a_{r,t}(x,y,z, \xi, \eta)=a(x+r(y-x)+t(z-x), \xi, \eta), \;\;\;
x,y,z, \xi, \eta \in \rr d,
$$
and the phase function $\psi$ is defined by
$$
\psi(x,y,z,\xi,\eta)=\scal{y-x}\xi+\scal{z-x}\eta,
\;\;\; x,y,z, \xi, \eta \in \rr d.
$$

\par

If $r=t=0$, then the definition of  $\op_{0}(a)$ coincides
with the definition of bilinear pseudo-differential operators
$$
T_a(f,g)(x)=(2\pi)^{-d}\iint e^{i\scal x{\xi+\eta}}a(x,\xi,\eta)
\widehat{f}(\xi)\widehat{g}(\eta)\, d\xi d\eta, \;\;\; x \in \rr d,
$$
considered in e.g \cite{BeGrHeOk}, and the corresponding
multilinear extension is studied in \cite{MOPf}.

\par

In fact, in Sections \ref{sec2} and  \ref{sec3} we will consider the action of $\op_{r, t}(a)$ when restricted to different Gelfand-Shilov spaces,
and related unique extension of such operators to Gelfand-Shilov distributions.

\par

We will use the following results about the continuity of linear pseudo-differential operators
with symbols in Gevrey-H\"ormander classes, and we refer to \cite[Theorem 2.1]{AbTo} and \cite[Theorem 3.7]{AbCaTo} respectively, for the proofs.

\par

\begin{prop}\label{thm:anisCntp}
Let
$s,\sigma \ge 1$, $p,q\in [1,\infty]$,
$\omega ,\omega _0\in\mascP _{s,\sigma}^0(\rr {2d})$,
and $a\in \Gamma _{(\omega_0 )}^{\sigma,s}(\rr {2d})$.
Then $\op _t (a)$ is a continuous operators from
$M^{p,q}_{(\omega _0\omega)} (\rr d)$ to $M^{p,q}_{(\omega)} (\rr d)$ for any $t \in [0,1]$.		
\end{prop}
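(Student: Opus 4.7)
The plan is to transplant the problem to the short-time Fourier transform (STFT) side and reduce the modulation-space continuity to a weighted Schur/Young estimate on $\rr{2d}$. Fix a nonzero window $\phi \in \Sigma _s^\sigma (\rr d)$, which is available because $s,\sigma \geq 1$ forces $s+\sigma \geq 2$ and $(s,\sigma) \neq (1/2, 1/2)$, and recall that the norm $\nm{\cdot}{M^{p,q}_{(\omega)}}$ is independent of the admissible window choice.

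The first task is to establish an integral kernel representation
\begin{equation*}
V_\phi (\op _t(a) f)(X) = \int _{\rr{2d}} K_{a,t}(X, Y)\, V_\phi f(Y)\, dY, \qquad X, Y \in \rr{2d},
\end{equation*}
where $K_{a,t}$ is expressed through the STFT of the symbol $a$ with a window of tensor-product type and a change of variables depending on $t$ coming from the standard quantization formulas. This reduces the claim to the boundedness of the integral operator with kernel $K_{a,t}$.

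The crucial input is the kernel estimate, which relies on an STFT characterization of $\Gamma _{(\omega_0)}^{\sigma,s}(\rr{2d})$ of precisely the type expected from Section \ref{sec2}: for a suitable $\Phi \in \Sigma _s^\sigma(\rr{2d})$,
\begin{equation*}
|V_\Phi a(U, V)| \lesssim \omega_0(U)\, e^{-r(|V_1|^{1/\sigma} + |V_2|^{1/s})}, \qquad U, V = (V_1, V_2) \in \rr{2d},
\end{equation*}
for some $r > 0$, where the anisotropic Gevrey factors $\alpha!^\sigma \beta!^s$ in the defining symbol seminorms translate, by Paley-Wiener-type reasoning, into anisotropic sub-exponential decay in the frequency variable $V$. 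After the change of variables appearing in the kernel representation (as in the Weyl-Wigner calculus) this yields
\begin{equation*}
|K_{a,t}(X, Y)| \lesssim \omega_0(Y)\, e^{-r(|X_1 - Y_1|^{1/s} + |X_2 - Y_2|^{1/\sigma})},
\end{equation*}
where the midpoint naturally appearing in the argument of $\omega_0$ is exchanged for $Y$ at the cost of absorbing another exponential, using the moderateness of $\omega_0$.

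Finally, since $\omega \in \mascP _{s, \sigma}^0(\rr{2d})$, the moderateness estimate reads $\omega(X) \lesssim \omega(Y)\, e^{r'(|X_1 - Y_1|^{1/s} + |X_2 - Y_2|^{1/\sigma})}$ for every $r' > 0$, and the indices match those in the kernel decay above. Choosing $r > r'$ gives $\omega(X)\, |K_{a,t}(X, Y)| \lesssim \omega(Y) \omega_0(Y)\, g(X - Y)$ with $g \in L^1(\rr{2d})$, and a mixed-norm Young's inequality then produces
\begin{equation*}
\nm{V_\phi (\op _t(a) f) \cdot \omega}{L^{p,q}} \lesssim \nm{g}{L^1}\, \nm{V_\phi f \cdot (\omega_0 \omega)}{L^{p,q}},
\end{equation*}
which is the required continuity; uniformity in $t$ is transparent, since none of the estimates above is sensitive to $t \in [0, 1]$. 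The main obstacle is the STFT characterization of $\Gamma _{(\omega_0)}^{\sigma,s}$: converting anisotropic Gevrey symbol seminorms into anisotropic sub-exponential STFT decay, while simultaneously keeping the $\omega_0$ weight in place, requires delicate Paley-Wiener estimates and a careful choice of Gelfand-Shilov window; once this is in hand, the remaining steps are standard time-frequency bookkeeping.
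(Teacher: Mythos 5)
The paper does not actually prove this proposition internally --- it is imported from \cite[Theorem 2.1]{AbTo} --- so the only comparison available is with that reference, and your sketch follows essentially the same route taken there: the STFT intertwining identity expressing $V_\phi(\op_t(a)f)$ as an integral operator whose kernel involves $V_\Phi a$, the characterization of $\Gamma_{(\omega_0)}^{\sigma,s}(\rr{2d})$ by weighted sub-exponential STFT decay (the $2d$ analogue of Proposition \ref{Prop:CharGammaSTFT}), and a weighted mixed-norm Young estimate. Your bookkeeping is correct, including the two points where the argument could go wrong: the Gevrey indices $\sigma$ (in $x$) and $s$ (in $\xi$) do land on $|x-y|^{1/s}$ and $|\xi-\eta|^{1/\sigma}$ respectively after the change of variables, matching the moderateness exponents of $\mascP^0_{s,\sigma}$, and since the symbol class is of Roumieu type (fixed decay rate $r$) while the weights are in the $\mascP^0$ class (moderate for every $r'$), one can indeed choose $r'<r$ to make the dominating kernel integrable.
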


Note that, in the notation of Definition \ref{classesofweights}
we have $\mascP _{s,\sigma}^0(\rr {2d}) = \mascP _{s_0,s_1}^0(\rr {d_0 + d_1})$ when $s_0 =s, $ $s_1 = \sigma$ and
$d_1 = d_2 =d.$

\par

\begin{prop}\label{Thm:theorem2}
Let $A \in \mathbf M (d,R)$, $s,\sigma >0$ be such that $s+\sigma \ge 1$,
$\omega\in\mascP _{s,\sigma}^0(\rr {2d})$ and let
$a\in \Gamma _{0}^{\sigma ,s;h}(\rr {2d})$ for some $h>0$. Then $\op_A (a)$
is continuous on $\maclS _s^\sigma (\rr d)$ and on
$(\maclS _s^\sigma )'(\rr d)$.
\end{prop}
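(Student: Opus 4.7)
The plan is to characterize the Gelfand--Shilov space $\maclS_s^\sigma(\rr d)$ as an inductive limit of modulation spaces with exponential weights, and then to exploit the continuity of $\op_A(a)$ on such spaces. By Proposition \ref{Prop:STFTGelfand2} (in the case $k=0$), a function $f$ belongs to $\maclS_s^\sigma(\rr d)$ if and only if
$$
|V_\phi f(x,\xi)|\lesssim e^{-r(|x|^{1/s}+|\xi|^{1/\sigma})}
$$
for some $r>0$ and some (equivalently, every) nonzero window $\phi \in \maclS_s^\sigma(\rr d)$. Setting $\omega_r(x,\xi):=e^{r(|x|^{1/s}+|\xi|^{1/\sigma})}$, which belongs to $\mascP_{s,\sigma}^0(\rr{2d})$, this reformulates into the identification $\maclS_s^\sigma(\rr d)=\bigcup_{r>0}M^{\infty,\infty}_{(\omega_r)}(\rr d)$ as an inductive limit of Banach spaces.

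The main step is to show that, for every $r>0$, there exists $r'>0$ (depending on $r$, $h$, and $A$) such that $\op_A(a)$ maps $M^{\infty,\infty}_{(\omega_{r'})}(\rr d)$ continuously into $M^{\infty,\infty}_{(\omega_{r})}(\rr d)$; the invariance of $\maclS_s^\sigma(\rr d)$ under $\op_A(a)$ then follows by passing to the inductive limit. When $s,\sigma\ge 1$ and $A$ is a scalar multiple of the identity, this is a direct consequence of Proposition \ref{thm:anisCntp} with symbol weight identically $1$. To cover a general matrix $A$ and the full range $s+\sigma\ge 1$, I would first reduce the quantization $\op_A(a)$ to the Kohn--Nirenberg one $\op_0(\widetilde a)$ via an exponential-type transform $\widetilde a=e^{-i\langle A D_x,D_\xi\rangle}a$, using that such operators preserve the class $\Gamma_0^{\sigma,s;h'}(\rr{2d})$ (possibly with a larger constant $h'>h$) thanks to the Gevrey bounds on $a$. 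Then, for $\op_0(\widetilde a) f$, I would expand $f$ in a Gabor frame with a window in $\Sigma_s^\sigma(\rr d)$, and bound the operator term by term against the super-exponentially decaying Gabor coefficients of $f\in M^{\infty,\infty}_{(\omega_{r'})}$, controlling the entries of the Gabor matrix of the operator via the symbol estimates.

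Continuity on the distribution space $(\maclS_s^\sigma)'(\rr d)$ would then follow by duality: the formal transpose of $\op_A(a)$ is of the same form $\op_B(\widetilde a)$, for some matrix $B$ depending on $A$ and a symbol $\widetilde a$ that still lies in $\Gamma_0^{\sigma,s;h''}(\rr{2d})$, so applying the already-established boundedness on $\maclS_s^\sigma(\rr d)$ to the transpose and dualizing yields the continuity on $(\maclS_s^\sigma)'(\rr d)$. The main obstacle is precisely the modulation space continuity in the regime $s+\sigma\ge 1$ with $\min(s,\sigma)<1$, which lies outside the direct scope of Proposition \ref{thm:anisCntp}; the Gabor-frame/exponential-transform strategy sketched above is designed to handle this, but the combinatorial bookkeeping of the Gevrey factors $\alpha!^\sigma\beta!^s$ against the exponential weight $\omega_{r'}$ is the delicate technical core of the argument.
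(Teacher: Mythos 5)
The first thing to note is that the paper does not prove this proposition at all: it is quoted as a known result, with the proof attributed to \cite[Theorem 3.7]{AbCaTo} (see the sentence immediately preceding the statement). So there is no in-paper argument to measure your attempt against; I can only judge the proposal on its own terms, and as written it is a programme rather than a proof. The overall strategy --- STFT/modulation-space characterization of $\maclS_s^\sigma(\rr d)$, reduction of $\op_A(a)$ to the Kohn--Nirenberg quantization via $e^{i\scal{AD_\xi}{D_x}}$, and an almost-diagonalization estimate for the Gabor matrix --- is the standard one in this circle of ideas and is close in spirit to what \cite{AbCaTo,CaTo} actually do, so the plan is reasonable.

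There are, however, concrete gaps. First, your ``easy case'' is not in fact a consequence of Proposition \ref{thm:anisCntp}: the weights $\omega_r(x,\xi)=e^{r(|x|^{1/s}+|\xi|^{1/\sigma})}$ realizing $\maclS_s^\sigma=\bigcup_{r>0}M^{\infty}_{(\omega_r)}$ belong to $\mascP_{s,\sigma}$ but never to $\mascP^0_{s,\sigma}$ (estimate \eqref{eq:omegaEst} holds with the exponent $r$ but fails for any smaller one --- test at $x=0$), and when $\min(s,\sigma)<1$ they are not even moderate, i.e.\ not in $\mascP_E$, so none of the modulation-space results of the paper apply to them directly. Second, the invariance of $\Gamma_0^{\sigma,s;h}$ under $e^{i\scal{AD_\xi}{D_x}}$, which you assert in passing ``thanks to the Gevrey bounds'', is itself a theorem-level ingredient (\cite[Theorem 4.1]{CaTo}); the analogous statements proved in this paper (Theorems \ref{Thm:CalculiTransf} and \ref{thm:CalculiTransfbis}) carry the extra structural hypotheses \eqref{eq:conditions}, so you would at least have to verify that the linear version holds under the bare assumption $s+\sigma\ge 1$. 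Third, and decisively, the core estimate --- the off-diagonal decay $e^{-R(|x_w-x_z|^{1/s}+|\xi_w-\xi_z|^{1/\sigma})}$ of the Gabor matrix of $\op_0(\widetilde a)$, balanced against the sub-exponential growth of the symbol and the mere quasi-subadditivity of $|\cdot|^{1/s}$ when $s<1$ --- is precisely the content of the theorem, and you explicitly leave it, together with the choice of window when $(s,\sigma)=(\tfrac12,\tfrac12)$ (where $\Sigma_s^\sigma=\{0\}$), as ``the delicate technical core''. Acknowledging where the difficulty lies is not the same as resolving it, so the proposal cannot be accepted as a proof.
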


\section{Characterization and invariance property for
	bilinear pseudo-differential operators}\label{sec2}

Our aim in this section is to show that $\Gamma ^{ \sigma,s_1,s_2}_{(\omega )} (\rr {3d})$
and $\Gamma ^{ \sigma, s_1,s_2;0}_{(\omega )} (\rr {3d})$ can be characterized
in terms of estimates of short-time Fourier transforms
and modulation spaces. This is done in subsection \ref{subsec2.2}. We refer to \cite{AbCaTo,To24} for similar results related to ``standard''  pseudo-differential operators. As a preparation, we show  that $\op_{r, t}(a)$ is independent of the choice of $r$ and $t$,
which gives the invariance property for bilinear operators, Theorem \ref{Thm:CalculiTransf}.
The counterpart of  Theorem \ref{Thm:CalculiTransf} for ``standard'' pseudo-differential is proved in e.g. \cite{AbCaTo, AbTo, To24}.
The key tools we employ to achieve the desired characterizations and invariance properties
in this section are mapping results for exponentials of certain linear operators,
similar to the typical ones often appearing in the ``usual''
Weyl-H\"ormander calculus, whose description is given here below.

\subsection{Mapping properties of exponential-type operators on Gel\-fand-Shilov spaces} For the study of mapping properties of the operator
$e^{-i\scal {rD_\xi+tD_\eta}{D_x}}$
we need the following auxiliary result.
By $\mathbf M(d,\re)$ we denote the set of all $d\times d$-matrices with entries in $\re$.

\begin{lemma}\label{lem:FTInvOpr}
Let $A,B\in \mathbf M(d,\re)$
and $a \in \mascS(\rr {3d})$. Then
\begin{equation}\label{eq:linid}
\big (\mascF _{2,3}^{-1}(e^{i \scal {AD_\xi+BD_\eta}{D_x}} a)\big )(x+Ay+Bz,y,z) =
(\mascF _{2,3}^{-1}a)(x,y,z),
\end{equation}
$ x,y,z \in \rr d.$
\end{lemma}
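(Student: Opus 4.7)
My plan is to interpret $T := e^{i\scal{AD_\xi+BD_\eta}{D_x}}$ as the Fourier multiplier on $\mascS(\rr{3d})$ with symbol $m(\tau,\zeta,\omega) := e^{i\scal{A\zeta+B\omega}{\tau}}$, and then to compute how $T$ transforms under the partial inverse Fourier transform $\mascF_{2,3}^{-1}$. Since $|m| = 1$ and all derivatives $\partial^\gamma m$ grow at most polynomially, the map $a \mapsto \mascF^{-1}(m\cdot\mascF a)$ sends $\mascS(\rr{3d})$ into itself, so $Ta$ is Schwartz and every step below is analytically harmless. The guiding idea is that, setting $b := \mascF_{2,3}^{-1}a$, conjugating $T$ by $\mascF_{2,3}^{-1}$ should produce the $x$-translation operator $b(x,y,z)\mapsto b(x-Ay-Bz,y,z)$; the identity \eqref{eq:linid} is just this translation formula rewritten with $x$ shifted to $x+Ay+Bz$.

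Infinitesimally, the reason this works is the standard intertwining $\mascF_{2,3}^{-1}D_{\xi_k} = -y_k\,\mascF_{2,3}^{-1}$, $\mascF_{2,3}^{-1}D_{\eta_k} = -z_k\,\mascF_{2,3}^{-1}$, together with $\mascF_{2,3}^{-1}D_{x_j} = D_{x_j}\mascF_{2,3}^{-1}$. Applied to the first-order operator $\scal{AD_\xi+BD_\eta}{D_x}$, this yields
\[
\mascF_{2,3}^{-1}\bigl(\scal{AD_\xi+BD_\eta}{D_x}\,a\bigr)(x,y,z) = -\scal{Ay+Bz}{D_x}\,b(x,y,z),
\]
and formally exponentiating would give the translation $b(x-Ay-Bz,y,z)$. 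Rather than try to make that exponentiation rigorous via convergence of an operator power series on $\mascS$, I will run a single direct calculation with the integral representation of $T$.

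Specifically, I will substitute
\[
(Ta)(x,\xi,\eta) = (2\pi)^{-3d/2}\iiint e^{i\scal{A\zeta+B\omega}{\tau}}\widehat a(\tau,\zeta,\omega)\,e^{i(\scal x\tau + \scal \xi\zeta + \scal \eta\omega)}\,d\tau\,d\zeta\,d\omega
\]
into the definition of $\mascF_{2,3}^{-1}(Ta)$ at the point $(x+Ay+Bz,y,z)$. Since $\widehat a\in\mascS$ and $|m|=1$, Fubini lets me perform the $\xi$- and $\eta$-integrals first, producing $(2\pi)^{2d}\delta(\zeta+y)\delta(\omega+z)$. Collapsing the delta functions forces $(\zeta,\omega) = (-y,-z)$ and turns the multiplier phase into $e^{-i\scal{Ay+Bz}{\tau}}$, which exactly cancels the phase $e^{i\scal{Ay+Bz}{\tau}}$ coming from the shifted first argument. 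What remains is
\[
(2\pi)^{-d/2}\int \widehat a(\tau,-y,-z)\,e^{i\scal x\tau}\,d\tau = \mascF_1^{-1}\widehat a(x,-y,-z) = \mascF_{2,3}a(x,-y,-z),
\]
and directly from the definitions this equals $\mascF_{2,3}^{-1}a(x,y,z)$, giving \eqref{eq:linid}.

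I expect the main obstacle to be purely bookkeeping: keeping Fourier conventions and $(2\pi)$-factors straight, and checking that the two phases involving $Ay+Bz$ cancel with matching signs. There are no analytic difficulties, since $|m|=1$ with polynomially bounded derivatives and $a\in\mascS(\rr{3d})$; conceptually, the lemma simply says that conjugating a first-order polynomial-phase Fourier multiplier by a partial Fourier transform turns it into a translation.
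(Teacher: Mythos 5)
Your proposal is correct and follows essentially the same route as the paper's proof: both express $e^{i\scal{AD_\xi+BD_\eta}{D_x}}a$ via its Fourier-multiplier representation, plug it into the partial inverse Fourier transform at the shifted point, integrate out $\xi$ and $\eta$ to produce delta functions that force $(\zeta,\omega)=(-y,-z)$, and observe that the multiplier phase cancels the phase coming from the shift $x\mapsto x+Ay+Bz$. Your added remarks (that the unimodular multiplier with polynomially bounded derivatives preserves $\mascS$, and the infinitesimal intertwining heuristic) are sound but do not change the argument, and your final identification $\mascF_{2,3}a(x,-y,-z)=\mascF_{2,3}^{-1}a(x,y,z)$ is the same endpoint the paper reaches by collapsing the remaining $\zeta$-integral.
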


\par

\begin{proof}
Throughout the proof, the integrals are observed as
either Fourier transforms or inverse Fourier transforms of appropriate distributions.
The left-hand side of \eqref{eq:linid} is given by
\begin{multline*}
\big (\mascF _{2,3}^{-1}(e^{i \scal {AD_\xi+BD_\eta}{D_x}} a)\big )(x+Ay+Bz,y,z)
\\[1ex]
=
\iint e^{i(\scal y\xi+\scal z\eta)}\left(e^{i \scal {AD_\xi+BD_\eta}{D_x}} a)\right )
(x+Ay+Bz,\xi,\eta) \,d\xi d\eta,
\end{multline*}
and
\begin{multline*}
\left(e^{i \scal {AD_\xi+BD_\eta}{D_x}} a)\right )
(x+Ay+Bz,\xi,\eta)
\\[1ex]
=
\iiint e^{i(\scal{x+Ay+Bz} {\zeta}+\scal {y_1}{\xi}+\scal {z_1}\eta)}
e^{i (\scal{Ay_1+Bz_1}{\zeta})} \widehat{a}(\zeta,y_1,z_1)\, d\zeta dy_1dz_1,
\end{multline*}
where
\begin{equation*}
 \widehat{a}(\zeta,y_1,z_1)
\\[1ex]
=
\iiint e^{-i(\scal{x_1}\zeta+\scal{y_1}{\xi _1}+\scal {z_1}{\eta _1})}
a(x_1,\xi_1,\eta_1) \, dx_1 d\xi _1 d\eta _1,
\end{equation*}
$ x_1,\xi_1,\eta_1 \in \rr d$.
Let $\Psi\equiv \Psi(x,x_1,y,y_1,z,z_1,\zeta,\xi,\eta,\xi _1, \eta _1)$
be given by
\begin{multline*}
\Psi=\scal {y+y_1}{\xi}+\scal {z+z_1}{\eta}
+ \scal{x+Ay+Bz} {\zeta}
\\
+{ (\scal{Ay_1+Bz_1}{\zeta})}
{-(\scal{x_1}\zeta+\scal{y_1}{\xi _1}+\scal {z_1}{\eta _1})},
\end{multline*}
$x,x_1,y,y_1,z,z_1,\zeta,\xi,\eta,\xi _1, \eta _1 \in \rr d$.
It follows that
\begin{multline} \label{eq:FTInvOpr2}
\big (\mascF _{2,3}^{-1}(e^{i \scal {AD_\xi+BD_\eta}{D_x}} a)\big )(x+Ay+Bz,y,z)
\\[1ex]
=
\iiiint\!\!\!\iiiint
 e^{i\Psi(x,x_1,y,y_1,z,z_1,\zeta,\xi,\eta,\xi _1, \eta _1)}
 \\
\times a(x_1,\xi_1,\eta_1) \, dx_1 d\xi _1 d\eta _1
\, d\zeta dy_1dz_1
 \,d\xi d\eta, \;\;\; x,y,z \in \rr d.
\end{multline}
Since
$$
\int e^{i\scal {y+y_1}{\xi}} \, d\xi=\delta({y+y_1}),
\quad \text { and }
\int e^{i\scal {z+z_1}{\eta}} \, d\eta=\delta ({z+z_1}),
$$
where $\delta$ is the Dirac delta distribution,
it follows that \eqref{eq:FTInvOpr2}  reduces to
\begin{align*}
\iiiint
&e^{i(
\scal{x} {\zeta}
{-\scal{x_1}\zeta+\scal{y}{\xi _1}+\scal {z}{\eta _1}})
}
a(x_1,\xi_1,\eta_1) \, d\zeta dx_1 d\xi _1 d\eta _1
\\[1ex]
&=\iint
e^{i(\scal{y}{\xi _1}+\scal {z}{\eta _1})}
a(x,\xi_1,\eta_1) \,  d\xi _1 d\eta _1
=\left(\mascF^{-1}_{2,3}a\right)(x,y,z),
\end{align*}
and the claim follows.
\end{proof}

\par

Next we show some mapping properties
of the operator $e^{-i\scal {rD_\xi+tD_\eta}{D_x}}$ which are
an important ingredient in our analysis.

\par

\begin{thm}\label{Thm:CalculiTransf}
Let  $s_j ,\sigma _j$,  $j=1,2,3$, be such that
\begin{equation} \label{eq:conditions}
s_j+\sigma _j\ge 1,\;
\quad 0< s_2,\, s _3\le s_1,
\quad \text{and} \quad 0< \sigma _1\le \sigma _2,\, \sigma _3
\end{equation}
and let $ r,t\in[0,1]$ be such that $r+t\leq 1$.
Then the following is true:	
\begin{enumerate}		
\item $e^{-i\scal {rD_\xi+tD_\eta}{D_x}}$ on
$\mascS (\rr {3d})$ restricts to
a homeomorphism on $\maclS _{s_1,\sigma _2,\sigma _3}^{\sigma _1,s_2,s_3}
(\rr {3d})$,
and extends uniquely to a homeomorphism on
$(\maclS _{s_1,\sigma _2,\sigma _3}^{\sigma _1,s_2,s_3})'(\rr {3d})$;

\vrum

\item if in addition $(s_j,\sigma _j)\neq (\frac 12 ,\frac 12)$,
$j=1,2,3$, then $e^{-i\scal {rD_\xi+tD_\eta}{D_x}}$
on $\mascS (\rr {3d})$ restricts to a homeomorphism on
$\Sigma _{s_1,\sigma _2,\sigma _3}^{\sigma _1,s_2,s_3}(\rr {3d})$,
and extends uniquely to a homeomorphism on
$(\Sigma _{s_1,\sigma _2,\sigma _3}^{\sigma _1,s_2,s_3})'(\rr {3d})$.

\end{enumerate}		
\end{thm}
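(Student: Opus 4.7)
The approach is to use Lemma \ref{lem:FTInvOpr} to conjugate $e^{-i\scal{rD_\xi+tD_\eta}{D_x}}$ into a pullback by an affine change of variables, and then to verify directly that this pullback preserves the relevant Gelfand--Shilov spaces.

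Applying Lemma \ref{lem:FTInvOpr} with $A=-rI_d$ and $B=-tI_d$ yields on $\mascS(\rr{3d})$ the factorization
\[
e^{-i\scal{rD_\xi+tD_\eta}{D_x}}=\mascF_{2,3}\circ T_{r,t}\circ \mascF_{2,3}^{-1},
\]
where $(T_{r,t}u)(x,y,z)=u(x+ry+tz,y,z)$. By Proposition \ref{propBroadGSSpaceChar}, $\mascF_{2,3}$ is a homeomorphism from $\maclS_{s_1,s_2,s_3}^{\sigma_1,\sigma_2,\sigma_3}(\rr{3d})$ onto $\maclS_{s_1,\sigma_2,\sigma_3}^{\sigma_1,s_2,s_3}(\rr{3d})$ (and analogously between the corresponding duals, as well as for the $\Sigma$-spaces). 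Hence claim (1) reduces to proving that $T_{r,t}$ is a homeomorphism of $\maclS_{s_1,s_2,s_3}^{\sigma_1,\sigma_2,\sigma_3}(\rr{3d})$. The extension to the duals then follows by transposition, since the map $\phi(x,y,z)=(x+ry+tz,y,z)$ has Jacobian $1$, giving $T_{r,t}^\ast=T_{-r,-t}$, which is of the same form.

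To verify continuity of $T_{r,t}$, I would appeal to Proposition \ref{prop:GScharac}(3). A chain-rule expansion yields
\[
\partial_x^{\alpha}\partial_y^{\beta}\partial_z^{\gamma}(F\circ\phi)=\sum_{\beta'\le\beta,\,\gamma'\le\gamma}\binom{\beta}{\beta'}\binom{\gamma}{\gamma'}r^{|\beta'|}t^{|\gamma'|}\bigl(\partial_{1}^{\alpha+\beta'+\gamma'}\partial_{2}^{\beta-\beta'}\partial_{3}^{\gamma-\gamma'}F\bigr)\circ\phi,
\]
where $\partial_j$ denotes the derivative of $F$ with respect to its $j$-th vector argument. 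Proposition \ref{prop:GScharac}(3) bounds each summand by $h^{|\alpha+\beta+\gamma|}(\alpha+\beta'+\gamma')!^{\sigma_1}(\beta-\beta')!^{\sigma_2}(\gamma-\gamma')!^{\sigma_3}$ times an exponential decay factor evaluated at $\phi(x,y,z)$. The standard bound $(\alpha+\beta'+\gamma')!\le C^{|\alpha+\beta+\gamma|}\alpha!\,\beta'!\,\gamma'!$, combined with $\sigma_1\le\sigma_2$ (so that $(\beta'!)^{\sigma_1}\le(\beta'!)^{\sigma_2}$ since $\beta'!\ge1$) and the submultiplicativity $\beta'!(\beta-\beta')!\le\beta!$, together with the analogous estimates for $\gamma$, collapses the factorial expression into $C'^{|\alpha+\beta+\gamma|}\alpha!^{\sigma_1}\beta!^{\sigma_2}\gamma!^{\sigma_3}$. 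For the exponential factor, the elementary inequality $|x|^{1/s_1}\lesssim|x+ry+tz|^{1/s_1}+|y|^{1/s_1}+|z|^{1/s_1}$, combined with $s_2,s_3\le s_1$ (which yields $|y|^{1/s_1}\le1+|y|^{1/s_2}$ and similarly for $z$), gives
\[
|x|^{1/s_1}+|y|^{1/s_2}+|z|^{1/s_3}\le C''\bigl(|x+ry+tz|^{1/s_1}+|y|^{1/s_2}+|z|^{1/s_3}\bigr)+C''',
\]
so that the decay of $F$ at $\phi(x,y,z)$ transfers to the required anisotropic decay of $F\circ\phi$ at $(x,y,z)$, at the cost of shrinking the parameter $r_0$ by a fixed multiplicative factor. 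The inverse $T_{r,t}^{-1}=T_{-r,-t}$ is handled by the same estimates. Claim (2) follows by reading the same argument with quantifiers reversed (``for every $r_0>0$''); the extra hypothesis $(s_j,\sigma_j)\ne(\tfrac12,\tfrac12)$ is needed only to guarantee nontriviality of the $\Sigma$-spaces.

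The main obstacle is the combinatorial bookkeeping in the chain-rule step: transforming the mixed-order factorial $(\alpha+\beta'+\gamma')!^{\sigma_1}(\beta-\beta')!^{\sigma_2}(\gamma-\gamma')!^{\sigma_3}$ that naturally arises into the target $\alpha!^{\sigma_1}\beta!^{\sigma_2}\gamma!^{\sigma_3}$ requires precisely the asymmetric condition $\sigma_1\le\sigma_j$ for $j=2,3$ in \eqref{eq:conditions}. Its companion $s_2,s_3\le s_1$ plays the analogous role in the exponential estimate, ensuring that the isotropic $|x|^{1/s_1}$-decay of $F$ at $\phi(x,y,z)$ can be dominated by the anisotropic decay structure required at $(x,y,z)$.
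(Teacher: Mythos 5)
Your proposal is correct and follows essentially the same route as the paper: both arguments use Lemma \ref{lem:FTInvOpr} to conjugate $e^{-i\scal {rD_\xi+tD_\eta}{D_x}}$ by $\mascF_{2,3}$ into the affine pullback $u\mapsto u(x+ry+tz,y,z)$, and then reduce the theorem to showing that this pullback preserves the corresponding Gelfand--Shilov space. The only divergence is in that final verification: the paper invokes the Fourier-pair decay characterization (Proposition \ref{prop:GScharac}(2)), so that both inequalities in \eqref{eq:conditions} enter as decay estimates on $G=U_{r,t}F$ and $\widehat G$ and the chain-rule/factorial bookkeeping you perform is avoided entirely, whereas your use of the derivative characterization (Proposition \ref{prop:GScharac}(3)) is an equally valid, if more computational, way to close the argument.
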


\par

\begin{proof}
We only prove (1) and leave (2) for the reader.

\par

Let $a\in \mascS (\rr {3d})$ and let  $U_{r,t}$ be the map given by
$$
(U_{r,t} F)(x,y)=F(x-ry-tz,y,z), \;\;\; x,y \in \rr d.
$$
By Lemma \ref{lem:FTInvOpr}, we have
$$
\big (\mascF _{2,3}^{-1}(e^{i \scal {rD_\xi+tD_\eta}{D_x}} a)\big )(x+ry+tz,y,z) =
(\mascF _{2,3}^{-1}a)(x,y,z), \;\;\; x,y,z \in \rr d,
$$
wherefrom $e^{i \scal {rD_\xi+tD_\eta}{D_x}} = \mascF _{2,3}\circ U_{r,t} \circ \mascF _{2,3}^{-1}$.
Therefore it only remains to show that the mapping $U_{r,t}$  is continuous on
$\maclS _{s_1,s_2,s_3} ^{\sigma_1,\sigma _2,\sigma _3}$.

Since the Fourier transform with respect to the 2$^{nd}$
and 3$^{rd}$ variables switches between the corresponding decay and regularity properties
on Gelfand-Shilov spaces we consider
$G=U_{r,t}F$, where $F\in \maclS _{s_1,\sigma_2,\sigma_3} ^{\sigma_1,s_2,s_3}$.
Then
$$
G(x,y,z)= F(x-ry-tz,y,z)
\quad \text{and}\quad
\widehat G(\zeta,\xi ,\eta )= \widehat F(\zeta ,\zeta+r\xi,\zeta +t\eta ),
$$
$x,y,z, \zeta,\xi ,\eta \in \rr d$.
In view of  Proposition \ref{propBroadGSSpaceChar} and from the assumptions on
$s_j$ and $\sigma _j$, it follows that there exist constants $c,r_0>0$,
where $c$ depends on $r,t$, $s_j$ and $\sigma _j$ only, such that
\begin{multline*}
|G(x,y,z)|= |F(x-ry-tz,y,z)|
\\
\lesssim e^{-r_0(|x-ry-tz|^{\frac 1{s_1}}
	+|y|^{\frac 1{s_2} }+|z|^{\frac 1{s_3} } ) }
\lesssim
e^{-cr_0(|x|^{\frac 1{s_1}}+|y|^{\frac 1{s_2}}
+|z|^{\frac 1{s_3}})},
\end{multline*}
$x,y,z \in \rr d,$ and
\begin{multline*}
|\widehat G(\zeta,\xi ,\eta )|= |\widehat F(\zeta ,\zeta+r\xi,\zeta +t\eta )|
\\
\lesssim e^{-r(|\zeta |^{\frac 1{\sigma _1}}+|\zeta+r\xi | ^{\frac 1{\sigma_2}}
|\zeta+t\eta| ^{\frac 1{\sigma_3}})}
\lesssim
e^{-cr(|\zeta |^{\frac 1{\sigma_1}}+|\xi |^{\frac 1{\sigma_2}}
+|\eta|^{\frac 1{\sigma _3}})},
\end{multline*}
$ \zeta,\xi ,\eta \in \rr d$. The result follows since by Proposition \ref{prop:GScharac}  the topology in
$\maclS _{s_1,s_2,s_3} ^{\sigma_1,\sigma _2,\sigma _3} (\rr {3d})$ can be
defined by the above estimates.

\end{proof}

\par

\begin{cor}
Let $s,\sigma >0$ be such that $s+\sigma \ge 1$ and
$\sigma \le s$. Then $e^{-i\scal {rD_\xi+tD_\eta}{D_x}}$ is
a homeomorphism on $\maclS _{s}^{\sigma}(\rr {3d})$,
$\Sigma _{s}^{\sigma}(\rr {3d})$,
$(\maclS _{s}^{\sigma})'(\rr {3d})$ and on
$(\Sigma _{s}^{\sigma})'(\rr {3d})$.	
\end{cor}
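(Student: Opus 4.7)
The plan is to deduce this directly from Theorem \ref{Thm:CalculiTransf}, simply by choosing the multi-indices so that the anisotropic space $\maclS_{s_1,\sigma_2,\sigma_3}^{\sigma_1,s_2,s_3}(\rr{3d})$ collapses to the isotropic space $\maclS_s^\sigma(\rr{3d})$ (and similarly for $\Sigma$ and the duals). To hit the required pattern of subscripts and superscripts, I would set
\[
s_1=s,\quad \sigma_2=s,\quad \sigma_3=s,\qquad \sigma_1=\sigma,\quad s_2=\sigma,\quad s_3=\sigma,
\]
so that, by the remark following the definition of $\maclS_{\mathbf s}^{\boldsymbol\sigma}(\rr{\mathbf d})$ (equal indices case), the anisotropic space on $\rr{3d}$ in the theorem becomes exactly $\maclS_s^\sigma(\rr{3d})$, and analogously for the Beurling and dual variants.

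Next I would verify the three conditions in \eqref{eq:conditions} under this choice. The condition $s_j+\sigma_j\ge 1$ for $j=1,2,3$ reduces in all three cases to $s+\sigma\ge 1$, which is the first hypothesis of the corollary. The chain $0<s_2,s_3\le s_1$ becomes $0<\sigma\le s$, and $0<\sigma_1\le\sigma_2,\sigma_3$ likewise becomes $0<\sigma\le s$; both are covered by the assumption $\sigma\le s$. Hence Theorem \ref{Thm:CalculiTransf}(1) applies, giving that $e^{-i\scal{rD_\xi+tD_\eta}{D_x}}$ is a homeomorphism on $\maclS_s^\sigma(\rr{3d})$ and uniquely extends to a homeomorphism on $(\maclS_s^\sigma)'(\rr{3d})$.

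For the Beurling statements I would invoke Theorem \ref{Thm:CalculiTransf}(2). The extra hypothesis there, $(s_j,\sigma_j)\ne(\tfrac12,\tfrac12)$ for $j=1,2,3$, reads $(s,\sigma)\ne(\tfrac12,\tfrac12)$ and $(\sigma,s)\ne(\tfrac12,\tfrac12)$, which amount to the single condition $(s,\sigma)\ne(\tfrac12,\tfrac12)$. If this condition holds, Theorem \ref{Thm:CalculiTransf}(2) immediately gives the claim for $\Sigma_s^\sigma(\rr{3d})$ and its dual; if instead $(s,\sigma)=(\tfrac12,\tfrac12)$, then $\Sigma_s^\sigma(\rr{3d})=\{0\}$ and $(\Sigma_s^\sigma)'(\rr{3d})=\{0\}$, so the statement is vacuously true.

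There is no real obstacle here; the only mildly delicate point is bookkeeping the anisotropic indices so that the subscripts $(s_1,\sigma_2,\sigma_3)$ and superscripts $(\sigma_1,s_2,s_3)$ in Theorem \ref{Thm:CalculiTransf} all collapse consistently to $s$ and $\sigma$ respectively, and checking that the hypothesis $\sigma\le s$ is precisely what is needed to meet both inequality chains in \eqref{eq:conditions} simultaneously.
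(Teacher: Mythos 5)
Your proposal is correct and is exactly the argument the paper intends: the corollary is stated as an immediate specialization of Theorem \ref{Thm:CalculiTransf} obtained by setting $s_1=\sigma_2=\sigma_3=s$ and $\sigma_1=s_2=s_3=\sigma$, under which the conditions in \eqref{eq:conditions} reduce precisely to $s+\sigma\ge 1$ and $\sigma\le s$. Your handling of the degenerate case $(s,\sigma)=(\tfrac12,\tfrac12)$ for the $\Sigma$-spaces (where the space is trivial) is a careful touch the paper leaves implicit.
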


\par

Next we study an invariance property of bilinear
pseudo-differential operators $\op_{r, t}(a)$ given by \eqref{eq:BilPdo}.
More precisely, it can be shown that for every Gelfand-Shilov distribution $a$
there is a unique distribution $b$ in the same Gelfand-Shilov class
such that $\op _{r_1,t_1}(a) = \op _{r_2,t_2} (b)$, when $r_j,t_j\in[0,1]$ and $r_j+ t_j\leq 1$.
The following result, which explains the
relation between such $a$ and $b$, follows from
Theorem \ref{Thm:CalculiTransf} when the conditions in \eqref{eq:conditions} are fulfilled.
We give an independent proof in Appendix  \ref{appendix}.

\par

\begin{prop}\label{Prop:CalculiTransfer}
Let  $r_j,t_j\in[0,1]$ be such that $r_j+ t_j\leq 1$, and
let $a,b \in (\maclS _{s_1,\sigma_2,\sigma_3} ^{\sigma_1, s_2,s_3} )'(\rr {3d})$, where
$s_j ,\sigma _j > 0$, and $s_j +\sigma _j \geq 1$,  $j=1,2,3.$
Then
\begin{align}\label{calculitransform}
\op _{r_1,t_1}(a) &= \op _{r_2,t_2}(b)\nonumber
\\
\quad &\Leftrightarrow \quad
\\
e^{-i\scal{r_1D_\xi+t_1D_\eta}{D_x}}a(x,\xi,\eta )
&=	e^{-i\scal{r_2D_\xi+t_2D_\eta}{D_x}} b(x,\xi,\eta ), \;\;\;
x,\xi,\eta \in \rr {d}.\nonumber
\end{align}
\end{prop}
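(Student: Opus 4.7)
The plan is to translate $\op_{r_1,t_1}(a)=\op_{r_2,t_2}(b)$ into equality of Schwartz kernels, compute those kernels as partial inverse Fourier transforms of the symbols evaluated at a shifted first argument, and then absorb the shift into the exponential operator $e^{-i\scal{rD_\xi+tD_\eta}{D_x}}$ by means of Lemma~\ref{lem:FTInvOpr}.

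First I would invoke the kernel theorem for Gelfand--Shilov spaces (\cite{LCPT, Prang, Te3}) to reduce the operator equality to equality of the corresponding kernels as distributions on $\rr{3d}$. A short manipulation of the defining integral \eqref{eq:BilPdo}, using the substitution $u=x-y$, $v=x-z$ and reading the inner $(\xi,\eta)$-integral as a partial inverse Fourier transform, produces
\begin{equation*}
K_{r,t,a}(x,x-u,x-v)=(2\pi)^{-d}(\mascF_{2,3}^{-1}a)(x-ru-tv,u,v),
\end{equation*}
so the operator equality amounts to the distributional identity
\begin{equation*}
(\mascF_{2,3}^{-1}a)(x-r_1u-t_1v,u,v)=(\mascF_{2,3}^{-1}b)(x-r_2u-t_2v,u,v).
\end{equation*}

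I would then apply Lemma~\ref{lem:FTInvOpr} with $A=r_jI$, $B=t_jI$ on each side, which identifies the shifted partial inverse Fourier transform with $\mascF_{2,3}^{-1}$ applied to $e^{-i\scal{r_jD_\xi+t_jD_\eta}{D_x}}$ times the relevant symbol. Using the bijectivity of $\mascF_{2,3}$ between $(\maclS_{s_1,\sigma_2,\sigma_3}^{\sigma_1,s_2,s_3})'(\rr{3d})$ and $(\maclS_{s_1,s_2,s_3}^{\sigma_1,\sigma_2,\sigma_3})'(\rr{3d})$ furnished by Proposition~\ref{propBroadGSSpaceChar}, I would strip off the partial Fourier transform to obtain \eqref{calculitransform}; all steps being biconditionals, the converse follows automatically.

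The main obstacle is the distributional legitimacy of these manipulations when the compatibility conditions \eqref{eq:conditions} from Theorem~\ref{Thm:CalculiTransf} are not assumed in the proposition. Without those conditions, the linear pullback $(x,u,v)\mapsto(x-ru-tv,u,v)$ need not act continuously on $\maclS_{s_1,s_2,s_3}^{\sigma_1,\sigma_2,\sigma_3}(\rr{3d})$, so the shifted partial Fourier transform cannot simply be read as a pullback of test functions in that class. I would resolve this by defining $e^{-i\scal{rD_\xi+tD_\eta}{D_x}}$ through its unimodular Fourier multiplier symbol $e^{-i\scal{ru'+tv'}{\zeta}}$ on the full Fourier side, which is well defined on $(\maclS_{s_1,\sigma_2,\sigma_3}^{\sigma_1,s_2,s_3})'(\rr{3d})$ by duality; with this interpretation the kernel identity and the use of Lemma~\ref{lem:FTInvOpr} remain valid after testing against tensor products of Gelfand--Shilov test functions, and the argument goes through for the full range $s_j+\sigma_j\ge 1$.
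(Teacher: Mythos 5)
Your proposal is correct and takes essentially the same route as the paper's proof in Appendix~\ref{appendix}: both reduce the operator identity to the equality of the kernels, written as partial (inverse) Fourier transforms of the symbols at linearly shifted first arguments, and then absorb the shift into the exponential operator via Fourier inversion --- the only real difference being that you invoke Lemma~\ref{lem:FTInvOpr} directly where the paper redoes that computation inline, and that you make explicit the multiplier interpretation of $e^{-i\scal{rD_\xi+tD_\eta}{D_x}}$ on the full Fourier side, which the paper only records in the remark following the proposition. One point to double-check is the sign bookkeeping: applying Lemma~\ref{lem:FTInvOpr} with $A=r_jI$, $B=t_jI$ to $(\mascF_{2,3}^{-1}a)(x-r_ju-t_jv,u,v)$ literally produces $\mascF_{2,3}^{-1}\bigl(e^{+i\scal{r_jD_\xi+t_jD_\eta}{D_x}}a\bigr)(x,u,v)$ rather than the operator with the minus sign, so either the matrices should be taken as $-r_jI$, $-t_jI$ or the sign in \eqref{calculitransform} must be adjusted; the paper's own appendix computation contains the same compensating sign slip (writing $a(x+ry+tz,\cdot,\cdot)$ where the substitution yields $a(x-ry-tz,\cdot,\cdot)$), so this does not affect the validity of your argument relative to the paper's.
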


\par

Note that the latter equality in \eqref{calculitransform} makes sense since it is equivalent to
$$
e^{-i\scal{r_1y+t_1z}{\zeta}}\widehat a(\zeta ,y,z)
=e^{-i\scal{r_2y+t_2z}{\zeta}}\widehat b(\zeta ,y,z), \;\;\; \zeta ,y,z \in \rr {d}.
$$
Moreover, by using the similar arguments as in e.{\,}g. \cite{AbCaTo,Tr,CaTo},
it can be shown that the map $a\mapsto e^{-i\scal {ry+tz} \zeta }a$ is continuous on
$(\maclS _{s_1,\sigma_2,\sigma_3}^{\sigma_1, s_2,s_3} )'(\rr {3d})$.

The following corollary is a consequence of
\cite[Theorem 4.6]{CaTo} and Proposition \ref{Prop:CalculiTransfer}.

\par

\begin{cor}\label{Somega}
Let $s_j,\sigma _j>0$ be such that $s_j +\sigma_j \ge 1$, $(s_j,\sigma_j ) \neq (\frac 12,\frac 12)$,
$j=1,2,3$ and $r_j,t_j\in[0,1]$ be such that $r_j+ t_j\leq 1$, $j=1,2$.
If  $a,b\in (\Sigma _{s_1,\sigma_2, \sigma_3}^{\sigma_1,s_2,s_3})'(\rr {3d})$,
are such that $\op _{r_1,t_1}(a)=\op _{r_2,t_2}(b)$
Then
\begin{alignat*}{3}
a &\in \Gamma ^{\sigma_1,s_2,s_3;0}_{(\omega)}(\rr {3d})&\qquad
&\Leftrightarrow &
\qquad b &\in \Gamma ^{\sigma_1,s_2,s_3;0}_{(\omega)}(\rr {3d})
\intertext{and}
a &\in \Gamma^{\sigma_1,s_2,s_3}_{(\omega)}(\rr {3d})&\qquad
&\Leftrightarrow &
\qquad b &\in \Gamma^{\sigma_1,s_2,s_3}_{(\omega)}(\rr {3d}),
\end{alignat*}
for any given  $\omega \in \mascP _{E}(\rr {3d})$.
\end{cor}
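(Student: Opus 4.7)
The plan is to combine Proposition~\ref{Prop:CalculiTransfer} with an invariance property of the symbol classes under the exponential operators $e^{-i\scal{rD_\xi+tD_\eta}{D_x}}$. First I would use Proposition~\ref{Prop:CalculiTransfer} to convert the operator identity $\op_{r_1,t_1}(a)=\op_{r_2,t_2}(b)$ into the distributional identity
\begin{equation*}
e^{-i\scal{r_1D_\xi+t_1D_\eta}{D_x}}a
\;=\;
e^{-i\scal{r_2D_\xi+t_2D_\eta}{D_x}}b
\end{equation*}
in $(\Sigma_{s_1,\sigma_2,\sigma_3}^{\sigma_1,s_2,s_3})'(\rr{3d})$. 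Since the hypothesis excludes $(s_j,\sigma_j)=(1/2,1/2)$ for $j=1,2,3$, Theorem~\ref{Thm:CalculiTransf}(2) provides that both exponential operators are homeomorphisms of this distribution space onto itself.

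The main step is then to establish the invariance of the symbol classes $\Gamma^{\sigma_1,s_2,s_3;0}_{(\omega)}(\rr{3d})$ and $\Gamma^{\sigma_1,s_2,s_3}_{(\omega)}(\rr{3d})$ under each $e^{-i\scal{rD_\xi+tD_\eta}{D_x}}$ with $r,t\in[0,1]$ and $r+t\le 1$. In the linear setting this is precisely \cite[Theorem~4.6]{CaTo}. To transfer it to the present three-variable framework, I would exploit the short-time Fourier transform characterization of these symbol classes, announced for the next subsection: membership of $a$ in $\Gamma^{\sigma_1,s_2,s_3;0}_{(\omega)}$ (respectively in $\Gamma^{\sigma_1,s_2,s_3}_{(\omega)}$) is equivalent to suitable exponential-type decay of $|V_\phi a|/\omega$ for some (resp. every) Gelfand--Shilov window $\phi$ adapted to the indices $\sigma_1,s_2,s_3$. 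On the Fourier side the operator acts as multiplication by the phase $e^{-i\scal{ry+tz}{\zeta}}$, whose anisotropic Gelfand--Shilov behaviour, together with the homeomorphism properties of Theorem~\ref{Thm:CalculiTransf} and Proposition~\ref{propBroadGSSpaceChar}, ensures that the STFT estimates describing the symbol class are preserved, up to a permissible change of window.

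Granted this invariance, the displayed identity shows that $a\in\Gamma^{\sigma_1,s_2,s_3;0}_{(\omega)}(\rr{3d})$ if and only if its image under $e^{-i\scal{r_1D_\xi+t_1D_\eta}{D_x}}$ lies there, and this image equals the image of $b$ under $e^{-i\scal{r_2D_\xi+t_2D_\eta}{D_x}}$, which in turn belongs to the class if and only if $b$ does; the argument for the Roumieu case $\Gamma^{\sigma_1,s_2,s_3}_{(\omega)}$ is identical. The main obstacle is therefore the bilinear adaptation of \cite[Theorem~4.6]{CaTo}: once the STFT characterization from the next subsection is in place and the action of the phase factor $e^{-i\scal{ry+tz}{\zeta}}$ on it is controlled by a Gelfand--Shilov estimate, both conclusions follow formally from Theorem~\ref{Thm:CalculiTransf} and Proposition~\ref{Prop:CalculiTransfer}.
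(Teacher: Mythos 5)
Your proposal is correct and follows essentially the same route as the paper: the paper derives the corollary precisely by combining Proposition \ref{Prop:CalculiTransfer} (to turn the operator identity into the identity $e^{-i\scal{r_1D_\xi+t_1D_\eta}{D_x}}a=e^{-i\scal{r_2D_\xi+t_2D_\eta}{D_x}}b$) with the invariance of the symbol classes under these exponential operators, quoted from \cite[Theorem 4.6]{CaTo} and later established in this setting as Theorem \ref{thm:CalculiTransfbis} via the STFT characterization, exactly as you sketch. The chain $a\in\Gamma\Leftrightarrow e^{-i\scal{r_1D_\xi+t_1D_\eta}{D_x}}a\in\Gamma\Leftrightarrow e^{-i\scal{r_2D_\xi+t_2D_\eta}{D_x}}b\in\Gamma\Leftrightarrow b\in\Gamma$ is the intended argument.
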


\par

\par

Passages between different kinds of pseudo-differential calculi have been
considered before  \cite{Ho1,Tr}.
On the other hand, for the bilinear pseudo-differential calculi,
it seems that the representation
$a\mapsto \op _{r,t}(a)$ for $(r,t)\in [0,1]\times[0,1]$ such that $r+t\leq 1$,
has not been considered so far.

\par

\subsection{Gevrey-type symbol classes characterizations}  \label{subsec2.2}.
Our first result concerns the Roumieu case of symbols in
$\Gamma ^{ \sigma,s,s}_{(\omega )} (\rr {3d})$. It
can be deduced from \cite[Proposition 4.3]{CaTo}, see also \cite[Proposition 2.4]{AbCaTo}.
For the sake of completeness, we give the proof.

\par

\begin{prop}\label{Prop:CharGammaSTFT}
Let  $s_j ,\sigma _j >0$,  $j=1,2,3$, be such that
the conditions in \eqref{eq:conditions} hold, let
$\omega \in \mascP _{s_1,\sigma_2,\sigma_3} ^0(\rr {3d})$
and let $a$ be a Gelfand-Shilov distribution on $\rr {3d}$.

Then the following conditions are equivalent:	
\begin{enumerate}
\item $ a \in \Gamma ^{\sigma_1 ,s_2, s_3}_{(\omega )} (\rr {3d})$, that is,
$a\in C^\infty (\rr {3d})$ and
$$
|\partial_x ^\alpha \partial _\xi ^\beta \partial _\eta^\gamma a(x, \xi, \eta)|
\lesssim
h ^{|\alpha +\beta+ \gamma|} \alpha !^{\sigma_1} \beta!^{s_2} \gamma!^{s_3} \omega (x, \xi, \eta), \;\;\;
x, \xi, \eta \in \rr {d},
$$
for every $ \alpha,\beta,\gamma\in \nn{d}$ and some $h >0$;

\vrum

\item For every  $\phi \in \maclS _{s_1,\sigma_2,\sigma_3}^{\sigma_1 ,s_2, s_3}(\rr {3d})\setminus 0$,
there exist  constants  $h,R>0$ such that for every $\alpha,\beta,\gamma\in \nn{d}$,
\begin{multline}\label{stftconda}
|\partial _x^\alpha \partial _\xi ^\beta \partial _\eta ^\gamma
\prn{e^{i(\scal x\zeta +\scal y\xi +\scal z\eta )}
	V_\phi a(x,\xi, \eta, \zeta, y, z)}|
\\[1ex]
\lesssim
h^{|\alpha+\beta+\gamma|} \alpha !^{\sigma_1} \beta!^{s_2} \gamma!^{s_3}
\omega (x, \xi, \eta)e^{-R (|\zeta|^{\frac 1{\sigma_1}}+
	|y |^{\frac 1{s_2}}+|z |^{\frac 1{s_3}})},
\end{multline}
$x,\xi, \eta, \zeta, y, z\in \rr d$.

\vrum

\item For every $\phi \in \maclS _{s_1,\sigma_2,\sigma_3}^{\sigma_1 ,s_2, s_3}(\rr {3d})\setminus 0$, there exist a constant  $R >0$ such that
\begin{equation}\label{stftcondaA}
|	V_\phi a(x,\xi, \eta, \zeta, y, z)|
\lesssim
\omega (x,\xi,\eta)e^{-R (|\zeta|^{\frac 1{\sigma_1}}+
	|y |^{\frac 1{s_2}}+|z |^{\frac 1{s_3}})}, \;\;\;
x,\xi, \eta, \zeta, y, z\in \rr d.
\end{equation}
\end{enumerate}		
\end{prop}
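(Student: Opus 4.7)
The plan is to establish the cycle $(1) \Rightarrow (2) \Rightarrow (3) \Rightarrow (1)$, writing throughout $X=(x,\xi,\eta)$ and $\Xi=(\zeta,y,z)\in \rr {3d}$. The backbone is the identity
$$
e^{i\scal X \Xi} V_\phi a(X, \Xi) = \int a(X+W) \overline{\phi(W)} e^{-i\scal W \Xi}\, dW,
$$
obtained by substituting $W=Y-X$ in the definition of $V_\phi$.

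For $(1)\Rightarrow(2)$, I would differentiate $\partial _X^A$ under the integral (derivatives falling on $a(X+W)$), then integrate by parts in $W$ to convert $\Xi^B$ into $\partial_W^B$ acting on the integrand. Leibniz distributes the $B$-derivatives between $a$ and $\phi$; I would then bound via (i) the symbol estimate (1) on $a$, (ii) the moderateness $\omega(X+W)\lesssim \omega(X)e^{r(|w_1|^{1/s_1}+|w_2|^{1/\sigma_2}+|w_3|^{1/\sigma_3})}$ provided by $\omega\in\mascP_{s_1,\sigma_2,\sigma_3}^0$, and (iii) the Gelfand-Shilov bounds on $\phi\in\maclS_{s_1,\sigma_2,\sigma_3}^{\sigma_1,s_2,s_3}$ furnished by Proposition \ref{prop:GScharac}, whose decay indices $s_1,\sigma_2,\sigma_3$ match (ii), so the $W$-integral converges for small $r$. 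Splitting factorials $(\alpha+B_1^{(1)})!^{\sigma_1}\le 2^{\sigma_1|\alpha+B_1^{(1)}|}\alpha!^{\sigma_1}B_1^{(1)}!^{\sigma_1}$ coordinate-wise, using $B_1^{(i)}!\,B_2^{(i)}!\le B^{(i)}!$, and summing $\sum_{B_1+B_2=B}\binom{B}{B_1}=2^{|B|}$ yields, for every $B$,
$$
|\Xi^B \partial_X^A[e^{i\scal X \Xi}V_\phi a(X,\Xi)]|
\lesssim
\omega(X)\, H^{|A|} K^{|B|}\,
\alpha!^{\sigma_1}\beta!^{s_2}\gamma!^{s_3}\,
B^{(1)}!^{\sigma_1} B^{(2)}!^{s_2} B^{(3)}!^{s_3}.
$$
Converting this polynomial-in-$\Xi$ estimate to the exponential decay in \eqref{stftconda} is the standard passage, mirroring the final step in the proof of Lemma \ref{GSlemma}. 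The implication $(2)\Rightarrow(3)$ is immediate on taking $\alpha=\beta=\gamma=0$ and using $|e^{i\scal X \Xi}|=1$.

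For $(3)\Rightarrow(1)$, I would invoke the STFT inversion formula: choosing $\psi\in \maclS_{s_1,\sigma_2,\sigma_3}^{\sigma_1,s_2,s_3}(\rr {3d})$ with $(\psi,\phi)\ne 0$,
$$
a(Y)=\frac{1}{(\psi,\phi)}\iint V_\phi a(X,\Xi)\, e^{i\scal Y \Xi}\,\psi(Y-X)\, dX\, d\Xi,
$$
and differentiate $\partial^A$ under the integral. Leibniz produces $\sum_{A_1+A_2=A}\binom{A}{A_1}(i\Xi)^{A_1}$ times $\partial^{A_2}\psi(Y-X)$. Factors $|\Xi^{A_1}|$ are absorbed against the decay of $V_\phi a$ supplied by (3) through the standard inequality $\sup_{t\ge 0} t^N e^{-Rt^{1/\tau}}\lesssim (C/R)^{N\tau}N!^\tau$, applied coordinate-wise with $\tau=\sigma_1,s_2,s_3$. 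Gevrey bounds on $\partial^{A_2}\psi$ (Proposition \ref{prop:GScharac}) together with its decay in $Y-X$ and the moderate transfer $\omega(X)\lesssim \omega(Y) e^{r(|Y_1-X_1|^{1/s_1}+\dots)}$ render the double integral absolutely convergent, and the remaining calculation reproduces the Gevrey-H\"ormander estimate in (1).

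The main obstacle is the anisotropic bookkeeping in $(1)\Rightarrow(2)$: splitting each coordinate's factorials so that the three Gevrey indices $\sigma_1,s_2,s_3$ attach to the correct multi-indices both in the derivative part ($\alpha,\beta,\gamma$) and in the exponential-decay part (in $B^{(1)},B^{(2)},B^{(3)}$, hence in $\zeta,y,z$), while simultaneously using that the Beurling weight class $\mascP_{s_1,\sigma_2,\sigma_3}^0$ containing $\omega$ is exactly matched to the Gelfand-Shilov decay of the window $\phi$, so the moderateness exponent can be chosen small enough to be dominated by $\phi$'s decay. The rest consists of routine absolute-convergence estimates typical for Gelfand-Shilov and modulation-space analysis.
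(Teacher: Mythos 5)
Your proposal is correct and follows essentially the same route as the paper: for $(1)\Rightarrow(2)$ the paper likewise works with $F_a(X,Y)=a(X+Y)\phi(Y)$, shows the normalized derivatives form a bounded family in $\maclS_{s_1,\sigma_2,\sigma_3}^{\sigma_1,s_2,s_3}$ in the $Y$-variable (your integration by parts in $W$ is just this Fourier-decay characterization carried out explicitly), and for $(3)\Rightarrow(1)$ it uses the same STFT inversion formula, Leibniz expansion, the bound $|\Xi^B e^{-R(\cdot)}|\lesssim h^{|B|}B!^{(\cdot)}e^{-R(\cdot)/2}$, and the choice of a small moderateness exponent $r$ against the window's decay. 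The implication $(2)\Rightarrow(3)$ is handled identically in both.
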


\par

\begin{proof}
That (2) implies (3) is immediate, since \eqref{stftconda} is equal to \eqref{stftcondaA} when $\alpha =\beta =\gamma = 0.$

Let $X=(x,\xi,\eta),\ Y=(x_1,\xi_1,\eta_1 ), Z=(\zeta,y,z)\in \rr{3d}$,
and set
$$
F_a(X,Y) =  a (X+Y)\phi(Y) =
a(x+x_1,\xi +\xi_1, \eta+ \eta_1 )\phi(x_1, \xi_1, \eta_1).
$$
Assume that (1) holds true. By the Leibniz rule,
\eqref{eq:omegaEst} and Proposition \ref{prop:GScharac} we obtain
\begin{multline*}
|\partial _x^\alpha \partial _\xi ^\beta \partial _\eta^\gamma
F_a(x,\xi, \eta, x_1, \xi_1, \eta_1)|
\\[1ex]
\lesssim
h^{|\alpha + \beta +\gamma|}\alpha !^{\sigma_1} \beta!^{s_2} \gamma!^{s_3}
\omega(x,\xi,\eta )e^{-R(|x_1|^{\frac 1{s_1}}+
|\xi_1 |^{\frac 1{\sigma_2}}+|\eta_1 |^{\frac 1{\sigma_3}})},
\end{multline*}
$x,\xi, \eta, x_1, \xi_1, \eta_1  \in \rr{d} $, for some constants $h,R >0$.

It follows that the set
$$
\Big \{ G_{a, h, X}(Y) \;\; | \;\;
 G_{a, h,x,\xi, \eta }( x_1, \xi_1, \eta_1 ) =
 \frac{\partial _x^\alpha \partial _\xi ^\beta
	\partial _\eta^\gamma F_a(x,\xi, \eta, x_1, \xi_1, \eta_1 )}
{h^{|\alpha +\beta +\gamma|} \alpha !^{\sigma_1} \beta!^{s_2} \gamma!^{s_3} \omega(x,\xi, \eta )} \Big \}
$$
is bounded in $ \maclS _{s_1,\sigma_2,\sigma_3}^{\sigma_1 ,s_2, s_3}(\mathbf{R}^{3d})$.
If $\mascF _2 F_a$ denotes  the partial Fourier transform of
$F_a(X, Y)$ with respect to the $Y$-variable, then we get
\begin{multline*}
|\partial _x^\alpha \partial _\xi ^\beta \partial_\eta^\gamma
(\mascF _2 F_a)(x,\xi, \eta, \zeta, y,z)|
\\[1ex]
\lesssim
h^{|\alpha +\beta +\gamma |} \alpha !^{\sigma_1} \beta!^{s_2} \gamma!^{s_3}
\omega(x,\xi, \eta )e^{-R(|y|^{\frac 1{\sigma_1}}+
	|z|^{\frac 1{s_2}} + |\zeta|^{\frac 1{s_3}})},
\end{multline*}
$x,\xi, \eta, x_1, \xi_1, \eta_1  \in \rr{d} $, for some constants  $h,R>0$. This, together with the Leibnitz rule
applied to
$ \partial _x^\alpha \partial _\xi ^\beta \partial _\eta ^\gamma
\prn{e^{i(\scal x\zeta +\scal y\xi +\scal z\eta )}
	V_\phi a(x,\xi, \eta, \zeta, y, z)}$ gives (2).

\par

Assume now that (3) holds. By the inversion formula we get
\begin{equation}\label{Eq:STFTInversionFormula}
a(X) = \frac{ (2\pi)^{-\frac {3d}2}}{
\nm \phi{L^2}^{2} } \iint V_\phi a(Y,Z)
\phi (X-Y)e^{i\scal {X}{Z}}\,dYdZ, \;\;\; X \in \rr {3d},
\end{equation}
in the weak sense. Since $\phi\in  \maclS _{s_1,\sigma_2,\sigma_3}^{\sigma_1 ,s_2, s_3}(\rr{3d})$
we notice that
\begin{align*}
(X,Y,Z ) &\mapsto  V_\phi a(Y,Z)\phi (X-Y)e^{i\scal {X}{Z}}
\intertext{is a smooth map, and}
(Y,Z) &\mapsto Z^{\bs{\alpha}}
V_\phi a(Y,Z )\partial ^{\bs{\beta}} \phi (X-Y)e^{i\scal XZ}
\end{align*}
is an integrable function for every $X\in \rr{3d}$,
$\bs{\alpha}=(\alpha_1,\alpha_2,\alpha_3)\in \nn{3d}$,
and $\bs{\beta}=(\beta_1,\beta_2,\beta_3)\in \nn{3d}$
in view of (3). Hence the derivatives of $a$ in \eqref{Eq:STFTInversionFormula} satisfy the following estimates:
\begin{multline*}
|\partial ^{\boldsymbol{\alpha}} a(X)|
\le
\sum _{\bs{\beta} \le \bs{\alpha}} {\bs{\alpha} \choose \bs{\beta}}
\iint |Z ^{\bs{\beta}} V_\phi a(Y,Z )
(\partial ^{\bs{\alpha} -\bs{\beta} }\phi )(X-Y)|\, dYdZ
\\[1ex]
\lesssim
\sum _{\bs{\beta} \le \bs{\alpha}} {\bs{\alpha} \choose \bs{\beta}}
\iint |Z^{\bs{\beta}}
\omega (Y)e^{-R (|\zeta|^{\frac 1{\sigma_1}}+
|y |^{\frac 1{s_2}}+|z |^{\frac 1{s_3}})}
(\partial ^{\bs{\alpha} -\bs{\beta} }\phi )(X-Y)|
\, dYdZ
\\[1ex]
\lesssim
\sum _{\bs{\beta} \le \bs{\alpha}} {\bs{\alpha} \choose \bs{\beta}}
h_2 ^{|\bs{\alpha}- \bs{\beta}|} (\alpha _1- \beta _1 )!^{\sigma_1}
(\alpha _2- \beta _2 )!^{s_2} (\alpha _3- \beta _3 )!^{s_3}
\\[1ex]
\times \iint |Z^{\bs{\beta}}|
e^{-R  (|\zeta|^{\frac 1{\sigma_1}}+
|y |^{\frac 1{s_2}}+|z |^{\frac 1{s_3}})}
\omega (Y)
e^{-h_1(|x-x_1|^{\frac 1{s_1}}+|\xi-\xi _1|^{\frac 1{\sigma_2}}+
|\eta-\eta _1|^{\frac 1{\sigma_3}})}\, dYdZ,
\end{multline*}
$X\in  \rr{3d}$, for some constants $h_1, h_2>0$, and we used  Lemma \ref{GSlemma}.
For any
$\bs{\beta} \in \nn{3d}$,
$\sigma, s_j >0$ such that $s_j + \sigma\ge 1$, $j=1,2$, and
$h_2, R >0$, it holds
$$
|\zeta ^{\beta_1 }y^{\beta_2}z^{\beta_3}
e^{-R (|\zeta|^{\frac 1\sigma}+|y |^{\frac 1s}+|z |^{\frac 1s})}|
\lesssim h_2^{|\bs{\beta}|}\beta_1 !^{\sigma_1}
\beta _2 !^{s_2}\beta _3 !^{s_3}
e^{-\frac R2 (|\zeta|^{\frac 1{\sigma_1}}+
	|y |^{\frac 1{s_2}}+|z |^{\frac 1{s_3}})},
$$
$ \zeta,y,z \in \rr d $, so that
\begin{multline}\label{Eq:GSTypeEst1}
|\partial ^{\bs{\alpha}} a(X)|
\\[1ex]
\lesssim h_2 ^{|\bs{\alpha} |}
\sum _{\bs{\beta} \le \bs{\alpha}}
{\bs{\alpha} \choose \bs{\beta}}
(\beta _1 !(\alpha _1-\beta _1)!)^{\sigma_1}
(\beta _2 !(\alpha _2-\beta _2)!)^{s_2}
(\beta _3 !(\alpha _3-\beta _3)!)^{s_3}
\\[1ex]
\times \iint e^{-\frac R2 (|\zeta|^{\frac 1{\sigma_1}}+
	|y |^{\frac 1{s_2}}+|z |^{\frac 1{s_3}})}
\omega (Y)
e^{-h_1(|x-x_1|^{\frac 1{s_1}}+|\xi-\xi _1|^{\frac 1{\sigma_2}}+
	|\eta-\eta _1|^{\frac 1{\sigma_3}})} \, dYdZ
\\[1ex]
\lesssim (2h_2)^{|\bs{\alpha} |}
\alpha _1!^{\sigma_1} \alpha _2!^{s_2} \alpha _3!^{s_3}
\\[1ex]
\times
\int \omega(X+(Y-X))
e^{-h_1(|x-x_1|^{\frac 1{s_1}}+|\xi-\xi _1|^{\frac 1{\sigma_2}}+
	|\eta-\eta _1|^{\frac 1{\sigma_3}})} \, dY, \;\;\; X \in  \rr{3d}.
\end{multline}

\par

Since $\omega \in \mascP _{s_1,\sigma_2, \sigma_3 } ^0$,
that is \eqref{eq:omegaEst} holds for every $r>0$, by choosing
$r\in(0,{h_1}/{2})$,
from \eqref{Eq:GSTypeEst1} it follows that
\begin{equation*}
|\partial ^{\bs{\alpha}} a(X)|
\lesssim (2h_2)^{|\bs{\alpha}|}
\alpha _1!^{\sigma_1} \alpha _2!^{s_2} \alpha _3!^{s_3}
\omega (X), \;\;\; X \in  \rr{3d},
\end{equation*}
for some constant $h_2>0$ (and we conclude that \eqref{Eq:STFTInversionFormula} holds also in the pointwise sense).
Therefore (3) implies (1) and the result follows.
\end{proof}

\par

The Beurling case follows by similar arguments as in the proof of
Proposition \ref{Prop:CharGammaSTFT}.

\par

\begin{prop}\label{prop:GammaOmegaChar}
Let  $s_j ,\sigma _j >0$,  $(s_j,\sigma_j )\neq (\frac 12,\frac 12)$,  $j=1,2,3$, and let
the conditions in \eqref{eq:conditions} hold. Also,
let $\omega \in \mascP _{s_1,\sigma_2, \sigma_3 } (\rr {3d})$
and let $a$ be a Gelfand-Shilov distribution on $\rr {3d}$.
Then the following conditions are equivalent:	
\begin{enumerate}
\item $ a\ in \in \Gamma ^{\sigma_1 ,s_2, s_3; 0}_{(\omega )} (\rr {3d})$, i.e.
$a\in C^\infty (\rr {3d})$ and
$$
|\partial_x ^\alpha \partial _\xi ^\beta \partial _\eta^\gamma a(x, \xi, \eta)|
\lesssim
h ^{|\alpha +\beta+ \delta|}\alpha !^{\sigma_1} \beta!^{s_2} \gamma!^{s_3} \omega (x, \xi, \eta), \;\;\;
x, \xi, \eta \in \rr {d},
$$
for every $ \alpha,\beta,\gamma\in \nn{d}$ and for every $h >0$;

\vrum

\item For every
$\phi \in \Sigma _{s_1,\sigma_2,\sigma_3}^{\sigma_1 ,s_2, s_3} (\rr {3d}) \setminus 0$, $h,R>0$ and $\alpha,\beta,\gamma\in \nn{d}$, it holds
\begin{multline*}
|\partial _x ^\alpha \partial _\xi ^\beta \partial _\eta ^\gamma
\prn{e^{i(\scal x\zeta +\scal y\xi +\scal z\eta )}
	V_\phi a(x,\xi, \eta, \zeta, y, z)}|
\\[1ex]
\lesssim
h^{|\alpha+\beta+\gamma|} \alpha !^{\sigma_1} \beta!^{s_2} \gamma!^{s_3}
\omega (x, \xi, \eta)e^{-R (|\zeta|^{\frac 1{\sigma_1}}+	|y |^{\frac 1{s_2}}+|z |^{\frac 1{s_3}})},
\end{multline*}
$x,\xi, \eta, \zeta, y, z\in \rr d$.
\vrum

\item  For every  $\phi \in \Sigma _{s_1,\sigma_2,\sigma_3}^{\sigma_1 ,s_2, s_3} (\rr {3d}) \setminus 0$, and $R >0$ it holds
$$
|V_\phi a(x,\xi, \eta, \zeta, y, z)|
\lesssim
\omega (x,\xi,\eta)e^{-R (|\zeta|^{\frac 1{\sigma_1}}+
	|y |^{\frac 1{s_2}}+|z |^{\frac 1{s_3}})}, \;\;\;
x,\xi, \eta, \zeta, y, z\in \rr d.
$$
\end{enumerate}		
\end{prop}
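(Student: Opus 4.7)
The plan is to mirror the three-step strategy employed in the proof of Proposition \ref{Prop:CharGammaSTFT}, namely to establish $(2)\Rightarrow(3)$, $(1)\Rightarrow(2)$, and $(3)\Rightarrow(1)$, carefully adjusting for the change of quantifiers (from ``some'' to ``every'' on $h$ and $R$) and of function spaces (from $\maclS$ to $\Sigma$ for the window, and from $\mascP^0_{s_1,\sigma_2,\sigma_3}$ to $\mascP_{s_1,\sigma_2,\sigma_3}$ for the weight). The implication $(2)\Rightarrow(3)$ is immediate upon specializing $\alpha=\beta=\gamma=0$.

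For $(1)\Rightarrow(2)$, I would reuse the auxiliary function $F_a(X,Y)=a(X+Y)\phi(Y)$ with $X=(x,\xi,\eta)$ and $Y=(x_1,\xi_1,\eta_1)$. Applying the Leibniz rule to the $X$-derivatives of $F_a$ and invoking (1) with arbitrary $h>0$, one obtains bounds of the form $h^{|\alpha+\beta+\gamma|}\alpha!^{\sigma_1}\beta!^{s_2}\gamma!^{s_3}\omega(X+Y)|\phi(Y)|$. The moderateness estimate \eqref{eq:omegaEst} for $\omega\in\mascP_{s_1,\sigma_2,\sigma_3}$ introduces a fixed exponential factor $e^{r(|x_1|^{1/s_1}+|\xi_1|^{1/\sigma_2}+|\eta_1|^{1/\sigma_3})}$, which, since $\phi\in\Sigma_{s_1,\sigma_2,\sigma_3}^{\sigma_1,s_2,s_3}$, is absorbed by the arbitrary Gelfand-Shilov decay of $\phi$ given by Lemma \ref{GSlemma} and Proposition \ref{prop:GScharac}, leaving an arbitrarily strong residual decay in $Y$. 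Taking the partial Fourier transform $\mascF_2 F_a$ then converts this decay into arbitrary-rate decay in the $(\zeta,y,z)$ variables, and the Leibniz rule applied to $e^{i(\scal x\zeta+\scal y\xi+\scal z\eta)}V_\phi a$ finally yields \eqref{stftconda} for every $h,R>0$.

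For $(3)\Rightarrow(1)$, I would start again from the STFT inversion formula \eqref{Eq:STFTInversionFormula}, now with a window $\phi\in\Sigma_{s_1,\sigma_2,\sigma_3}^{\sigma_1,s_2,s_3}\setminus 0$. Differentiating under the integral sign and expanding with the Leibniz rule, $|\partial^{\boldsymbol\alpha}a(X)|$ is controlled by a sum in which the factor $Z^{\boldsymbol\beta}$ is tamed by the decay of $V_\phi a$ from (3) (available for every $R$), and $\partial^{\boldsymbol\alpha-\boldsymbol\beta}\phi(X-Y)$ is controlled by Lemma \ref{GSlemma} with arbitrary parameters since $\phi\in\Sigma$. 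Replacing $\omega(Y)$ by $\omega(X)$ via \eqref{eq:omegaEst} introduces the fixed moderate rate $r$, which is dominated by the arbitrarily strong decay afforded by (3) and by $\phi$; after carrying out the resulting convergent integrals as in \eqref{Eq:GSTypeEst1}, one concludes $|\partial^{\boldsymbol\alpha}a(X)|\lesssim h^{|\boldsymbol\alpha|}\alpha_1!^{\sigma_1}\alpha_2!^{s_2}\alpha_3!^{s_3}\omega(X)$ with $h>0$ arbitrary, which is precisely (1) in the Beurling sense.

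The main obstacle is the careful bookkeeping of quantifiers: in the Roumieu case the weight class $\mascP^0$ supplies moderateness with any prescribed rate, so the ``exists $h$'', ``exists $R$'' clauses propagate cleanly; here one must maintain the ``for every $h$'' and ``for every $R$'' clauses despite $\omega$ being moderate only for a single rate $r$. The key mechanism that makes the argument go through is that $\phi\in\Sigma$—in contrast to $\phi\in\maclS$ used in the Roumieu case—provides decay with arbitrarily large prefactor, so at each step one may choose the decay rate strictly larger than $r$, absorb the moderate factor, and still retain an arbitrarily strong residual exponential. This is what uniformly converts ``some'' into ``every'' throughout the three implications.
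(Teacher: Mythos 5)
Your proposal is correct and follows exactly the route the paper intends: the paper gives no separate proof of this proposition, stating only that ``the Beurling case follows by similar arguments as in the proof of Proposition~\ref{Prop:CharGammaSTFT}'', and your elaboration is a faithful and accurate working-out of that claim. In particular, you correctly identify the one genuinely delicate point --- that the roles of the quantifiers swap (the weight is now moderate only for \emph{some} rate $r$, while the window $\phi\in\Sigma_{s_1,\sigma_2,\sigma_3}^{\sigma_1,s_2,s_3}$ supplies decay and derivative bounds for \emph{every} rate), so that in both \eqref{eq:omegaEst}-absorption steps and in the final estimate $(2h_2)^{|\boldsymbol\alpha|}$ one chooses the window's parameters after the weight's fixed $r$, preserving the ``for every $h,R$'' conclusions.
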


In the next result we consider the Beurling case.
It gives a description of  the symbol
class $\Gamma ^{\sigma_1,s_2,s_3;0}_{(\omega)} (\rr {3d})$
in terms of modulation spaces $M^{\infty,q}_{(1/\omega _R)} (\rr {3d})$
for $q \in[1,\infty]$ and $\omega_R$ defined in
\eqref{eq:omegaR} below. To prove Proposition \ref{Prop:GammaModIdent*}
we follow arguments analogous to those used in the proofs of \cite[Proposition 3.5]{AbCaTo} and \cite[Proposition 4.4]{CaTo}.

\par

\begin{prop}\label{Prop:GammaModIdent*}
Let $R>0$, $q\in [1,\infty ]$, $s_j,\sigma _j>0$,  $(s_j,\sigma_j ) \neq (\frac 12,\frac 12)$, $j=1,2,3$,
and let the conditions in \eqref{eq:conditions} hold.
Also, let $\phi, \phi_0 \in \Sigma _{s_1,\sigma_2, \sigma_3}^{\sigma_1,s_2 ,s_3} (\rr {3d})\setminus 0$,
$\omega \in \mascP _{s_1,\sigma_2, \sigma_3}(\mathbf{R}^{3d})$,
and let
\begin{equation}\label{eq:omegaR}
\omega _R(x,\xi, \eta, \zeta,y,z) =
\omega(x,\xi,\eta) e^{-R(|\zeta|^{\frac 1{\sigma_1}}+|y|^{\frac 1{s_2}}+|z|^{\frac 1{s_3}})}.
\end{equation}
Then
\begin{equation}\label{iden*}
\Gamma ^{\sigma_1,s_2,s_3;0} _{(\omega)} (\rr {3d})=
\bigcap _{R>0}\sets {a\in
	(\Sigma _{s_1,\sigma_2, \sigma_3}^{\sigma_1,s_2,s_3})'
	(\rr {3d})}{\nm {\omega	_R^{-1}V_\phi a}{L^{\infty ,q}
		(\rr {3d}\times\rr{3d})} <\infty }.
\end{equation}
\end{prop}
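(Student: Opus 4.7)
The plan is to reduce the proposition to Proposition \ref{prop:GammaOmegaChar}, whose equivalence between (1) and (3) already gives pointwise exponential STFT estimates characterizing $\Gamma^{\sigma_1,s_2,s_3;0}_{(\omega)}(\rr{3d})$, and then to convert those pointwise bounds to and from the $L^{\infty,q}$-integrability conditions built into the set on the right-hand side of \eqref{iden*}. Denote $X=(x,\xi,\eta)$ and $Z=(\zeta,y,z)$, so that $\omega_R(X,Z)=\omega(X)e^{-R(|\zeta|^{1/\sigma_1}+|y|^{1/s_2}+|z|^{1/s_3})}$.

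For the inclusion ``$\subseteq$'' in \eqref{iden*}, I would fix an arbitrary $R>0$, choose $R'>R$, and apply Proposition \ref{prop:GammaOmegaChar}(3) with $R'$ in place of $R$ to the window $\phi\in\Sigma_{s_1,\sigma_2,\sigma_3}^{\sigma_1,s_2,s_3}(\rr{3d})\setminus 0$. This gives
\[
|\omega_R(X,Z)^{-1}V_\phi a(X,Z)|\lesssim e^{-(R'-R)(|\zeta|^{1/\sigma_1}+|y|^{1/s_2}+|z|^{1/s_3})},
\]
whose right-hand side is independent of $X$ and belongs to $L^q(\rr{3d})$ for every $q\in[1,\infty]$; taking $\sup_X$ and then the $L^q_Z$-norm is therefore finite, proving membership in the set on the right of \eqref{iden*}.

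For the reverse inclusion ``$\supseteq$'', the key tool is the STFT reproducing formula for the fixed window $\phi$, which gives the modulus-wise convolution bound
\[
|V_\phi a(X,Z)|\lesssim \iint |V_\phi a(X',Z')|\,|V_\phi \phi(X-X',Z-Z')|\,dX'\,dZ'.
\]
Since $\phi\in\Sigma_{s_1,\sigma_2,\sigma_3}^{\sigma_1,s_2,s_3}(\rr{3d})$, Proposition \ref{Prop:STFTGelfand2}(2) yields arbitrarily fast exponential decay of $|V_\phi\phi|$ in all six variables. Using the hypothesis $\|\omega_R^{-1}V_\phi a\|_{L^{\infty,q}}<\infty$ I split the integrand as $|V_\phi a(X',Z')|\le \omega(X')e^{-R(|\zeta'|^{1/\sigma_1}+|y'|^{1/s_2}+|z'|^{1/s_3})}g_R(Z')$, where $g_R(Z')=\sup_{X'}\omega_R(X',Z')^{-1}|V_\phi a(X',Z')|\in L^q$. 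I then transfer the decay from $(X',Z')$ to $(X,Z)$ via moderateness of $\omega$ from $\mascP_{s_1,\sigma_2,\sigma_3}$ (which gives $\omega(X')\lesssim\omega(X)e^{c(|x-x'|^{1/s_1}+|\xi-\xi'|^{1/\sigma_2}+|\eta-\eta'|^{1/\sigma_3})}$) and via the sub-additivity estimate $|a|^{1/s}\le C_s(|a-b|^{1/s}+|b|^{1/s})$. All resulting ``cost'' factors in $|X-X'|$ and $|Z-Z'|$ are absorbed by choosing the effective decay rate $L$ in $|V_\phi\phi|$ large enough. What remains under the integral is a convolution of $g_R\in L^q_{Z'}$ with an $L^{q'}_{Z'}$-function (uniformly in $X$), so Hölder gives a pointwise bound independent of $(X,Z)$, yielding
\[
|V_\phi a(X,Z)|\lesssim \omega(X)\,e^{-R''(|\zeta|^{1/\sigma_1}+|y|^{1/s_2}+|z|^{1/s_3})}
\]
for any prescribed $R''>0$, after selecting $R$ and $L$ accordingly. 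Applying Proposition \ref{prop:GammaOmegaChar} then places $a$ in $\Gamma^{\sigma_1,s_2,s_3;0}_{(\omega)}(\rr{3d})$, and the auxiliary window $\phi_0$ enters via the usual change-of-window argument, ensuring the estimate is independent of the particular $\phi$ fixed at the outset.

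The main obstacle will be the bookkeeping in the reverse inclusion: one must calibrate $R$, the target rate $R''$, the moderateness constants of $\omega$, and the sub-additivity constants $C_{1/\sigma_1},C_{1/s_2},C_{1/s_3}$ simultaneously with the decay rate $L$ of $V_\phi\phi$, so that every ``cost'' factor produced by shifting variables is strictly dominated by the window's exponential. Once the hierarchy $L\gg R\gg R''\cdot(\text{constants})$ is chosen correctly, Hölder's inequality closes the argument and produces the pointwise STFT decay at the rate $R''$ required by Proposition \ref{prop:GammaOmegaChar}.
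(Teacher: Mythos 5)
Your proof is correct and follows essentially the same route as the paper: both directions rest on Proposition \ref{prop:GammaOmegaChar} combined with the change-of-window convolution inequality of \cite[Lemma 11.3.3]{Gro}, the super-exponential decay of $V_\phi\phi$ from Proposition \ref{Prop:STFTGelfand2}, and a H\"older/Young estimate to pass between the pointwise and $L^{\infty,q}$ formulations. The only cosmetic difference is that you obtain the inclusion of $\Gamma^{\sigma_1,s_2,s_3;0}_{(\omega)}$ into the right-hand side directly by integrating the pointwise bound with a decay margin $R'-R$, whereas the paper routes this through a Minkowski-inequality estimate comparing the $L^{\infty,q}$ and $L^{\infty}$ norms.
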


\par

\begin{proof}
When $q =\infty$, \eqref{iden*} becomes
$$
\Gamma ^{\sigma_1,s_2,s_3;0}_{(\omega)}=
\bigcap_{R>0} M^\infty_{(1/\omega _R)}
(\mathbf{R}^{3d}),
$$
which is a straightforward consequence of Proposition \ref{prop:GammaOmegaChar}.
Therefore it is enough to prove that
\begin{multline*}
\bigcap_{R>0} M^\infty_{(1/\omega _R)}
(\mathbf{R}^{3d}) 
\\[1ex]
=\bigcap _{R>0}\sets {a\in
	(\Sigma _{s_1,\sigma_2, \sigma_3}^{\sigma_1,s_2,s_3})'
	(\rr {3d})}{\nm {\omega	_R^{-1}V_\phi a}{L^{\infty ,q}
		(\rr {3d}\times\rr{3d})} <\infty }.
\end{multline*}
\par

Put
\begin{gather*}
V_{0,a}(X,Y)=|(V_{\phi _0}a)(x,\xi,\eta, \zeta, y, z)|,
\quad
V_{a}(X,Y)=|(V_{\phi}a)(x,\xi,\eta, \zeta, y, z)|
\\[1ex]
\text{and}\quad
G(x,\xi,\eta, \zeta, y, z)=|(V_\phi {\phi _0})(x,\xi,\eta, \zeta, y, z)|,
\end{gather*}
where $X=(x,\xi, \eta) \in \mathbf{R}^{3d}$ and
$Y=(\zeta, y, z)  \in \mathbf{R}^{3d}$. By Proposition \ref{Prop:STFTGelfand2}
we have
\begin{equation}\label{GEst1}
0\le G(x,\xi,\eta, \zeta, y, z) \lesssim
e^{-R(|x|^{\frac 1{s_1}}+|\xi|^{\frac 1{\sigma_2}}+|\eta|^{\frac 1{\sigma_3}}
+|\zeta|^{\frac 1{\sigma_1}}+|y|^{\frac 1{s_2}}+|z|^{\frac 1{s_3}})},
\end{equation}
$x,\xi,\eta, \zeta, y, z \in \rr d$, for every $R>0 .$

\par

From \cite[Lemma 11.3.3]{Gro} (when extended to the duality between Gelfand-Shilov spaces and their dual spaces of distributions)
it follows that
$V_a \lesssim V_{0,a} \ast G$, so we obtain
\begin{equation}\label{pippo}
(\omega_R^{-1} \cdot V_a)(X,Y)
\lesssim
\left((\omega^{-1}_{cR} \cdot V_{0,a})*G_1\right)(X,Y),
, \;\;\;
X,Y \in \mathbf{R}^{3d},
\end{equation}
for some $G_1$ which satisfies \eqref{GEst1},
and for a constant $c>0$ independent of $R$.
By applying the $L^\infty$-norm on the both sides of \eqref{pippo} we obtain
\begin{multline*}
\nm {\omega _R^{-1}V_a}{L^{\infty}(\rr {6d})} =
\sup _Y \sup _X \abs{\omega_R^{-1}V_a(X,Y)}
\\[1ex]
 \lesssim \sup _Y \sup _X \abs{\omega_{cR}^{-1}V_{0,a}*G_{1}(X,Y)}
\\[1ex]
\lesssim \sup_{Y}\left(\iint
\big ( \sup _X(\omega^{-1}_{cR} \cdot V_{0,a})(X , Y-Y_1)\big )
G_1(X_1, Y_1)\,  dX_1 dY_1\right)
\\[1ex]
\le
\nm {\omega^{-1}_{cR} \cdot V_{0,a}}{L^{\infty ,q}}
\nm {G_1}{L^{1,q'}}
\asymp \nm {\omega^{-1}_{cR} \cdot V_{0,a}}{L^{\infty ,q}},
\end{multline*}
wherefrom
\begin{multline*}
\bigcap _{R>0}\sets {a\in
	(\Sigma _{s_1,\sigma_2, \sigma_3}^{\sigma_1,s_2,s_3})'
	(\rr {3d})}{\nm {\omega	_R^{-1}V_\phi a}{L^{\infty ,q}
		(\rr {3d}\times\rr{3d})} <\infty } \\[1ex]
\subset \bigcap_{R>0} M^\infty_{(1/\omega _R)} (\rr {3d}).
\end{multline*}

\par

For the opposite inclusion we put
$K_j=\omega^{-1}_{jcR} \cdot V_{0,a}$, $j=1,2$.
By \eqref{pippo} and Minkowski's inequality we have
\begin{multline*}
\| \omega^{-1}_{R} \cdot V_{a} \| _{L^{\infty,q}}^q
\lesssim
\| \left((
K_1*G\right)\| _{L^{\infty,q}}^q
\\[1ex]
\lesssim
\int \sup _X\left(\iint K _1 (X-X_1, Y-Y_1)G(X_1, Y_1)
\, dX_1 dY_1\right)^q\, dY
\\[1ex]
\lesssim  \int
 \left(\iint  \sup \left(K _2(\cdo , Y-Y_1)\right)G(X_1, Y_1) \right.
\\[1ex]
\left. \times e^{
-cR(|\zeta-\zeta_1|^{\frac 1{\sigma_1}}+|y-y_1|^{\frac 1{s_2}}+
	|z-z_1|^{\frac 1{s_3}})}
\, dX_1 dY_1 \right)^q dY
\\[1ex]
\lesssim \| K _2 \|_{L^\infty}^q
\int \left(\iint G(X_1, Y_1)
e^{-cR(|\zeta-\zeta_1|^{\frac 1{\sigma_1}}+|y-y_1|^{\frac 1{s_2}}+
	|z-z_1|^{\frac 1{s_3}})}\, dX_1 dY_1 \right)^q\, dY
\\[1ex]
\lesssim \|K _2\|_{L^\infty}^q\equiv\| \omega^{-1}_{2cR} \cdot V_{0,a} \|_{L^\infty}^q.
\end{multline*}
Finally, by interchanging the roles of $\phi$ and $\phi _0$ we get
$$
\| \omega^{-1}_{R} \cdot V_{0,a} \| _{L^{\infty,q}} \lesssim
\| \omega^{-1}_{2cR} \cdot V_{a} \|_{L^\infty},
$$
i.e.
\begin{multline*}
\bigcap_{R>0} M^\infty_{(1/\omega _R)}
(\rr {3d}) \\[1ex] 
\subset
\bigcap _{R>0}\sets {a\in
	(\Sigma _{s_1,\sigma_2, \sigma_3}^{\sigma_1,s_2,s_3})'
	(\rr {3d})}{\nm {\omega	_R^{-1}V_\phi a}{L^{\infty ,q}
		(\rr {3d}\times\rr{3d})} <\infty },
\end{multline*}
and the result follows.	
\end{proof}

\par

\par

We leave for the reader to write down Proposition \ref{Prop:GammaModIdent*} when the (Roumieu case) symbol class $\Gamma^{ \sigma_1,s_2, s_3}_{(\omega)}(\rr {3d}) $
is considered instead.

\par

In \cite[Theorem 4.1]{CaTo},
it is shown that if $A$ is  a $d\times d$-matrix  with real entries,
then the operator
$e^{i\scal{AD_\xi}{D_x}}$ is a homeomorphism between certain classes of symbols.
We proceed with an analogous result in the context of the symbol class $\Gamma _{(\omega)} ^{\sigma_1,s_2,s_3;0}(\rr {3d})$.

\par

\begin{thm}\label{thm:CalculiTransfbis}
Let  $s_j ,\sigma _j > 0$ be such that the conditions in \eqref{eq:conditions} hold, and
 let $r,t\in [0,1]$ be such that $r+t \leq 1.$

If $\omega \in \mascP _{s_1,\sigma_2,\sigma_3}^0(\rr {3d})$,
then $a\in \Gamma _{(\omega)} ^{\sigma_1,s_2,s_3}(\rr {3d})$
if and only if 
$$e^{-i\scal {rD_\xi+tD_\eta}{D_x}}
a\in \Gamma _{(\omega)} ^{\sigma_1,s_2,s_3}(\rr {3d}).	$$

If $\omega \in \mascP _{s_1,\sigma_2,\sigma_3}(\rr {3d})$ instead, and if, in addition to
\eqref{eq:conditions},  $(s_j,\sigma_j ) \neq (\frac 12,\frac 12)$, $j = 1,2,3$,
then $a\in \Gamma _{(\omega)} ^{\sigma_1,s_2,s_3;0}(\rr {3d})$
if and only if 
$$
e^{-i\scal {rD_\xi+tD_\eta}{D_x}} a\in \Gamma _{(\omega)} ^{\sigma_1,s_2,s_3;0}(\rr {3d}).
$$
\end{thm}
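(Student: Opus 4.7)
I will prove both statements using the STFT characterizations from Propositions \ref{Prop:CharGammaSTFT} and \ref{prop:GammaOmegaChar}, combined with an analysis of how $T_{r,t} := e^{-i\scal{rD_\xi+tD_\eta}{D_x}}$ acts on the STFT. The Roumieu and Beurling cases are treated in parallel, differing only by ``some $R$'' vs.\ ``every $R$'' quantification and the choice of window space ($\maclS$ vs.\ $\Sigma$). Fixing $\phi \in \maclS^{\sigma_1,s_2,s_3}_{s_1,\sigma_2,\sigma_3}(\rr{3d}) \setminus 0$ (respectively in $\Sigma^{\sigma_1,s_2,s_3}_{s_1,\sigma_2,\sigma_3}$ for the Beurling case), the problem reduces to showing that the estimate
\begin{equation*}
|V_\phi a(X,Y)| \lesssim \omega(X) e^{-R(|\zeta|^{1/\sigma_1}+|y|^{1/s_2}+|z|^{1/s_3})},
\end{equation*}
with $X = (x,\xi,\eta)$ and $Y = (\zeta,y,z)$, is preserved when $a$ is replaced by $b := T_{r,t} a$.

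The key step is an identity relating $V_\phi b$ to $V_\phi a$. Since $T_{r,t}$ is a Fourier multiplier in $X$ with unit-modulus symbol $m(W) = e^{-i\scal{ry+tz}{\zeta}}$ (where $W = (\zeta,y,z)$), a direct Fourier-side computation, in the spirit of the proof of Theorem \ref{Thm:CalculiTransf}, yields
\begin{equation*}
e^{i\scal{X}{Y}} V_\phi(T_{r,t} a)(X,Y) = T_{r,t}^{(X)}\bigl[e^{i\scal{\cdot}{Y}} V_\phi a(\cdot,Y)\bigr](X),
\end{equation*}
where $T_{r,t}^{(X)}$ denotes $T_{r,t}$ acting in the $X$-variable only. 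By the derivative form of the STFT characterization (item (2) of the cited propositions), the function $X \mapsto e^{i\scal{X}{Y}} V_\phi a(X,Y)$ enjoys Gevrey-H\"ormander derivative estimates in $X$ with indices $(\sigma_1, s_2, s_3)$ and weight $\omega(X) e^{-Rh(Y)}$, where $h(Y)$ denotes the exponent above. Using the decomposition $T_{r,t} = \mascF_{2,3} \circ U_{r,t} \circ \mascF_{2,3}^{-1}$ from the proof of Theorem \ref{Thm:CalculiTransf}---with $U_{r,t}$ a shift in the first coordinate by a linear combination of the Fourier-dual variables of $\xi$ and $\eta$---these estimates are propagated through $T_{r,t}^{(X)}$, producing $|V_\phi(T_{r,t}a)(X,Y)| \lesssim \omega(X) e^{-R'h(Y)}$ for some $R' > 0$. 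Hence $b \in \Gamma^{\sigma_1,s_2,s_3}_{(\omega)}$, and the Beurling variant follows by running the argument at each scale $R > 0$.

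The converse direction follows by applying the same argument to $T_{r,t}^{-1} = e^{i\scal{rD_\xi+tD_\eta}{D_x}}$, which has the same structural properties and is handled analogously via Theorem \ref{Thm:CalculiTransf}. The main technical obstacle lies in the transfer step: a naive Taylor expansion $T_{r,t} = \sum_k \tfrac{(-i)^k}{k!}(\scal{rD_\xi + tD_\eta}{D_x})^k$ would, when combined with the Gevrey-H\"ormander estimates on $V_\phi a$, require the restrictive conditions $\sigma_1 + s_2 \le 1$ and $\sigma_1 + s_3 \le 1$ for absolute convergence---conditions not implied by \eqref{eq:conditions}. The Fourier-side translation representation of $T_{r,t}$ bypasses this by treating the operator as a bounded (essentially unitary) transformation rather than as a formal power series, thereby keeping the estimates uniform across the full admissible range of Gevrey indices.
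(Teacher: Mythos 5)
There is a genuine gap. Your starting identity is correct: writing $H_g(X,Y)=e^{i\scal XY}V_\phi g(X,Y)=\bigl(g*\psi_Y\bigr)(X)$ with $\psi_Y(Z)=\overline{\phi(-Z)}e^{i\scal ZY}$, the relation $H_{T_{r,t}a}(X,Y)=T_{r,t}^{(X)}H_a(X,Y)$ follows because $T_{r,t}$ is a Fourier multiplier and hence commutes with convolution in $X$. The problem is the next step. You need to show that $T_{r,t}^{(X)}$ maps a function satisfying Gevrey--H\"ormander bounds $|\partial_X^{\bs\alpha}G_Y(X)|\lesssim h^{|\bs\alpha|}\alpha_1!^{\sigma_1}\alpha_2!^{s_2}\alpha_3!^{s_3}\,\omega(X)e^{-Rh(Y)}$ to a function bounded by $\omega(X)e^{-R'h(Y)}$ --- but this is precisely the assertion of the theorem itself (now applied to $G_Y$ in place of $a$), so the reduction is circular unless you supply an independent mechanism. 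The mechanism you invoke, the factorization $T_{r,t}=\mascF_{2,3}\circ U_{r,t}\circ\mascF_{2,3}^{-1}$ from Theorem \ref{Thm:CalculiTransf}, does not apply: that theorem acts on Gelfand--Shilov functions (or their duals), where decay and regularity are exchanged by $\mascF_{2,3}$, whereas $G_Y(\cdot)$ grows like $\omega(X)$ in $X$, so $\mascF_{2,3}^{-1}G_Y$ is only a distribution, and after the shear $U_{r,t}$ no pointwise estimate of the required form can be read off. Your closing remark that $T_{r,t}$ is ``essentially unitary'' does not help either, since $G_Y$ lies in no space on which that unitarity gives pointwise weighted bounds. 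You correctly rule out the Taylor-expansion route, but the replacement you offer is not an argument.

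The paper resolves exactly this difficulty by transforming the window as well: with $\phi_{r,t}=e^{-i\scal{rD_\xi+tD_\eta}{D_x}}\phi$, which stays in $\Sigma_{s_1,\sigma_2,\sigma_3}^{\sigma_1,s_2,s_3}$ (this is the legitimate use of Theorem \ref{Thm:CalculiTransf}, since $\phi$ does decay), one has the exact covariance
\begin{equation*}
|(V_{\phi_{r,t}}(e^{-i\scal{rD_\xi+tD_\eta}{D_x}}a))(x,\xi,\eta,\zeta,y,z)|
=|(V_\phi a)(x+ry+tz,\xi+r\zeta,\eta+t\zeta,\zeta,y,z)|,
\end{equation*}
so the action of the operator on the symbol becomes a mere linear change of variables in the STFT, which is absorbed by the moderateness of $\omega$ ($\omega_{0,0;R+c}\lesssim\omega_{r,t;R}\lesssim\omega_{0,0;R-c}$); the conclusion then follows from the modulation-space characterization of Proposition \ref{Prop:GammaModIdent*}. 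If you push your own computation one step further --- note that $(T_{r,t}a)*\psi_Y=a*(T_{r,t}\psi_Y)$ and that $T_{r,t}\psi_Y$ is, up to a phase, a time-frequency shift of $T_{r,t}\bigl(\overline{\phi(-\cdot)}\bigr)$ --- you recover precisely this window-change identity; that, rather than propagating estimates through $T_{r,t}^{(X)}$, is the step your proof is missing.
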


\par

\begin{proof} We give the proof for the Beurling  case, and the Roumieu case is left for the reader.

We will use the result of Proposition \ref{Prop:GammaModIdent*}.
Therefore we fix a window function  $\phi \in \Sigma _{s_1,\sigma_2,\sigma_3}^{\sigma_1, s_2,s_3}(\rr {3d})$ and
let $\phi _{r,t} =e^{-i\scal {rD_\xi+tD_\eta}{D_x}}\phi$. Then,
in view of Theorem \ref{Thm:CalculiTransf} (2),
$\phi _{r,t}$ belongs to $\Sigma _{s_1,\sigma_2,\sigma_3}^{\sigma_1, s_2,s_3} (\rr {3d})$.

\par

By similar arguments as in the proof of Lemma \ref{lem:FTInvOpr}, we get
\begin{multline}\label{eq:ModSTFTequa}
|(V_{\phi _{r,t}} (e^{-i\scal {rD_\xi+tD_\eta}{D_x}}a))(x,\xi ,\eta,\zeta ,y,z)|
\\
=
|(V_\phi a)(x+ry+tz,\xi +r\zeta,\eta+t\zeta ,\zeta ,y,z)|,
\end{multline}
$ x,\xi ,\eta,\zeta ,y,z \in \rr d$.
Then using \eqref{eq:ModSTFTequa} and  a change of variables argument,
we get
$$
\nm {\omega _{0,0;R}^{-1}V_{\phi} a}{L^{p,q}} =
\nm {\omega _{r,t;R}^{-1}V_{\phi _{r,t}}
(e^{-i\scal {rD_\xi+tD_\eta}{D_x}}a)}{L^{p,q}},
$$
where
$$
\omega _{r,t;R}(x,\xi ,\eta,\zeta ,y,z) = \omega (x+ry+tz,\xi +r\zeta,\eta+t\zeta )
e^{-R(|y|^{\frac 1{s_2}}+|z|^{\frac 1{s_3}}+|\zeta |^{\frac 1{\sigma_1}})},
$$
and $p,q \in [1,\infty]$.

Hence Proposition \ref{Prop:GammaModIdent*}, and the fact that there exists a constant $c>0$ such that
$$
\omega _{0,0;R+c}\lesssim \omega _{r,t;R}\lesssim \omega _{0,0;R-c},
$$
 give
\begin{multline*}
a\in\Gamma _{(\omega)} ^{\sigma_1,s_2,s_3;0}(\rr {3d})
\quad \Leftrightarrow \quad
\nm {\omega _{0,0;R}^{-1}V_\phi a}{L^\infty}<\infty \;
\text{for every}
\ R>0
\\[1ex]
\Leftrightarrow \quad
\nm {\omega _{r,t;R}^{-1}V_{\phi _{r,t}}
(e^{-i\scal {rD_\xi+tD_\eta}{D_x}}a)}{L^\infty} <\infty \;
\text{for every} \ R>0
\\[1ex]
\Leftrightarrow \;
\nm {\omega _{0,0;R}^{-1}V_{\phi _{r,t}}
(e^{-i\scal {rD_\xi+tD_\eta}{D_x}}a)}{L^\infty}
<\infty \;
\text{for every} \ R>0
\\[1ex]
\Leftrightarrow \quad
e^{-i\scal {rD_\xi+tD_\eta}{D_x}}a \in
\Gamma _{(\omega)} ^{\sigma_1,s_2,s_3;0} (\rr {3d}),
\end{multline*}
and the result follows.

\end{proof}

\section{Continuity of bilinear pseudo-differential operators
with symbols of Gevrey-regularity and infinite order} \label{sec3}

We first discuss the continuity of bilinear operators in
$\op (\Gamma  ^{\sigma_1,s_2,s_3}_{(\omega)})$
and $\op (\Gamma ^{\sigma_1,s_2,s_3;0}_{(\omega)})$
when acting on products modulation spaces.
In particular, Theorem \ref{thm:p3.2}  can be considered as
an extension of \cite[Theorem 3.2]{To14} to
bilinear operators and a more general class of weights.

\par
%

\begin{thm}\label{thm:p3.2}
Let $s_j,\sigma _j>0$ be such that the conditions in \eqref{eq:conditions} hold.
Also, let
$v _1\in \mascP _{s_1} ^0(\rr d)$,
$v _j\in\mascP _{\sigma_j} ^0(\rr {d})$, $j=2,3$,
$\omega _0,\omega \in\mascP _{s_1,\sigma_2,\sigma_3}^0(\rr {3d})$,
and let $\omega _0$ be $\otimes _{j=1}^3 v_j$-moderate.
Furthermore, let $r,t\in [0,1]$ such that $r+t\leq 1$, and let
$p,q \in [1,\infty]$.
If  $a\in \Gamma _{(\omega _0)}^{\sigma_1,s_2,s_3}(\rr {3d})$,
then there exists $R>0$ such that
$\op _{r,t}(a)$ is continuous from
$M_{(\omega _0\omega)}^{p,q} (\rr {d}) \times M _{(1/\omega_R)}^{\infty,\infty}  (\rr {d}) $
to $M _{(\omega)}^{p,q}  (\rr {d}) $,
where
$$
\omega _R(x,\xi,\eta)=e^{-R(|x|^{\frac 1{s_1}} +|\xi|^{\frac 1{\sigma_2}}+|\eta|^{\frac 1{\sigma_3}})},
\;\;\; x,\xi,\eta \in \rr d.
$$
\end{thm}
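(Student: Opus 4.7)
The plan is to reduce the bilinear continuity to a linear one by freezing $g$, then invoke the linear Kohn--Nirenberg continuity of Proposition~\ref{thm:anisCntp}. First, by Corollary~\ref{Somega} (combined with Theorem~\ref{thm:CalculiTransfbis} and Proposition~\ref{Prop:CalculiTransfer}), the symbol class $\Gamma^{\sigma_1,s_2,s_3}_{(\omega_0)}(\rr{3d})$ is preserved under the transformations $e^{-i\scal{rD_\xi+tD_\eta}{D_x}}$, so that it suffices to prove the theorem in the Kohn--Nirenberg case $r=t=0$. In this case,
$$
\op_{0,0}(a)(f,g)(x)=(2\pi)^{-d}\iint e^{i\scal{x}{\xi+\eta}}a(x,\xi,\eta)\widehat f(\xi)\widehat g(\eta)\,d\xi\,d\eta,
$$
and, for fixed $g$, I introduce the effective linear symbol
$$
b_g(x,\xi)=(2\pi)^{-d/2}\int e^{i\scal{x}{\eta}}a(x,\xi,\eta)\widehat g(\eta)\,d\eta,
$$
so that $\op_{0,0}(a)(\,\cdot\,,g)=\op_0(b_g)$ acts linearly in $f$.

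The core step is to show that $b_g\in\Gamma^{\sigma_1,s_2}_{(\Omega)}(\rr{2d})$, with $\Omega(x,\xi)\asymp\omega_0(x,\xi,0)$, and with symbol seminorm bounded by $\|g\|_{M^{\infty,\infty}_{(1/\omega_R)}}$ (up to constants) provided $R$ is chosen large enough. Differentiating under the integral sign and distributing $\partial_x^\alpha$ via Leibniz between $e^{i\scal{x}{\eta}}$ and $a$ reduces the task to estimating integrals of the form
$\int|\eta|^{|\alpha'|}\,|\partial_x^{\alpha''}\partial_\xi^\beta a(x,\xi,\eta)|\,|\widehat g(\eta)|\,d\eta$, with $\alpha'+\alpha''=\alpha$. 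The factor with $a$ is controlled by the $\Gamma^{\sigma_1,s_2,s_3;h}_{(\omega_0)}$-seminorm of $a$ (no $\eta$-differentiation is needed here, hence no $s_3$-Gevrey factors enter). The monomial $|\eta|^{|\alpha'|}$ is absorbed into a sub-exponential by means of the elementary inequality $t^{|\alpha'|}\lesssim{\alpha'}!^{s_1}h_1^{|\alpha'|}e^{c|\eta|^{1/s_1}}$; the sub-exponential decay of $\widehat g$ extracted, via Fourier duality on modulation spaces, from $g\in M^{\infty,\infty}_{(1/\omega_R)}$ then dominates the combined $\eta$-growth of $\omega_0$ and of the absorbed exponential as soon as $R$ exceeds all accumulated rates. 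Summing the Leibniz terms yields the required bound
$$
|\partial_x^\alpha\partial_\xi^\beta b_g(x,\xi)|\lesssim h^{|\alpha+\beta|}\alpha!^{\sigma_1}\beta!^{s_2}\Omega(x,\xi)\,\|g\|_{M^{\infty,\infty}_{(1/\omega_R)}}.
$$

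With $b_g\in\Gamma^{\sigma_1,s_2}_{(\Omega)}$ in hand, Proposition~\ref{thm:anisCntp} supplies the continuity of $\op_0(b_g):M^{p,q}_{(\Omega\omega)}\to M^{p,q}_{(\omega)}$, with operator norm controlled by the $\Gamma^{\sigma_1,s_2;h}_{(\Omega)}$-seminorm of $b_g$. Since $\omega_0\in\mascP^0_{s_1,\sigma_2,\sigma_3}$, the slice $\Omega(x,\xi)=\omega_0(x,\xi,0)$ is equivalent to $\omega_0(x,\xi,\eta)$ up to sub-exponential factors in $\eta$ already absorbed at the previous step, so $\Omega\omega\asymp\omega_0\omega$, and the claimed bilinear continuity follows. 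The principal obstacle is the Gevrey bookkeeping in the middle step: the decay of $\widehat g$ must simultaneously absorb the growth of $\omega_0$ in $\eta$ and the polynomial factor $|\eta|^{|\alpha'|}$ without inflating the Gevrey indices $\sigma_1,s_2$ of the effective linear symbol; the admissibility conditions in~\eqref{eq:conditions} are precisely what makes the recombination ${\alpha'}!^{s_1}\cdot\alpha''!^{\sigma_1}\le \alpha!^{\sigma_1}$ (up to a harmless $2^{|\alpha|}$) work, after which the hypotheses on $\omega_0$, $v_j$ and $\omega_R$ fit together to deliver the final estimate.
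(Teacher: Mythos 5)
Your overall route is exactly the paper's: reduce to $r=t=0$ via the invariance result (Theorem \ref{thm:CalculiTransfbis}), freeze $g$ to obtain the effective linear symbol $a_g(x,\xi)=(2\pi)^{-d/2}\int e^{i\scal x\eta}a(x,\xi,\eta)\widehat g(\eta)\,d\eta$, show that $a_g\in\Gamma^{\sigma_1,s_2}_{(\omega)}(\rr{2d})$ with $\omega(x,\xi)=\omega_0(x,\xi,0)$ (the paper's Lemma \ref{lemma:a_gClass}), and conclude by Proposition \ref{thm:anisCntp} together with the STFT characterization of Proposition \ref{Prop:STFTGelfand2}. Your explicit tracking of the bound by $\|g\|_{M^{\infty,\infty}_{(1/\omega_R)}}$ is in fact a welcome sharpening of the paper's somewhat terse conclusion.

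However, the Gevrey bookkeeping in your key step --- which you yourself flag as the principal obstacle --- is carried out with the wrong exponent, and as written the chain of inequalities does not close. The decay available for $\widehat g$ (from $g\in M^{\infty,\infty}_{(1/\omega_R)}$, equivalently $g\in\maclS_{s_1}^{\sigma_1}(\rr d)$ for some $R$) is of the form $e^{-r|\eta|^{1/\sigma_1}}$, so the monomial $|\eta|^{|\alpha'|}$ must be absorbed via $|\eta|^{|\alpha'|}\lesssim \alpha'!^{\sigma_1}h^{|\alpha'|}e^{c|\eta|^{1/\sigma_1}}$, which produces the factorial $\alpha'!^{\sigma_1}$ and makes the recombination $\alpha'!^{\sigma_1}\,\alpha''!^{\sigma_1}\le\alpha!^{\sigma_1}$ trivial. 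You instead use $|\eta|^{|\alpha'|}\lesssim\alpha'!^{s_1}h_1^{|\alpha'|}e^{c|\eta|^{1/s_1}}$: for $e^{c|\eta|^{1/s_1}}$ to be dominated by $e^{-r|\eta|^{1/\sigma_1}}$ you would need $\sigma_1\le s_1$, while your claimed recombination $\alpha'!^{s_1}\,\alpha''!^{\sigma_1}\le\alpha!^{\sigma_1}$ needs $s_1\le\sigma_1$; neither is implied by \eqref{eq:conditions} (which only relates $s_2,s_3$ to $s_1$ and $\sigma_1$ to $\sigma_2,\sigma_3$), and the two requirements are incompatible unless $s_1=\sigma_1$. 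Replacing $s_1$ by $\sigma_1$ in the absorption inequality repairs the step and recovers precisely the computation in the paper's Lemma \ref{lemma:a_gClass}; note also that \eqref{eq:conditions} enters not through this recombination but through $\sigma_1\le\sigma_3$, which guarantees that the $e^{r_0|\eta|^{1/\sigma_3}}$ growth of $\omega_0$ in $\eta$ is killed by the $e^{-r|\eta|^{1/\sigma_1}}$ decay of $\widehat g$.
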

\par

\par

\begin{rem}\label{rem:BiltoLin}
We will use estimates similar to those obtained in the  proof of \cite[Theorem 6.1]{BeMaNaTo}. We observe that out arguments are anyway different since, in view of the fact that we employ Gevrey tpye symbols, we cannot rely on standard localization techniques.
The idea is that for a fixed function $g$ in appropriate space of test functions, $a\in \Gamma _{(\omega )}^{\sigma_1,s_2,s_3}(\rr {3d})$, and $r=t=0$, the operator
$\op_{0,0}(a)(\cdo,g)\equiv T_a(\cdo, g)$
can be regarded as a linear pseudo-differential operator
(with symbol depending on $g$), that is,
\[
\op_{0,0}(a)(f,g)(x)=(2\pi)^{-\frac{d}{2}}
\int e^{i \scal x\xi} a_g(x,\xi)\widehat{f}(\xi)\,d\xi,
\]
where
\begin{equation}\label{eq:a_gDef}
a_g(x,\xi)=(2\pi)^{-\frac{d}{2}}
\int e^{i \scal x\eta} a(x,\xi,\eta)\widehat{g}(\eta)\,d\eta.
\end{equation}
If the symbol $a_g$ belongs to $\Gamma ^{\sigma_1,s_2}_{(\omega_0)} ( \rr {2d})$, then
the continuity  of $\op_{0,0}(a)(\cdo,g)$ from
$M^{p.q}_{(\omega_0\omega)}$ to $M^{p.q}_{(\omega)}$  follows  by
Proposition \ref{thm:anisCntp}.
\end{rem}

\par

\begin{lemma}\label{lemma:a_gClass}
Let $s_j,\sigma _j>0$ be such that the conditions in \eqref{eq:conditions} hold.
Also, let
$v _1\in \mascP _{s_1} ^0(\rr d)$,
$v _j\in\mascP _{\sigma_j} ^0(\rr {d})$, $j=2,3$,
$\omega _0,\omega \in\mascP _{s_1,\sigma_2,\sigma_3}^0(\rr {3d})$,
and let $\omega _0$ be $\otimes _{j=1}^3 v_j$-moderate.

If $g\in \maclS _{s_1}^{\sigma_1}(\rr d)$
and $a\in \Gamma _{(\omega_0 )}^{\sigma_1,s_2,s_3}(\rr {3d})$,
then the symbol $a_g$ given by \eqref{eq:a_gDef} belongs
to $\Gamma _{(\omega )}^{\sigma_1,s_2}(\rr {2d})$, where
$\omega(x,\xi)
\equiv \omega _0(x,\xi,0) \in\mascP _{s_1,\sigma_2}^0(\rr {2d})$.	
\end{lemma}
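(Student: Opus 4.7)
The plan is to differentiate $a_g$ under the integral in \eqref{eq:a_gDef}, use the $\otimes_{j=1}^3 v_j$-moderation of $\omega_0$ to separate the $\eta$-dependence, and then exploit the Gelfand--Shilov decay of $\widehat g$ to absorb the polynomial factors $|\eta|^{|\gamma|}$ produced by the Leibniz rule together with the growth of $v_3$. As a preliminary step, I note that $\omega(x,\xi)=\omega_0(x,\xi,0)$ lies in $\mascP _{s_1,\sigma_2}^0(\rr{2d})$: this is immediate on specializing \eqref{eq:omegaEst} for $\omega_0$ to the choice of null third-slot shift.

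Applying the Leibniz rule to split $\partial_x^\alpha$ between $e^{i\scal{x}{\eta}}$ and $a(x,\xi,\eta)$ will give
\[
\partial_x^\alpha\partial_\xi^\beta a_g(x,\xi)
=(2\pi)^{-\frac d2}\sum_{\gamma\le\alpha}\binom{\alpha}{\gamma}\int(i\eta)^\gamma e^{i\scal{x}{\eta}}(\partial_x^{\alpha-\gamma}\partial_\xi^\beta a)(x,\xi,\eta)\widehat g(\eta)\,d\eta.
\]
The defining bound for $a\in \Gamma _{(\omega_0)}^{\sigma_1,s_2,s_3}(\rr{3d})$, combined with $\omega_0(x,\xi,\eta)\lesssim \omega(x,\xi)\,v_3(\eta)$, will factor $\omega(x,\xi)$ out of the integrand and reduce the analysis to controlling the scalar quantity $J(\gamma):=\int|\eta|^{|\gamma|}|\widehat g(\eta)|v_3(\eta)\,d\eta$ independently of $x$ and $\xi$.

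This is where the conditions \eqref{eq:conditions} enter. From $g\in \maclS _{s_1}^{\sigma_1}(\rr{d})$ one obtains $|\widehat g(\eta)|\lesssim e^{-r_0|\eta|^{1/\sigma_1}}$ for some $r_0>0$ (Proposition \ref{prop:GScharac}), while $v_3\in \mascP _{\sigma_3}^0(\rr{d})$ yields $v_3(\eta)\lesssim e^{r|\eta|^{1/\sigma_3}}$ for arbitrarily small $r>0$. Since $1/\sigma_1\ge 1/\sigma_3$, the growth of $v_3$ is absorbed into the decay of $\widehat g$ to give $|\widehat g(\eta)|v_3(\eta)\lesssim e^{-r_1|\eta|^{1/\sigma_1}}$ for some $r_1>0$, and a standard one-variable estimate of the form $\sup_{t\ge 0}t^N e^{-ct^{1/\sigma_1}}\lesssim C^N N!^{\sigma_1}$ then delivers $J(\gamma)\lesssim C_1^{|\gamma|}\gamma!^{\sigma_1}$.

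Finally, everything will be combined using the elementary inequalities $\gamma!(\alpha-\gamma)!\le \alpha!$ and $\binom{\alpha}{\gamma}\le 2^{|\alpha|}$, which together imply $\binom{\alpha}{\gamma}(\alpha-\gamma)!^{\sigma_1}\gamma!^{\sigma_1}\le 2^{|\alpha|}\alpha!^{\sigma_1}$ for every $\sigma_1>0$. Summing over $\gamma\le\alpha$ will yield the target bound
\[
|\partial_x^\alpha\partial_\xi^\beta a_g(x,\xi)|\lesssim h_1^{|\alpha|+|\beta|}\alpha!^{\sigma_1}\beta!^{s_2}\omega(x,\xi),
\]
which is exactly the defining estimate of $\Gamma _{(\omega)}^{\sigma_1,s_2}(\rr{2d})$. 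The anticipated hard point is $J(\gamma)$: it is precisely there that the relation $\sigma_1\le\sigma_3$ and the $\otimes_{j=1}^3 v_j$-moderation of $\omega_0$ must cooperate in order to produce a Gevrey-$\sigma_1$ growth in $|\gamma|$, while differentiation under the integral sign is justified throughout by the super-exponential decay of $\widehat g$.
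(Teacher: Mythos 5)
Your proposal is correct and follows essentially the same route as the paper's proof: Leibniz rule on $\partial_x^\alpha$, separation of the $\eta$-dependence of the weight via $\omega_0(x,\xi,\eta)\lesssim\omega_0(x,\xi,0)e^{r|\eta|^{1/\sigma_3}}$ (you reach this through the $\otimes_{j=1}^3 v_j$-moderation and $v_3\in\mascP_{\sigma_3}^0$, the paper directly through $\omega_0\in\mascP^0_{s_1,\sigma_2,\sigma_3}$ — the same estimate either way), absorption of $\eta^\gamma$ and the weight growth into the Gevrey decay $e^{-r_0|\eta|^{1/\sigma_1}}$ of $\widehat g$ using $\sigma_1\le\sigma_3$, and the factorial inequality $\binom{\alpha}{\gamma}((\alpha-\gamma)!\gamma!)^{\sigma_1}\le 2^{|\alpha|}\alpha!^{\sigma_1}$. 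No gaps.
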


\par

\begin{proof}
By \eqref{eq:a_gDef} it follows that $a_g$ is a smooth function.
Indeed,
$$
(x,\xi,\eta)\mapsto
e^{i \scal x\eta} a(x,\xi,\eta)\widehat{g}(\eta)
$$
is a smooth mapping and
$$
\eta\mapsto
\eta^\gamma e^{i \scal x\eta} \partial _x^\alpha
\partial _\xi^\beta a(x,\xi,\eta)\widehat{g}(\eta)
$$
is an integrable function for every $x,\xi, \alpha,\beta$
and $\gamma$.

\par

Since $\widehat{g} \in \maclS  ^{s_1} _{\sigma_1}(\rr d)$ (cf. Proposition
\ref{propBroadGSSpaceChar}) and since
$\omega _0\in \mascP _{s_1,\sigma_2,\sigma_3}^0 (\rr {3d})$, it follows that
\begin{multline*}
\abs{\partial _x^\alpha\partial _\xi^\beta a_g(x,\xi)}
\lesssim
\sum _{\gamma \leq \alpha}\binom{\alpha}{\gamma}
\int \abs{\eta^{\gamma}\partial _x^{\alpha-\gamma}
\partial _\xi^\beta a(x,\xi,\eta)\widehat{g}(\eta)}\, d\eta
\\[1ex]
\lesssim
\sum _{\gamma \leq \alpha}\binom{\alpha}{\gamma}
\int
h^{|\alpha+\beta-\gamma|}(\alpha-\gamma)!^{\sigma_1} \beta!^{s_2}
\abs{\omega _0(x,\xi,\eta)
\eta^\gamma e^{-r|\eta|^{\frac 1{\sigma_1}}}}
\, d\eta
\\[1ex]
\lesssim
\sum _{\gamma \leq \alpha}\binom{\alpha}{\gamma}
\int
h^{|\alpha+\beta-\gamma|}(\alpha-\gamma)!^{\sigma_1} \beta!^{s_2}
\omega _0(x,\xi,0)\abs{e^{r_0|\eta|^{\frac 1{\sigma_3}}}
\eta^\gamma e^{-r|\eta|^{\frac 1{\sigma_1}}}}
\, d\eta,
\end{multline*}
for every $r_0>0$, and some constants $r,h>0$.
Since $r_0$ can be chosen such that $r_0<r$,
and since
$$
\abs{\eta^\gamma e^{-(r-r_0)|\eta|^{\frac 1{\sigma_1}}}}
\lesssim h^{|\gamma|}\gamma!^{\sigma_1}
e^{-\frac{(r-r_0)}2|\eta|^{\frac 1{\sigma_1}}},
$$
we get
\begin{multline*}
\abs{\partial _x^\alpha\partial _\xi^\beta a_g(x,\xi)}
\\[1ex]
\lesssim
h^{|\alpha+\beta|} \beta!^{s_2}
\sum _{\gamma \leq \alpha}\binom{\alpha}{\gamma}
\prn{(\alpha-\gamma)!\gamma!}^{\sigma_1} \int
\omega _0(x,\xi,0)
 e^{-(r-r_0)|\eta|^{\frac 1{\sigma_1}}}
\, d\eta,
\\[1ex]
\lesssim (4h)^{|\alpha+\beta|}\alpha!^{\sigma_1} \beta!^{s_2}
\omega(x,\xi), \;\;\; x,\xi \in \rr d,
\end{multline*}
for some constant $h>0$, where $\omega(x,\xi)
\equiv \omega _0(x,\xi,0)\in \mascP _{s_1,\sigma_2}^0(\rr {2d})$.
This gives the desired result.
\end{proof}

\par

\begin{proof}[Proof of Theorem \ref{thm:p3.2}]
In view of the invariance properties for the bilinear
pseudo-differential operators given in Theorem \ref{thm:CalculiTransfbis},
we may assume $r=t=0$ without loss of generality.

\par

By Proposition \ref{thm:anisCntp}
and Lemma \ref{lemma:a_gClass}, it follows that $\op(a)(f,g)$ is a continuous mapping from
$M^{p.q}_{(\omega_0\omega)} (\rr d) \times\maclS _{s_1} ^{\sigma_1} (\rr d)$
to $M^{p.q}_{(\omega)} (\rr d)$. Now the result follows from Proposition \ref{Prop:STFTGelfand2}.
\end{proof}

\par

In a similar way, we get the  result for the Beurling case. The details are left for the reader.

\par

\begin{thm}\label{thm:p3.2bis}
Let $s_j,\sigma _j>0$,   $(s_j,\sigma_j ) \neq (\frac 12,\frac 12)$, $j=1,2,3$,
be such that the conditions in \eqref{eq:conditions} hold.
Also, let
$v _1\in \mascP _{s_1} ^0(\rr d)$,
$v _j\in\mascP _{\sigma_j} ^0(\rr {d})$, $j=2,3$,
$\omega _0,\omega \in\mascP _{s_1,\sigma_2,\sigma_3}^0(\rr {3d})$,
and let $\omega _0$ be $\otimes _{j=1}^3 v_j$-moderate.
Furthermore, let $r,t\in [0,1]$ such that $r+t\leq 1$, and let
$p,q \in [1,\infty]$.

If  $a\in \Gamma _{(\omega _0)}^{\sigma_1,s_2,s_3;0}(\rr {3d})$,
then for any $R>0$ the operator
$\op _{r,t}(a)$ is continuous from
$M_{(\omega _0\omega)}^{p,q} (\rr {d}) \times M _{(1/\omega_R)}^{\infty,\infty}  (\rr {d}) $
to $M _{(\omega)}^{p,q}  (\rr {d}) $,
where
$$
\omega _R(x,\xi,\eta)=e^{-R(|x|^{\frac 1{s_1}} +|\xi|^{\frac 1{\sigma_2}}+|\eta|^{\frac 1{\sigma_3}})},
\;\;\; x,\xi,\eta \in \rr d.
$$
\end{thm}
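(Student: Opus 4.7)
The plan is to transpose the Roumieu argument behind Theorem~\ref{thm:p3.2} to the Beurling setting. First, the Beurling part of Theorem~\ref{thm:CalculiTransfbis} combined with Proposition~\ref{Prop:CalculiTransfer} shows that $\Gamma_{(\omega_0)}^{\sigma_1,s_2,s_3;0}(\rr{3d})$ is invariant under $e^{-i\scal{rD_\xi+tD_\eta}{D_x}}$. This allows us to reduce to the case $r=t=0$, replacing $\op_{r,t}(a)$ by $T_{\tilde a} = \op_{0,0}(\tilde a)$ with $\tilde a$ in the same Beurling symbol class. Then, following Remark~\ref{rem:BiltoLin}, for each $g$ fixed in the second slot, $T_a(\,\cdot\,,g)$ is a linear pseudo-differential operator with symbol $a_g$ given by \eqref{eq:a_gDef}.

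The crucial step will be establishing the Beurling analogue of Lemma~\ref{lemma:a_gClass}: for every $R>0$ and every $g\in M^{\infty,\infty}_{(1/\omega_R)}(\rr d)$, one must show that $a_g\in \Gamma^{\sigma_1,s_2;0}_{(\omega)}(\rr{2d})$, where $\omega(x,\xi)=\omega_0(x,\xi,0)$, with seminorms controlled linearly by $\nm{g}{M^{\infty,\infty}_{(1/\omega_R)}}$. The hard part is that $g$ is no longer in a Gelfand-Shilov space, so $\widehat g$ has no pointwise decay; we only know, through the short-time Fourier transform, that $|V_\phi g| \lesssim \omega_R$. One expresses $\widehat g$ via the STFT inversion formula and differentiates $a_g$ under the integral, handling the resulting growth in $\eta$ by an integration-by-parts argument involving the derivatives of $a$.

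The key technical observation is the following. The Beurling hypothesis on $a$ provides, for \emph{every} $h>0$, the bound
\begin{equation*}
|\partial_x^\alpha \partial_\xi^\beta \partial_\eta^\gamma a(x,\xi,\eta)|
\lesssim h^{|\alpha+\beta+\gamma|}\,\alpha!^{\sigma_1}\beta!^{s_2}\gamma!^{s_3}\,\omega_0(x,\xi,\eta),
\end{equation*}
where the implicit constant may depend on $h$. Combining this flexibility with the elementary estimate $|\eta^\gamma|\,e^{-r|\eta|^{\frac{1}{\sigma_3}}} \lesssim h^{|\gamma|}\gamma!^{\sigma_3}\, e^{-\frac{r}{2}|\eta|^{\frac{1}{\sigma_3}}}$ used in the proof of Lemma~\ref{lemma:a_gClass}, and with the submultiplicativity of $\omega_0$ against the weights $v_j \in \mascP_{\sigma_j}^0$ (which lets us trade the $z$-growth coming from $V_\phi g$ for factors absorbed into $\omega_0$), one should obtain, for every $h>0$,
\begin{equation*}
|\partial_x^\alpha \partial_\xi^\beta a_g(x,\xi)|
\lesssim (Ch)^{|\alpha+\beta|}\,\alpha!^{\sigma_1}\beta!^{s_2}\,\omega(x,\xi),
\end{equation*}
which is precisely the Beurling condition for $a_g \in \Gamma^{\sigma_1,s_2;0}_{(\omega)}(\rr{2d})$. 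Once the symbol is in the Beurling class, Proposition~\ref{thm:anisCntp} delivers the continuity of $T_a(\,\cdot\,,g)$ from $M^{p,q}_{(\omega_0\omega)}(\rr d)$ into $M^{p,q}_{(\omega)}(\rr d)$ with operator norm linear in $\nm{g}{M^{\infty,\infty}_{(1/\omega_R)}}$, yielding the claimed bilinear continuity for every $R>0$.
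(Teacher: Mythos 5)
Your overall architecture is the one the paper intends (it leaves the Beurling details to the reader, referring back to the proof of Theorem \ref{thm:p3.2}): reduce to $r=t=0$ via Theorem \ref{thm:CalculiTransfbis} and Proposition \ref{Prop:CalculiTransfer}, freeze $g$, show that $a_g$ in \eqref{eq:a_gDef} lies in a good symbol class, and invoke Proposition \ref{thm:anisCntp}. However, your ``crucial step'' is false as stated: for fixed $R>0$ and $g\in M^{\infty,\infty}_{(1/\omega_R)}(\rr d)$, the symbol $a_g$ does \emph{not} in general belong to the Beurling class $\Gamma^{\sigma_1,s_2;0}_{(\omega)}(\rr{2d})$, even when $a$ is Beurling. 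Take $a\equiv 1$ (which lies in $\Gamma^{\sigma_1,s_2,s_3;0}_{(1)}$); then $a_g=g$ up to a constant, and a generic $g$ with $|V_\phi g|\lesssim e^{-R(\cdot)}$ for one fixed $R$ satisfies $|\partial^\alpha g|\lesssim h_0^{|\alpha|}\alpha!^{\sigma_1}$ only for \emph{some} $h_0$ (of size roughly $R^{-\sigma_1}$), not for every $h$. The mechanism is visible in your own sketch: in the Leibniz sum the Beurling hypothesis on $a$ only shrinks the base in front of $(\alpha-\gamma)!^{\sigma_1}$, while the factor $h^{|\gamma|}\gamma!^{\sigma_1}$ produced by $|\eta^\gamma|e^{-c|\eta|^{1/\sigma_1}}\lesssim h^{|\gamma|}\gamma!^{\sigma_1}e^{-\frac c2|\eta|^{1/\sigma_1}}$ has $h$ \emph{determined} by $c$ (hence by $R$), not free; the term $\gamma=\alpha$ receives no help from $a$ at all. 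The good news is that this error is harmless for the theorem: Proposition \ref{thm:anisCntp} only requires the Roumieu membership $a_g\in\Gamma^{\sigma_1,s_2}_{(\omega)}(\rr{2d})$, which is exactly what Lemma \ref{lemma:a_gClass} gives, with seminorms linear in the relevant norm of $g$. You should weaken your intermediate claim accordingly; the Beurling hypothesis on $a$ is what lets the argument run for \emph{every} $R>0$ rather than for some $R$, not what upgrades the class of $a_g$.

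A second, related misconception is your premise that $g\in M^{\infty,\infty}_{(1/\omega_R)}(\rr d)$ ``is no longer in a Gelfand-Shilov space, so $\widehat g$ has no pointwise decay.'' For the exponential weight $\omega_R$ in the statement, the condition $|V_\phi g|\lesssim \omega_R$ is precisely the characterization in Proposition \ref{Prop:STFTGelfand2}\,(1), so $M^{\infty,\infty}_{(1/\omega_R)}(\rr d)\subset \maclS_{s_1}^{\sigma_1}(\rr d)$ for each fixed $R>0$, and by Proposition \ref{prop:GScharac} one has the pointwise bound $|\widehat g(\eta)|\lesssim \nm{g}{M^{\infty,\infty}_{(1/\omega_R)}}e^{-cR|\eta|^{1/\sigma_1}}$. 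Hence the STFT-inversion and integration-by-parts detour you propose is unnecessary: Lemma \ref{lemma:a_gClass} applies verbatim, and the passage from $\maclS_{s_1}^{\sigma_1}$ back to the modulation space $M^{\infty,\infty}_{(1/\omega_R)}$ is exactly the final appeal to Proposition \ref{Prop:STFTGelfand2} in the paper's proof of Theorem \ref{thm:p3.2}. With these two corrections your argument coincides with the intended one.
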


Finally, the characterization of  Gelfand-Shilov spaces via modulation spaces gives the following result (cf. \cite{GZ, Te6, To18}),
see also Proposition \ref{Prop:STFTGelfand2}.

\begin{thm}\label{thm:CntGSspaces}
Let there be given $s,\sigma >0$ such that $s+\sigma \ge 1$,
$v _1\in \mascP _s^0(\rr d)$, $v _j\in\mascP _\sigma^0(\rr {d})$, $j=2,3$,
and $\omega_0 \in\mascP _{s,\sigma,\sigma}^0(\rr {3d})$,
such that  $\omega_0 $ is $\otimes _{j=1}^3 v_j$-moderate.
If  $r,t\in [0,1]$, such that $r+t\leq 1$, and
$a\in \Gamma _{(\omega )}^{\sigma,s,s}(\rr {3d})$
then $\op _{r,t}(a)$ is continuous from
$\maclS _{s}^{\sigma}(\rr d)\times
\maclS _{s}^{\sigma}(\rr d)$
to $\maclS _{s}^{\sigma}(\rr d)$,
and from
$(\maclS _s^\sigma )'(\rr d)\times(\maclS _s^\sigma )'(\rr d)$
to $(\maclS _s^\sigma )'(\rr d)$.
\end{thm}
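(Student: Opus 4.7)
The plan is to combine the bilinear modulation space continuity of Theorem \ref{thm:p3.2} with the characterization of $\maclS_s^\sigma(\rr d)$ and $(\maclS_s^\sigma)'(\rr d)$ as, respectively, an inductive and a projective limit of weighted modulation spaces, as afforded by Proposition \ref{Prop:STFTGelfand2} and the references cited there (\cite{GZ, Te6, To18}).

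Specifically, fixing a window $\phi \in \maclS_s^\sigma(\rr d) \setminus 0$ and writing $\omega_r^\sharp(x,\xi) = e^{r(|x|^{1/s} + |\xi|^{1/\sigma})}$, Proposition \ref{Prop:STFTGelfand2} yields, for any $p, q \in [1, \infty]$,
\begin{equation*}
\maclS_s^\sigma(\rr d) = \bigcup_{r > 0} M^{p,q}_{(\omega_r^\sharp)}(\rr d),
\qquad
(\maclS_s^\sigma)'(\rr d) = \bigcap_{r > 0} M^{p,q}_{(1/\omega_r^\sharp)}(\rr d).
\end{equation*}
For the test function case, I would start with arbitrary $f, g \in \maclS_s^\sigma(\rr d)$, so that their short-time Fourier transforms satisfy $|V_\phi f(x,\xi)|, |V_\phi g(x,\xi)| \lesssim e^{-r_0(|x|^{1/s} + |\xi|^{1/\sigma})}$ for some $r_0 > 0$. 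Applying Theorem \ref{thm:p3.2} with target weight $\omega = \omega_{r_2}^\sharp$ for a sufficiently small $r_2 > 0$, chosen so that $\omega_0 \, \omega_{r_2}^\sharp$ is still dominated by $1/|V_\phi f|$ (which is possible since $\omega_0 \in \mascP^0_{s,\sigma,\sigma}(\rr{3d})$ is of at most exponential growth) and so that $g \in M^{\infty,\infty}_{(1/\omega_R)}(\rr d)$ for the corresponding $R$, one obtains $\op_{r,t}(a)(f,g) \in M^{p,q}_{(\omega_{r_2}^\sharp)}(\rr d) \subset \maclS_s^\sigma(\rr d)$, which is the required conclusion.

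For the distribution case, I would proceed by a transpositional argument: the formal adjoint of $\op_{r,t}(a)$, in view of Proposition \ref{Prop:CalculiTransfer} together with Theorem \ref{Thm:CalculiTransf}, is again a bilinear pseudo-differential operator whose symbol belongs to a related $\Gamma$-class, with the same Gevrey exponents and an equivalent weight. Applying the test-space continuity just established to the adjoint and dualizing then yields the continuity of $\op_{r,t}(a)$ from $(\maclS_s^\sigma)'(\rr d) \times (\maclS_s^\sigma)'(\rr d)$ to $(\maclS_s^\sigma)'(\rr d)$, by invoking the projective characterization of the dual displayed above.

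The main technical obstacle will be reconciling the specific constant $R$ produced by Theorem \ref{thm:p3.2}, which is dictated by the Gevrey constants of $a$ and by the exponential rate of $\omega_0$, with the STFT decay (respectively, sub-exponential growth) of $f$ and $g$. Here the hypothesis that $\omega_0$ is $\otimes_{j=1}^3 v_j$-moderate is essential: it bounds the moderation rate of $\omega_0$ and, through the freedom of choosing the indices $r_0, r_2, R$ in the inductive and projective limits, allows the various scales in the characterization to be traversed coherently so that the hypotheses of Theorem \ref{thm:p3.2} can always be satisfied.
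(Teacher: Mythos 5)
There is a genuine gap in the test-function half of your argument: the target weight $\omega =\omega ^\sharp _{r_2}$, with $\omega ^\sharp _{r_2}(x,\xi )=e^{r_2(|x|^{1/s}+|\xi |^{1/\sigma})}$, is not admissible in Theorem \ref{thm:p3.2}. That theorem requires $\omega \in \mascP _{s,\sigma ,\sigma}^0(\rr {3d})\subset \mascP _E(\rr {3d})$, i.e.\ the moderateness estimate \eqref{eq:omegaEst} must hold for \emph{every} $r>0$, and by \eqref{Eq:CondSubWeights} any such weight satisfies $\omega \lesssim e^{c|\cdot |}$. The weight $\omega ^\sharp _{r_2}$ satisfies \eqref{eq:omegaEst} only for $r\ge r_2$ (so it lies in $\mascP _{s,\sigma}$ but never in $\mascP _{s,\sigma}^0$), and when $s<1$ or $\sigma <1$ it is not moderate at all. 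Since every weight admitted by Theorem \ref{thm:p3.2} is moderate, each output space $M^{p,q}_{(\omega )}$ that the theorem can produce contains elements whose short-time Fourier transform decays only like $e^{-c|\cdot |}$, and for $s<1$ or $\sigma <1$ no such space embeds into $\maclS _s^\sigma (\rr d)$; hence the concluding step $\op _{r,t}(a)(f,g)\in M^{p,q}_{(\omega ^\sharp _{r_2})}\subset \maclS _s^\sigma (\rr d)$ cannot be reached by any admissible choice of parameters. The ingredient you are missing is the linear Gelfand-Shilov continuity result, Proposition \ref{Thm:theorem2}: the paper fixes $g$, passes to the linear symbol $a_g$ of Remark \ref{rem:BiltoLin}, shows $a_g\in \Gamma ^{\sigma ,s}_{(\omega )}(\rr {2d})$ by Lemma \ref{lemma:a_gClass}, and then invokes Proposition \ref{Thm:theorem2}, which directly asserts that $\op (a_g)$ is continuous on $\maclS _s^\sigma (\rr d)$ \emph{and} on $(\maclS _s^\sigma )'(\rr d)$; Proposition \ref{Prop:STFTGelfand2} and Theorem \ref{thm:p3.2} only serve to control the dependence on $g$.

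A secondary remark on the distributional part: dualizing a bilinear map in one slot yields at best continuity $(\maclS _s^\sigma )'(\rr d)\times \maclS _s^\sigma (\rr d)\to (\maclS _s^\sigma )'(\rr d)$; promoting the second argument to a distribution as well requires an additional argument (the paper is itself terse here, writing only ``by duality'', but its route through Proposition \ref{Thm:theorem2} at least provides the continuity in the first argument on $(\maclS _s^\sigma )'(\rr d)$ for free, whereas your scheme has to rebuild it from the modulation-space estimates, where the same weight-class obstruction reappears).
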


\par

\begin{proof}
In view of Theorem \ref{thm:CalculiTransfbis},
it is enough to consider the case when $r=t=0$, i.e.  $\op _{0,0}(a)$.

By Proposition \ref{Thm:theorem2} and Remark \ref{rem:BiltoLin},
it is enough to show that $a_g$ given by \eqref{eq:a_gDef} belongs to $ \Gamma _{(\omega)} ^{\sigma ,s}(\rr {2d})$
for $\omega(x,\xi) \equiv \omega _0(x,\xi,0)\in \mascP _{s,\sigma}^0(\rr {2d})$.
This follows from Lemma \ref{lemma:a_gClass}. Now the
the continuity of $\op _{r,t}(a)$ from
$\maclS _{s}^{\sigma}(\rr d)\times \maclS _{s}^{\sigma}(\rr d)$
to  $\maclS _{s}^{\sigma}(\rr d)$ follows from Theorem \ref{thm:p3.2} and Proposition \ref{Prop:STFTGelfand2}.

The continuity of $\op _{r,t}(a)$
from $(\maclS _s^\sigma )'(\rr d)\times(\maclS _s^\sigma )'(\rr d)$
to $(\maclS _s^\sigma )'(\rr d)$ follows by duality.
\end{proof}

\par

The analogous result given below follows by similar arguments and is left for the reader.

\par

\begin{thm}\label{thm:CntGSspacesbis}
Let there be given $s,\sigma >0$ such that $s+\sigma \ge 1$ and $ (s,\sigma) \neq (\frac{1}{2},\frac{1}{2}).$
Also, let $v _1\in \mascP _s^0(\rr d)$, $v _j\in\mascP _\sigma^0(\rr {d})$, $j=2,3$,
and $\omega_0 \in\mascP _{s,\sigma,\sigma}^0(\rr {3d})$,
such that  $\omega_0 $ is $\otimes _{j=1}^3 v_j$-moderate.
If  $r,t\in [0,1]$, such that $r+t\leq 1$, and
$a\in \Gamma _{(\omega )}^{\sigma,s,s;0}(\rr {3d})$
then $\op _{r,t}(a)$ is continuous from
$\Sigma _{s}^{\sigma}(\rr d)\times
\Sigma _{s}^{\sigma}(\rr d)$
to $\Sigma _{s}^{\sigma}(\rr d)$,
and from
$(\Sigma _s^\sigma )'(\rr d)\times( \Sigma _s^\sigma )'(\rr d)$
to $(\Sigma _s^\sigma )'(\rr d)$.
\end{thm}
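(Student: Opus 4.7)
The plan is to mimic the proof of Theorem \ref{thm:CntGSspaces}, replacing each ingredient by its Beurling counterpart. First, by Theorem \ref{thm:CalculiTransfbis} applied in the Beurling case (with $s_1=s$, $\sigma_2=\sigma_3=\sigma$), the symbol class $\Gamma _{(\omega )}^{\sigma,s,s;0}(\rr {3d})$ is invariant under the action of $e^{-i\scal{rD_\xi+tD_\eta}{D_x}}$, and combined with Proposition \ref{Prop:CalculiTransfer} this reduces the problem to the case $r=t=0$, so it suffices to treat $\op_{0,0}(a)=T_a$.

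For fixed $g\in \Sigma _{s}^{\sigma}(\rr d)$, I would freeze $g$ and view $T_a(\cdot,g)$ as a linear pseudo-differential operator with symbol $a_g$ given by \eqref{eq:a_gDef}, exactly as in Remark \ref{rem:BiltoLin}. The key technical step is then a Beurling analogue of Lemma \ref{lemma:a_gClass}: one must show that if $a\in \Gamma _{(\omega_0 )}^{\sigma,s,s;0}(\rr {3d})$ and $g\in \Sigma _{s}^{\sigma}(\rr d)$, then $a_g\in \Gamma _{(\omega )}^{\sigma,s;0}(\rr {2d})$ with $\omega(x,\xi)=\omega_0(x,\xi,0)$. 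The proof of Lemma \ref{lemma:a_gClass} carries through with only cosmetic changes: since now the bound
\[
|\partial_x^\alpha\partial_\xi^\beta\partial_\eta^\gamma a(x,\xi,\eta)|\lesssim h^{|\alpha+\beta+\gamma|}\alpha!^{\sigma}\beta!^{s}\gamma!^{s}\omega_0(x,\xi,\eta)
\]
holds for every $h>0$, and since Proposition \ref{Prop:STFTGelfand2}(2) gives $|\widehat g(\eta)|\lesssim e^{-r|\eta|^{1/\sigma}}$ for every $r>0$, we may choose $r$ much larger than the parameter coming from the moderateness estimate $\omega_0(x,\xi,\eta)\lesssim \omega_0(x,\xi,0)\,e^{r_0|\eta|^{1/\sigma}}$ (valid for every $r_0>0$ since $\omega_0\in\mascP _{s,\sigma,\sigma}^0$). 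Running through the same Leibniz--integral computation as in the proof of Lemma \ref{lemma:a_gClass} then yields the required bound for every $h>0$, which is exactly the Beurling condition $a_g\in \Gamma _{(\omega )}^{\sigma,s;0}(\rr {2d})$.

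Once $a_g$ lies in the right Beurling symbol class, Proposition \ref{Thm:theorem2} (applied to $\op_A(a_g)$ with $A=0$, using the hypothesis $(s,\sigma)\neq(\tfrac12,\tfrac12)$ which is precisely what makes $\Sigma_s^\sigma\neq\{0\}$) shows that $T_a(\cdot,g)$ is continuous on $\Sigma_s^\sigma(\rr d)$. Equivalently, one may invoke Theorem \ref{thm:p3.2bis} together with the characterization of $\Sigma_s^\sigma$ as $\bigcap_{p,q,\omega}M^{p,q}_{(\omega)}$ over the appropriate family of weights (cf. Proposition \ref{Prop:STFTGelfand2}(2) and its modulation-space reformulation), since $g\in\Sigma_s^\sigma\subset M^{\infty,\infty}_{(1/\omega_R)}$ for every $R>0$. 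Joint continuity in $(f,g)$ then follows from the bilinearity and the uniformity of the constants in $g$ as one varies the seminorm parameters.

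Finally, the continuity from $(\Sigma_s^\sigma)'(\rr d)\times (\Sigma_s^\sigma)'(\rr d)$ to $(\Sigma_s^\sigma)'(\rr d)$ is obtained by duality: one forms the transpose identities for $\op_{r,t}(a)$ (which by a straightforward computation amount to bilinear pseudo-differential operators with symbols in the same class, up to a permutation of the $(r,t)$ parameters covered by Proposition \ref{Prop:CalculiTransfer}) and transfers continuity through the sesquilinear pairing between $\Sigma_s^\sigma$ and its dual. The main obstacle is the Beurling version of Lemma \ref{lemma:a_gClass}, specifically keeping track that every estimate quantifier switches from ``for some $h,r>0$'' to ``for every $h,r>0$'' in a compatible way; once that lemma is secured, everything else is a routine transcription of the proof of Theorem \ref{thm:CntGSspaces}.
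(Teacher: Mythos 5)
Your proposal is correct and follows essentially the same route the paper intends: the paper explicitly leaves this Beurling case to the reader as a transcription of the proof of Theorem \ref{thm:CntGSspaces}, and you have correctly identified that the only substantive step is the Beurling analogue of Lemma \ref{lemma:a_gClass}, with the quantifiers switching to ``for every $h,r>0$'' exactly as you describe (using that $\widehat g$ decays like $e^{-r|\eta|^{1/\sigma}}$ for every $r>0$ to absorb the moderateness factor and to make the constant in $|\eta^\gamma e^{-c|\eta|^{1/\sigma}}|\lesssim h^{|\gamma|}\gamma!^{\sigma}e^{-c|\eta|^{1/\sigma}/2}$ arbitrarily small). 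The only caveat worth noting is that Proposition \ref{Thm:theorem2} is stated for $\maclS_s^\sigma$ rather than $\Sigma_s^\sigma$, but your alternative route via Theorem \ref{thm:p3.2bis} and the modulation-space characterization of $\Sigma_s^\sigma$ covers this.
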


\appendix
\section{Proofs of some auxiliary results} \label{appendix}

In this concluding section we collect the proofs of some technical results
employed above.

\begin{proof}[Proof of Lemma \ref{GSlemma}]

We give the proof for the case $ \Sigma _{s}^{\sigma}(\rr d)$ (the Beurling case)  only,
and the (simpler) case $ \maclS _{s}^{\sigma}(\rr d)$ (the Roumieu case) is left for the reader.
For simplicity (to avoid the use of inequalities related to multi-indices) we here set $d= 1$.

\par

Note that
$$
\sup_{x} |x^p f^{(q)} (x)| \leq C k^p l^{q} p!^s q!^{\sigma}, \;\;\; \forall k,l >0,
$$
is equivalent to
$$
\sup_{x} |x^p f^{(q)} (x)| \leq C h^{p + q} p!^s q!^{\sigma}, \;\;\; \forall h >0.
$$
In fact, if the former condition holds, then we may put $l=k=h$ for any given $h>0$,
so the later condition holds. If the later condition holds instead, then for any given $  k,l >0$ we may choose $h = \min\{ k,l\}$
to obtain
$$
\sup_{x} |x^p f^{(q)} (x)| \leq C \min\{ k,l\}^{p + q} p!^s q!^{\sigma} \leq  C k^p l^{q} p!^s q!^{\sigma},
$$
and the equivalence follows.

\par

Assume that  $f \in \Sigma _{s}^{\sigma}(\re)$, that is, for every $p,q \in \nn{}$,
\begin{equation} \label{prviuslov}
\sup_{x \in \re } | x^p f^{(q)} (x)| \lesssim k^p l^q p!^s q!^{\sigma}, \;\;\; \forall k,l >0.
\end{equation}
Put $F_q (x, l) = f^{(q)} (x) / (l^{q} q!^{\sigma}),$  $ x \in \re$.
Then, by \eqref{prviuslov}
$$
\sup_{x} k^{-p} p !^{-s}  | x|^{p} | F_q (x,l) | < \infty \;\;
\Rightarrow \;\;
\sup_{x} k^{-\frac{p}{s}} p !^{-1}  | x|^{\frac{p}{s}} | F_q (x,l) |^{\frac{1}{s}}  \leq C,
$$
for some constant $C>0,$ uniformly in $p,$ so that
$$
 \sup_{x}  \sum_{p \in \mathbb{N}} \left  ( \frac{(| x|k^{-1})^{1/s}}{2} \right )^{p}  p !^{-1}
 | F_q (x,l)  |^{1/s}   \leq C \sum_{p \in \mathbb{N}} \frac{1}{2^{p}}.
$$
For any given $h>0$ we may choose $ k = (s/ 2h )^s $ to obtain
\begin{multline*}
 \sup_{x}  \sum_{p \in \mathbb{N}}  ( \frac{\left ( \left(| x|\left (\frac{2h}{s}\right )^s \right )^{1/s} \right )^{p} }{2^{p} p !}
 | F_q (x,l)  |^{1/s}  \\
=
\sup_{x}  \sum_{p \in \mathbb{N}}   \frac{\left ( | x|^{1/s} \frac{h}{s}  \right )^p}{ p !}
 | F_q (x,l)  |^{1/s}
  \leq \tilde C.
\end{multline*}
Therefore
$$
e^{\frac{h}{s} |x|^{1/s}} \sup_{x} | F_q (x,l)  |^{1/s} \leq \tilde C \;\;\; \Rightarrow \;\;\;
| F_q (x,l)  |^{1/s} \leq \tilde C e^{-\frac{h}{s} |x|^{1/s}}, \;\;\; \forall h,l>0,
$$
and we conclude that
$$
\frac{|f^{(q)} (x)|}{l^{q} q!^{\sigma}} \leq \tilde{C} ^s e^{-h |x|^{1/s}}
\;\;\; \Leftrightarrow \;\;\;
|f^{(q)} (x)| \leq C l^{q} q!^{\sigma} e^{-h |x|^{1/s}}, \;\;\; \forall h,l >0,
$$
which proves the first part of Lemma.

\par

Now, assume that \eqref{GSspaces} holds, so that for $F_q (x,l) = f^{(q)} (x) / (l^{q} q!^{\sigma})$
we have
$$
|F_q (x, l)|^{1/s} e^{\frac{h}{s} |x|^{1/s}} \leq C,  \;\;\; \forall h,l>0,
$$
for some $C>0$ uniformly in $x\in \re$. Therefore
$$
\sup_{x} \sum_{p \in \mathbb{N}} \frac{1}{p !}
(\frac{h}{s})^p  |x|^{p/s} \left | F_q (x, l)  \right|^{1/s} < \infty,
$$
which implies
$$
\frac{1}{p !} \left (\frac{h}{s} \right )^p  |x|^{p/s} \left | F_q (x,l)  \right|^{1/s} < \infty
\;\;\;
\Leftrightarrow
\;\;\;
\frac{1}{p !^s} \left  (\frac{h}{s} \right )^{ps}  |x|^{p} \left | F_q (x, l)  \right| < \infty,
$$
that is
$$
|x^{p} f^{(q)} (x) | \leq C  \left  (\frac{s^s}{h^s} \right )^{p} l^{q} p !^s q!^{\sigma},
\;\;\; \forall h,l >0.
$$
Finally, for a given $k>0$ by choosing $h = s/ k^{1/s}$, we have
$ k = (s/h)^s $ and
$$
|x^{p} f^{(q)} (x) | \leq C   k^{p} l^{q} p !^s q!^{\sigma},
$$
which completes the proof.
\end{proof}

\begin{proof}[Proof of Proposition \ref{Prop:EquivWeights}]

It is enough to prove  (2) for every  $0<s_j < 1$, $j=1,\dots, k$.
In addition, we assume that $(s_0,\dots,s_k)\neq (1/2,\dots,1/2)$, and leave
the necessary modifications when $(s_0,\dots,s_k)= (1/2,\dots,1/2)$ for the reader.

Let $\phi_0 \in \Sigma _{1-s_0,\dots,1-s_k}^{s_0,\dots, s_k}(\rr{d_0 +\dots + d_k})\setminus 0$,
and $\phi =|\phi _0|^2$. Then $\phi \in _{1-s_0,\dots,1-s_k}^{s_0,\dots, s_k}(\rr{d_0 +\dots + d_k})\setminus 0$,
i.e.
\begin{multline*}
|\partial _{x} ^{\alpha_0}\partial _{\xi_1} ^{\alpha_1}
\dots \partial _{\xi_k} ^{\alpha_k} \phi (x, \xi_1,\dots,\xi_k)|
\lesssim
h^{|\alpha_0 +\alpha_1+\dots+\alpha_k|}\prod_{j=0} ^k \alpha_j !^{s_j}
\\[1ex]
\times
e^{-c(|x|^{\frac 1{1-s_0}}+|\xi_1|^{\frac 1{1-s_1}}+\dots +|\xi_k|^{\frac 1{1-s_k}})},
\end{multline*}
for every $h>0$ and $c>0$.

Now let $\omega _0=\omega *\phi$.
Then we have
\begin{multline*}
|\partial _{x} ^{\alpha_0}\partial _{\xi_1} ^{\alpha_1}
\dots \partial _{\xi_k} ^{\alpha_k}
\omega _0 (x, \xi_1,\dots,\xi_k)|
\\[1ex]
= | \int_{\rr{d_0 +\dots + d_k}} \omega (x -y, \xi_1 - \eta_1, \dots, \xi_k - \eta_k)
\\[1ex]
\times
(\partial _{x} ^{\alpha_0}\partial _{\xi_1} ^{\alpha_1}
\dots \partial _{\xi_k} ^{\alpha_k} \phi )
(y, \eta_1, \dots, \eta_k)\, dy d\eta_1 \dots d\eta_k  |
\end{multline*}
\begin{multline*}
\lesssim
h^{|\alpha_0 +\alpha_1+\dots+\alpha_k|}\prod_{j=0} ^k \alpha_j !^{s_j}
\times
 \iint\cdots\int \omega (x -y, \xi_1 - \eta_1, \dots, \xi_k - \eta_k)
\\[1ex]
\times
e^{-c(|y|^{\frac 1{1-s_0}}+|\eta_1|^{\frac 1{1-s_1}}+\dots +|\eta_k|^{\frac 1{1-s_k}})}
\, dy d\eta_1 \cdots d\eta_k
\end{multline*}
\begin{multline*}
\lesssim
h^{|\alpha_0 +\alpha_1+\dots+\alpha_k|}\prod_{j=0} ^k \alpha_j !^{s_j} \times\omega  (x, \xi_1,\dots,\xi_k)
\\[1ex]
\times
\iint\cdots\int
e^{-\frac{c}{2}(|y|^{\frac 1{1-s_0}}+|\eta_1|^{\frac 1{1-s_1}}+\dots +|\eta_k|^{\frac 1{1-s_k}})}
\, dy d\eta_1 \dots d\eta_k
\end{multline*}
$$
\asymp h^{|\alpha_0 +\alpha_1+\dots+\alpha_k|}\prod_{j=0} ^k \alpha_j !^{s_j} \times\omega  (x, \xi_1,\dots,\xi_k),
$$
where we used \eqref{eq:2Nextbis}, \eqref{eq:expEstonsubmultipicative},
and the  fact that for any $r,c>0$ and $s\in (0,1)$ it holds
$ \displaystyle e^{r|\cdot|} \lesssim e^{\frac{c}{2}|\cdot|^{1/s}}.$
This gives the first part of (2) and $ \omega _0 \lesssim \omega$, by choosing $\alpha_j = 0$, $ j =0,\dots,k$.

\par

Now, for  $ \omega _0 \gtrsim \omega $ we note that  \eqref{eq:2Nextbis} and \eqref{eq:expEstonsubmultipicative} imply
\begin{multline*}
\omega (x -y, \xi_1 - \eta_1, \dots, \xi_k - \eta_k) \gtrsim
\frac{\omega (x , \xi_1, \dots, \xi_k )}{v (-y, - \eta_1, \dots,  - \eta_k)}  \\[1ex]
\gtrsim \omega (x , \xi_1, \dots, \xi_k ) e^{-r(|y|+|\eta_1|+\dots +|\eta_k|)},
\end{multline*}
for some $r>0.$ Therefore,
\begin{multline*}
|\omega _0 (x, \xi_1,\dots,\xi_k)| = | \omega *\phi(x, \xi_1,\dots,\xi_k)|
\\[1ex]
\left | \iint\cdots\int
\omega (x -y, \xi_1 - \eta_1, \dots, \xi_k - \eta_k)
\phi (y, \eta_1, \dots, \eta_k)\, dy d\eta_1 \cdots d\eta_k  \right |
\\[1ex]
\gtrsim \omega (x , \xi_1, \dots, \xi_k ) \\[1ex]
\times
 \iint\cdots\int e^{-r(|y|^{\frac 1{1-s_0}}+|\eta_1|^{\frac 1{1-s_1}}+\dots +|\eta_k|^{\frac 1{1-s_k}})}
\phi (y, \eta_1, \dots, \eta_k)\, dy d\eta_1 \cdots d\eta_k
\\[1ex]
\asymp \omega  (x, \xi_1,\dots,\xi_k),
\end{multline*}
for some $r>0,$ so that $ \omega _0 \gtrsim \omega $, and (1) follows. This also gives the second part of (2).
\end{proof}

\begin{proof}[Proof of Proposition \ref{Prop:CalculiTransfer}]

It is no restriction to assume that $r_2=t_2=0$. Let $(r,t)=(r_1,t_1)$.
The proof is a straightforward application of the Fourier inversion formula,
see also \cite[Proposition 1.1]{To24}  and  \cite[Section 18.5]{Ho1} .

\par

The equality $\op _{r,t}(a)=\op (b)$ is the same as
$$
\mascF _{2,3} (b(x,\cdo,\cdot\cdot ))(y-x,z-x)
=
\mascF _{2,3} (a(x+r(y-x)+t(z-x),\cdo,\cdot\cdot ))(y-x,z-x) 	
$$
$$
\Leftrightarrow
$$
\begin{multline*}
\iint b(x,\xi_1,\eta_1 )e^{i(\scal y{\xi_1} + \scal z{\eta_1} )}\, d\xi_1 d\eta_1 
\\[1ex]
= \iint a(x+ry+tz,\xi_1,\eta_1 )e^{i(\scal y{\xi_1} + \scal z{\eta_1} )}\, d\xi_1 d\eta_1
\end{multline*}
$$
\Leftrightarrow
$$
$$ 
b(x,\xi,\eta ) = \frac{1}{(2\pi )^{2d}} \iiiint a(x+ry+tz,\xi_1,\eta_1 )
e^{i(\scal y{\xi_1-\xi} + \scal z{\eta_1-\eta} )}\, dydzd\xi_1 d\eta_1.
$$
Let $Y=(y,z)$, $Y_2=(y _2,z _2)$, $\Xi=(\xi,\eta)$
and $\Xi _1=(\xi _1,\eta _1)$.
Then by the Fourier inversion formula we get
\begin{multline*}
b(x,\xi,\eta )=
\\[1ex]
 (2\pi )^{-\frac{7d}{2}}\iint\!\!\!\iiint\widehat a(\zeta ,y_2,z_2 )
e^{i( \scal {x+ry+tz}{\zeta}+\scal{Y_2}{\Xi_1} +\scal Y{\Xi_1-\Xi}  )}
\,dz_2dy_2d\zeta   dYd\Xi_1
\\[1ex]
= (2\pi )^{-\frac{3d}{2}}
\iiint \widehat a(\zeta ,y_2,z_2 )
e^{i( \scal {x}{\zeta}-\scal{ry_2+tz_2}{\zeta}
	+\scal{y_2}{\xi}+\scal{z_2}{\eta})}
\,dz_2dy_2d\zeta
\\[1ex]
=	(2\pi )^{-\frac{3d}{2}}
\iiint e^{i(\scal{y_2}{\xi}+\scal{z_2}{\eta}+\scal x\zeta)}
\prn{e^{-i\scal{ry_2+tz_2}{\zeta}}\widehat{a}(\zeta ,y_2,z_2)}
\,dz_2dy_2d\zeta
\\[1ex]
=
e^{-i\scal{rD_\xi+tD_\eta}{D_x}}a(x,\xi,\eta ),
\end{multline*}
which gives the result.	
\end{proof}

\section*{Acknowledgements}
The authors are grateful to professor Joachim Toft for
valuable remarks and suggestions. N. Teofanov 
is partially supported by MPNTR Project No 174024.

\par


\end{document}